\newtheorem{thm}{Theorem}
\newtheorem{lem}[thm]{Lemma}
\newtheorem{prp}[thm]{Proposition}
\theoremstyle{definition}
\newtheorem{df}[thm]{Definition}
\newtheorem{exa}[thm]{Example}
\newtheorem{cor}[thm]{Corollary}
\newtheorem{rem}[thm]{Remark}
\theoremstyle{remark}
\numberwithin{equation}{section}
\numberwithin{thm}{section}
\newcommand{\obsol}[1]{}
\DeclareMathOperator{\im}{im}
\DeclareMathOperator{\supp}{supp}
\DeclareMathOperator{\st}{\;|\;}
\DeclareMathOperator{\map}{map}
\DeclareMathOperator{\hocolim}{hocolim}
\DeclareMathOperator{\colim}{colim}
\DeclareMathOperator*{\Lhocolim}{hocolim}
\DeclareMathOperator{\Ob}{Ob}
\DeclareMathOperator{\sd}{sd}
\DeclareMathOperator{\Ch}{Ch}
\DeclareMathOperator{\dom}{dom}
\DeclareMathOperator{\alt}{alt}
\DeclareMathOperator{\carr}{carr}
\DeclareMathOperator{\const}{const}
\DeclareMathOperator{\Id}{Id}
\def\Top{\mathbf{Top}}
\def\hTop{\mathbf{hTop}}
\def\dTop{\mathbf{dTop}}
\def\Set{\mathbf{Set}}
\def\R{\mathbb{R}}
\def\Z{\mathbb{Z}}
\def\bO{\mathbf{0}}
\def\bI{\mathbf{1}}
\def\ba{\mathbf{a}}
\def\bb{\mathbf{b}}
\def\bc{\mathbf{c}}
\def\be{\mathbf{e}}
\def\bm{\mathbf{m}}
\def\bn{\mathbf{n}}
\def\bx{\mathbf{x}}
\def\cC{\mathcal{C}}
\def\cE{\mathcal{E}}
\def\scU{\mathscr{U}}
\def\vN{\vec{N}}
\def\vP{\vec{P}}
\def\vI{\vec{I}}
\DeclareMathOperator\OConf{OConf}
\DeclareMathOperator\UConf{UConf}
\newcommand\georel[1]{\left| #1 \right|}
\DeclareFontFamily{U}{min}{}
\DeclareFontShape{U}{min}{m}{n}{<-> udmj30}{}
\newcommand{\oset}[3][0ex]{%
  \mathrel{\mathop{#3}\limits^{
    \vbox to#1{\kern-2\ex@
    \hbox{$\scriptstyle#2$}\vss}}}}
\def\dle{\mathrel{\oset{*}{<}}}
\def\xle{\mathrel{\oset{x}{<}}}
\def\yle{\mathrel{\oset{y}{<}}}
\def\sqsubsetneq{\mathrel{\sqsubseteq\kern-0.92em\raise-0.15em\hbox{\rotatebox{313}{\scalebox{1.1}[0.75]{\(\shortmid\)}}}\scalebox{0.3}[1]{\ }}}
\begin{document}

\title{Configuration spaces and directed paths on the final precubical set}

\author[J. Paliga]{Jakub Paliga}
\address{
Faculty of Mathematics, Informatics and Mechanics\\ University of Warsaw\\
Banacha 2\\
02-097 Warszawa, Poland}
\email{jp371350@students.mimuw.edu.pl}

\author[K. Ziemia\'nski]{Krzysztof Ziemia\'nski}
\address{
Faculty of Mathematics, Informatics and Mechanics\\ University of Warsaw\\
Banacha 2\\
02-097 Warszawa, Poland}
\email{ziemians@mimuw.edu.pl}

\begin{abstract}
The main goal of this paper is to prove that the space of directed loops on the final precubical set is homotopy equivalent to the ``total" configuration space of  points on the plane; by ``total" we mean that any finite number of points in a configuration is allowed.
We also provide several applications: we define new invariants of precubical sets, prove that directed path spaces on any precubical complex have the homotopy types of CW-complexes and construct certain presentations of configuration spaces of points on the plane as nerves of categories.
\end{abstract}

\subjclass[2020]{Primary 55P35, 68Q85; Secondary 55P15}

\keywords{Precubical set, directed path, configuration space}

\maketitle

\section{Introduction}

Directed Algebraic Topology (DAT) is a branch of mathematics that uses topological methods to solve problems arising in computer science, especially in concurrency \cite{FGHMR}. The main objects of interest of DAT are directed spaces (d-spaces) \cite{Grandis}. A directed space is a topological space $X$ with a distinguished family of paths $\vP(X)$, called directed paths or d-paths. Points of $X$ represent possible states of a concurrent program, while directed paths indicate how the states can evolve in time. The space of directed paths between two distinguished points of $X$, the initial point $\bO$ and the final point $\bI$, is the space of executions of a bi-pointed directed space $(X,\bO,\bI)$.

The main source of interesting examples of directed spaces are geometric realizations of precubical sets, also written $\square$--sets for short.
These objects are used in computer science for modeling concurrent automata;
for example Higher Dimensional Automata, introduced by Pratt \cite{Pratt}, are $\square$--sets enriched with some additional structure.
A $\square$--set $K$ is a sequence of sets $(K[n])_{n\geq 0}$ of $n$--cubes together with face maps, which determine how these cubes are glued.
We say that $K$ is bi-pointed if it is equipped with the initial and the final vertex $\bO,\bI\in K[0]$.
For a (bi-pointed) $\square$--set $K$ we associate its geometric realization $|K|$, which is a (bi-pointed) directed space.
The space $\vP(|K|)_\bO^\bI$ of directed paths from $\bO$ to $\bI$ will be called the execution space of $K$.
For short, we will usually skip the vertical bars and write $\vP(K)_\bO^\bI$ for $\vP(|K|)_\bO^\bI$.

Let $Z$ be a $\square$--set with exactly one cube in every dimension.
It is a final object both in the category of $\square$--sets and in the category of bi-pointed $\square$--sets
(the only vertex of $Z$ is then regarded as both the initial and the final vertex).
One could expect that the final object does not carry any nontrivial structure,
but this is not the case for $\square$--sets.
Let $\UConf(n,\R^2)$ denote the space of unordered configurations of $n$ points on the real plane.
The main result of this paper is
\begin{thm}\label{t:Main}
	There is a homotopy equivalence
	\[
		\vP(Z)_\bO^\bI \simeq \coprod_{n\geq 0} \UConf(n,\R^2).
	\]
\end{thm}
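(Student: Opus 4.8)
The plan is to replace $\vP(Z)_\bO^\bI$ by an explicit combinatorial model, decompose it into connected components indexed by $n\in\N$, and identify the $n$-th piece with $\UConf(n,\R^2)$ by recognising the latter as the classifying space $K(\beta_n,1)$ of the $n$-strand braid group $\beta_n$. First I would pass from the execution space to a category of cube chains: using the reduction of $\vP(K)_\bO^\bI$ first to natural $d$-paths and then to the cube-chain category $\Ch(K)_\bO^\bI$, one obtains $\vP(Z)_\bO^\bI\simeq|N\Ch(Z)_\bO^\bI|$. (Since $Z$ is maximally degenerate --- one cube in every dimension, so every face of every cube is self-identified --- this reduction must be justified in a generality beyond the ``proper'' case; if the literature does not cover it directly I would reprove it by hand, describing a natural $d$-path of length $n$ as a system of $n$ unit-speed tokens on the open interval that are created at $0$, destroyed at $1$, and carry no preferred order.) Because $Z$ has exactly one cube per dimension, an object of $\Ch(Z)_\bO^\bI$ is just a finite sequence $(n_1,\dots,n_k)$ of positive integers, i.e.\ an ordered partition of $n:=\sum_i n_i$; a generating morphism merges two consecutive entries $(\dots,p,q,\dots)\rightsquigarrow(\dots,p+q,\dots)$ together with a $(p,q)$-shuffle recording how the coordinate directions of the $p$-cube and of the $q$-cube are interleaved inside the $(p+q)$-cube, and composition is nested shuffling. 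The $\ell_1$-length $n$ is constant on $d$-homotopy classes and hence on connected components, and every chain admits a morphism to the one-term chain $(n)$; so $\pi_0\vP(Z)_\bO^\bI\cong\N$, and it remains to show that the full subcategory $\Ch(Z)_n$ spanned by the ordered partitions of a fixed $n$ has nerve homotopy equivalent to $\UConf(n,\R^2)$.

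Since $\UConf(n,\R^2)$ is a $K(\beta_n,1)$, it suffices to show that $|N\Ch(Z)_n|$ is aspherical with fundamental group $\beta_n$. For $\pi_1$ I would use as generating loops (based at $(1,\dots,1)$) the composites ``merge at position $i$ via one shuffle, then the inverse of the merge at $i$ via the other shuffle'', for $i=1,\dots,n-1$, read off the defining relations from the $2$-simplices of the nerve --- the two factorisations of a triple merge $(1,1,1)\rightsquigarrow(3)$ through $(2,1)$ and through $(1,2)$, and the commutation of merges at disjoint positions --- and match the resulting presentation with Artin's presentation of $\beta_n$; the first genuine check of the proof is that the shuffle labels turn the naive square relation at a triple merge into the braid relation $\sigma_i\sigma_{i+1}\sigma_i=\sigma_{i+1}\sigma_i\sigma_{i+1}$ and produce nothing more. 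For asphericity I would not argue with $\Ch(Z)_n$ directly but construct an explicit homotopy equivalence to a known minimal model of $\UConf(n,\R^2)$, namely the Fox--Neuwirth/Salvetti cell complex of the braid arrangement, whose cells are also indexed by ordered partitions of $n$ decorated with interleaving data; ideally this is realised as a discrete Morse (collapsing) matching on $N\Ch(Z)_n$, performed uniformly in $n$ so as to be compatible, as $n$ varies, with the concatenation maps $\UConf(n,\R^2)\times\UConf(m,\R^2)\to\UConf(n+m,\R^2)$. Producing this matching and checking that it is well defined across the shuffle relations is the step I expect to be the main obstacle: the indexing sets match transparently, but the acyclic matching has to be organised coherently in $n$.

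A structural alternative that could replace most of the second step: observe that $\vP(Z)_\bO^\bI$ carries two interchange-compatible $A_\infty$-multiplications --- concatenation of directed loops, and the shuffle product induced by realising the unique morphism $Z\otimes Z\to Z$ out of the terminal precubical set (using $|Z\otimes Z|\cong|Z|\times|Z|$ and that $d$-paths in a product $d$-space are pairs of $d$-paths) --- and that these interchange strictly. Crucially, $\otimes$ on precubical sets is \emph{not} symmetric, since coordinate permutations are not precubical morphisms, so the shuffle product is genuinely noncommutative, and this noncommutativity is precisely the braiding; by the additivity theorem the two products upgrade $\vP(Z)_\bO^\bI$ to an $E_2$-algebra (it cannot be $E_\infty$, exactly because the shuffle product is not homotopy commutative). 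Its $\pi_0$ is the free monoid $\N$ on the class of the length-one loop, whose component is contractible, so there is an $E_2$-map from the free $E_2$-algebra on a point, namely $\coprod_{n\ge0}\UConf(n,\R^2)$, to $\vP(Z)_\bO^\bI$, inducing isomorphisms on $\pi_0$ and on the degree-$\le 1$ components. To promote this to a homotopy equivalence I would show that the cube-chain filtration of $\vP(Z)_\bO^\bI$ realises exactly the cellular (bar-construction) filtration of the free $E_2$-algebra --- cubes of a chain corresponding to little $2$-cubes and shuffle merges to operadic composition --- reducing the theorem to a comparison of two explicitly filtered spaces. Either way, the slogan is the same: shuffling cube chains on the terminal $\square$-set is braiding points in the plane.
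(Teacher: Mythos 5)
Your reduction to the cube chain category is fine as far as it goes: Theorem 7.6 of Ziemia\'nski's paper (quoted here as Theorem \ref{thm:CubeChainModel}) holds for \emph{every} bi-pointed $\square$--set, with no properness hypothesis, and your description of $\Ch(Z)$ restricted to length $n$ --- compositions of $n$ with merge morphisms decorated by shuffles --- is correct. The gap is that everything after that point is precisely the content of the theorem and is only announced, not proved. In route (a), the Artin-presentation computation of $\pi_1$ is plausible but says nothing about asphericity, and the asphericity step is delegated to an acyclic matching comparing $N\Ch(Z)_n$ with the Fox--Neuwirth/Salvetti complex which you do not construct and yourself flag as ``the main obstacle''; note also that $\Ch(Z)_n$ is a genuine category with many parallel morphisms (it is not a poset, unlike $\Ch(Y^A)$), so even setting up discrete Morse theory on its nerve requires care. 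In route (b), the $E_2$-structure via Dunn additivity is a reasonable observation, but the statement that $\vP(Z)_\bO^\bI$ is the \emph{free} $E_2$-algebra on a point is equivalent to the theorem itself; a map from the free algebra inducing isomorphisms on $\pi_0$ and on the length-$\leq 1$ pieces is far from an equivalence, and ``show the cube-chain filtration realises the bar filtration of the free $E_2$-algebra'' is again the whole difficulty restated. Even the input $\pi_0\vP(Z)_\bO^\bI\cong\N$ (connectedness of each length component) is not independently established. A further, smaller issue: Theorem \ref{thm:CubeChainModel} only gives a \emph{weak} equivalence, so to obtain the homotopy equivalence asserted in the statement you would still need $\vP(Z)_\bO^\bI$ (or each $\vP(Z;n)_\bO^\bI$) to have CW homotopy type, which is not automatic since $Z$ and $\tilde{Z}_n$ are highly self-linked.

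It is worth seeing how the paper gets around exactly these obstructions, because the contrast explains why your sketch stalls where it does. Instead of analysing the quotient-level category $\Ch(\tilde{Z}_n)$, the paper lifts to the non-self-linked $\square$--set $Y^A$ ($|A|=n$) with free $\Sigma_A$--action, shows $\vP(Y^A)_\bO^\bI\to\vP(\tilde{Z}_n)_\bO^\bI$ is a principal $\Sigma_A$--bundle (Proposition \ref{p:MainCov}), and then works entirely upstairs, where $\Ch(Y^A)$ is a poset isomorphic to the poset of regular double orders $(R(A),\sqsupseteq)$; the comparison with configurations is done on the \emph{ordered} space $\OConf(A,\R^2)$ via an explicit convex open cover indexed by semi-regular double orders and an equivariant nerve lemma, and only at the very end does one pass to $\Sigma_A$--quotients. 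This is in effect a rigorous, equivariant version of your intended Fox--Neuwirth comparison, carried out in a setting where no braid-group presentation, asphericity argument, or Morse matching is needed; the same covering $Y^A$ is also what supplies the CW homotopy type of $\vP(\tilde{Z}_n)_\bO^\bI$ (via Raussen's result for non-self-linked complexes and equivariant Milnor/Whitehead theorems). To turn your proposal into a proof you would either have to supply the missing acyclic matching (or another asphericity argument) for $N\Ch(Z)_n$, or prove the freeness of the $E_2$-structure by an honest filtration or scanning argument --- each of which is a substantial piece of work comparable to the paper's Sections 3--5.
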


\subsection*{Guide to the argument}
Following  \cite{Raussen-Trace}, we introduce the length of a directed path on (the geometric realization of) a bi-pointed $\square$--set $K$.
The length of a d-path that starts and ends at a vertex is a non-negative integer,
and homotopic d-paths have equal lengths.
As a consequence, $\vP(K)_\bO^\bI$ decomposes into the disjoint union of the spaces $\vP(K;n)_\bO^\bI$ of d-paths having length $n$.
Then we construct, for every $n\geq 0$, a bi-pointed $\square$--set $\tilde{K}_n$ equipped with a $\square$--map $\tilde{K}_n\to K$
that induces a homeomorphism $\vP(K;n)_\bO^\bI\cong \vP(\tilde{K}_n)_\bO^\bI$.
To prove the main theorem,
we need to show that there is a homotopy equivalence $\vP(\tilde{Z}_n)_\bO^\bI\simeq \UConf(n,\R^2)$.

A method of calculating the homotopy type of the execution space of $K$ is described in \cite{Z-Cub2}. 
Namely, $\vP(K)_\bO^\bI$ is weakly homotopy equivalent to the nerve of the cube chain category $\Ch(K)$ of $K$.
If $K$ is non-self-linked, then $\vP(K)_\bO^\bI$ has the homotopy type of a CW-complex, 
so we obtain a genuine homotopy equivalence.
Unfortunately, $\tilde{Z}_n$ is not non-self-linked for $n>1$,
so we cannot apply the methods of \cite{Z-Cub2} directly.
Instead, for a fixed set $A$ having $n$ elements,
we construct a non-self-linked $\square$--set $Y^A$
with an action of the group $\Sigma_A$ of permutations of $A$,
and a $\square$--map $Y^A\to\tilde{Z}_n$.
It turns out that the induced $\Sigma_A$--action on $\vP(Y^A)_\bO^\bI$ is free,
and the induced map between execution spaces  
$\vP(Y^A)_\bO^\bI\to \vP(\tilde{Z}_n)_\bO^\bI$
is a $\Sigma_A$--principal bundle,
ie, $\vP(\tilde{Z}_n)_\bO^\bI\simeq \vP(Y^A)_\bO^\bI/\Sigma_A$.
Now we can use the cube chain decomposition
and prove that $\vP(Y^A)_\bO^\bI$ is $\Sigma_A$--homotopy equivalent 
to $|\Ch(Y^A)|$.

Similarly, the unordered configuration space $\UConf(n;\R^2)$ is the quotient space of the ordered configuration space 
\begin{equation}
	\OConf(A,\R^2)=\{f:A\to \R^2\st \text{$f$ is injective}\},
\end{equation}
where $A$ is a set having $n$ elements, by the (free) action of $\Sigma_A$.
Thus, we need to prove that the spaces $|\Ch(Y^A)|$ and $\OConf(A;\R^2)$ are $\Sigma_A$--homotopy equivalent.

To compare these spaces, we introduce double orders.
A double order on a set $A$ is a pair of (strict) partial orders on $A$ that satisfies certain conditions (Definition \ref{d:DoublyOrderedSet}).
We introduce two different (non-strict) partial orders, denoted by ${\subseteq}$ and ${\sqsubseteq}$ respectively, on the set of double orders $D(A)$.

It turns out that the category $\Ch(Y^A)$ is equivariantly isomorphic with the poset $(R(A),\sqsupseteq)$,
where $R(A)\subseteq D(A)$ is the subset of regular double orders.

Further on, every double order on $A$ defines an open subset of $\OConf(A,\R^2)$.
Thus, we obtain a cover of $\OConf(A,\R^2)$, indexed by the set of semi-regular double orders $R^+(A)\subseteq D(A)$,
which is $\Sigma_A$--equivariant and complete (ie, closed with respect to intersection).
As a consequence, an equivariant version of Nerve Lemma (Lemma \ref{l:NerveComplete})
gives an equivariant homotopy equivalence $\OConf(A,\R^2)\simeq |(R^+(A),\supseteq)|$.

Finally, we prove that the spaces $|(R^+(A),\supseteq)|$ and $|(R(A),\sqsupseteq)|$ are $\Sigma_A$ homotopy equivalent.

\begin{proof}[Proof of {\ref{t:Main}}]
	In the diagram
\begin{equation}\label{e:LargeDiag}
	\begin{diagram}
		\node{\vP(Y^A)_\bO^\bI}
			\arrow{s,lr}{\text{Prop.\ref{p:ChPYEqHE}}}{\simeq_{\Sigma_A}}
			\arrow{e,t}{\text{Prop.\ref{p:MainCov}}}
		\node{\vP(\tilde{Z}_n)_\bO^\bI}
	\\
		\node{|\Ch(Y^A)|}
			\arrow{s,lr}{\text{Prop.\ref{p:ChRComp}}}{\simeq_{\Sigma_A}}
	\\
		\node{|(R(A),\sqsupseteq)|}
			\arrow{s,lr}{\text{Prop.\ref{p:RRComp}}}{\simeq_{\Sigma_A}}
	\\
		\node{|(R^+(A),\supseteq)|}
			\arrow{s,lr}{\text{Prop.\ref{p:RConfEq}}}{\simeq_{\Sigma_A}}
	\\
		\node{\OConf(A,\R^2)}
			\arrow{e}
		\node{\UConf(n,\R^2)}
	\end{diagram}
\end{equation}
	both the horizontal maps are principal $\Sigma_A$--bundles
	and all the vertical maps are $\Sigma_A$--homotopy equivalences.
	Therefore,
	\[
		\vP(\tilde{Z}_n)_\bO^\bI
		\simeq 
		\vP(Y^A)_\bO^\bI/\Sigma_A
		\simeq 
		\OConf(A,\R^2)/\Sigma_A
		=
		\UConf(n,\R^2),
	\]
	and, by Proposition \ref{p:LenDecTilde},
	\[
		\vP(Z)_\bO^\bI\cong \coprod_{n\geq 0} \vP(\tilde{Z}_n)_\bO^\bI,
	\]
	which ends the proof.
\end{proof}

In the final section, we discuss some applications of results obtained in this paper.

\begin{enumerate}
\item
Since $Z$ is a final object in the category of bi-pointed $\square$--sets,
every $\square$--set $K$ is equipped with a unique $\square$--map $K\to Z$,
which in turn induces a map $\vP(K)_\bO^\bI\to\vP(Z)_\bO^\bI$.
This allows to construct some invariants of $\square$--sets.
\item
We prove that every component of the execution space of $K$ can be realized, up to finite covering,
as a component of the execution space of a non-self-linked $\square$--set.
Further on, if some invariants mentioned above vanish, we can obtain a strict realization.
\item
We show that the execution spaces of all finite $\square$--sets have the homotopy types of CW-complexes.
\item
We note that the geometric realization $|Z|$ is homeomorphic, as a (non-directed) topological space, to the James construction \cite{James} on $S^1$.
This allows to interpret configuration spaces as ``directed" self-maps of the sphere $S^2$.
\item
Finally, we present a small presentation of the unordered configuration spaces as nerves of certain categories.
\end{enumerate}

\section{$\square$--sets}

\subsection*{$\square$--sets}

\emph{A $\square$--set} (or \emph{a precubical set}) $K$ is a sequence of pairwise disjoint sets $(K[n])_{n\geq 0}$ equipped with the \emph{face maps} $d^\varepsilon_i:K[n]\to K[n-1]$ for $i\in \{1,\dotsc,n\}$, $\varepsilon\in\{0,1\}$ that satisfy the precubical relations: $d^\varepsilon_i d^\eta_j=d^\eta_{j-1}d^\varepsilon_i$ for $i<j$.
\emph{A $\square$--map} $f:K\to L$ between $\square$--sets $K,L$ is a sequence of functions $f[n]:K[n]\to L[n]$ that commute with the face maps.

Elements $K[n]$ will be called \emph{$n$--cubes}; $0$--cubes and $1$--cubes will be called \emph{vertices} and \emph{edges}, respectively.

\emph{A bi-pointed} $\square$--set is a triple $(K,\bO_K,\bI_K)$, where $K$ is a $\square$--set and $\bO_K,\bI_K\in K[0]$ are \emph{the initial} and \emph{the final vertex}, respectively.
A precubical map $f:K\to L$ between bi-pointed $\square$--sets is \emph{bi-pointed} if $f(\bO_K)=\bO_L$ and $f(\bI_K)=\bI_L$.
We will skip the lower indices at $\bO$ and $\bI$ whenever it does not lead to confusion.

$\square$--sets and $\square$--maps form a category that is denoted $\square\Set$. Similarly, the category of bi-pointed $\square$--sets and bi-pointed $\square$--maps will be denoted by $\square\Set_*^*$.

\begin{exa}
	Let $A$ be a finite totally ordered set. \emph{The standard $A$--cube} $\square^A$ is a precubical set such that
	\begin{itemize}
	\item
		$\square^A[m]$ is the set of functions $c:A\to \{0,1,*\}$ such that $c^{-1}(*)$ has exactly $m$ elements,
	\item
		If $c^{-1}(*)=\{a_1<\dotsm<a_m\}$, then
\[
		d_i^\varepsilon(c)(a)=\begin{cases}
			\varepsilon & \text{for $a=a_i$}\\
			c(a) & \text{for $a\neq a_i$.}
		\end{cases}
\]		
	\end{itemize}
	When $\square^A$ is regarded as a bi-pointed $\square$--set, we take the constant functions $\bO_{\square^A}(a)=0$, $\bI_{\square^A}(a)=1$ as the initial and final vertices, respectively.
\end{exa}

	Denote $\square^n=\square^{\{1<\dotsm<n\}}$. Let $u_n$ be the only $n$--cube of $\square^n$; we have $u_n(i)=*$ for all $i$. For any $\square$--set $K$ and any $c\in K[n]$, there is a unique $\square$--map $\iota_c:\square^n\to K$ such that $\iota_c(u_n)=c$, which is called \emph{the canonical map} related to the $n$--cube $c$.

\begin{exa}\label{x:Z}
	Let $Z$ be a $\square$--set such that $Z[m]=\{z^m\}$ for all $m\geq 0$; 
	the face maps are determined uniquely.
	$Z$ is a final object in $\square\Set$ and $(Z,z^0,z^0)$ is a final object in $\square\Set_*^*$ as well.
\end{exa}

\subsection*{Faces}

Let $K$ be a $\square$--set.
A cube $b\in K[n-r]$ is \emph{a face} of $c\in K[n]$ if there exist $i_1,\dotsc,i_r$ and $\varepsilon_1,\dotsc,\varepsilon_r$ such that
	\begin{equation}\label{e:FacePresentation}
		b=d^{\varepsilon_1}_{i_1} d^{\varepsilon_2}_{i_2}\dotsc d^{\varepsilon_r}_{i_r}(c)	
	\end{equation}
This presentation can be always chosen so that $i_1<\dotsm <i_r$ (\cite[Lemma 6.11]{FGR}).

For a subset $A=\{a_1<\dotsm<a_k\}\subseteq \{1,\dotsc,n\}$, $\varepsilon\in\{0,1\}$ and $c\in K[n]$, denote
\begin{equation}\label{e:DDef}
	d^\varepsilon_A(c)=d^\varepsilon_{a_1} d^{\varepsilon}_{a_2}\dotsm d^\varepsilon_{a_k}(c).
\end{equation}
Vertices $d^0(c)=d^0_{\{1,\dotsc,n\}}(c)$ and $d^1(c)=d^1_{\{1,\dotsc,n\}}(c)$ will be called \emph{the initial} and \emph{the final vertex} of $c$, respectively.

\subsection*{Directed spaces}

\emph{A d-space} \cite{Grandis}, or \emph{a directed space},  is a topological space $X$ with a family $\vP(X)\subseteq P(X)=\map([0,1],X)$ of paths, called \emph{d-paths} (or \emph{directed paths}), that contains all constant paths and is closed with respect to concatenations and increasing reparametrizations. The concatenation of paths $\alpha,\beta\in P(X)$, $\alpha(1)=\beta(0)$, is the path
\[
	\alpha*\beta(t)=
	\begin{cases}
		\alpha(2t) & \text{for $t\in[0,\tfrac{1}{2}]$,}\\
		\beta(2t-1) & \text{for $t\in[\tfrac{1}{2},1]$.}
	\end{cases}
\]
A continuous map $f:X\to Y$ between d-spaces is \emph{a d-map} if it preserves d-paths;
d-spaces and d-maps form a category $\dTop$, which is complete and cocomplete.
\emph{A bi-pointed d-space} is a d-space $X$ with two chosen points $\bO_X,\bI_X\in X$.

For $x,y\in X$ denote
\[
	\vP(X)_x^y=\{\alpha\in \vP(X)\;|\;\text{$\alpha(0)=x$ and $\alpha(1)=y$}\}.
\]
For $s<t$, a path $[s,t]\to X$ will be called a d-path if its linear reparametrization to the unit interval is a d-path. The space of such paths will be denoted $\vP_{[s,t]}(X)$, or $\vP_{[s,t]}(X)_x^y$ if the endpoints are fixed.

Examples of d-spaces are:
\begin{itemize}
\item
	\emph{The directed interval} $\vI$, which is the unit interval $I=[0,1]$ with increasing continuous maps $I\to I$ as d-paths.
\item
	\emph{The directed $n$--cube $\vI^n$}, which is the $n$--fold Cartesian product of directed intervals $\vI$. A path $\alpha=(\alpha_1,\dotsc,\alpha_n)\in P(\vI^n)$ is a d-path if all its coordinates $\alpha_i$ are increasing.
\end{itemize}

\subsection*{Geometric realization}

For $n>0$, $\varepsilon\in\{0,1\}$ and $i\in\{1,\dotsc,n\}$ define a map
\begin{equation}\label{e:DeltaMaps}
	\delta^\varepsilon_i:\vI^{n-1} \ni (x_1,\dotsc,x_{n-1})\mapsto (x_1,\dotsc,x_{i-1},\varepsilon,x_i,\dotsc,x_{n-1})\in \vI^n.
\end{equation}

\emph{The geometric realization} of a $\square$--set $K$ is the d-space
	\[
		\georel{K}=\coprod_{n\geq 0} K[n]\times \vI^n/\sim,
	\]
	where the relation $\sim$ is generated by $(d_i^\varepsilon(c), \bx)\sim (c, \delta^\varepsilon_i(\bx))$ for all $c\in K[n]$, $\bx\in \vI^{n-1}$, $i\in\{1,\dotsc,n\}$, $\varepsilon\in\{0,1\}$.
The equivalence class of $(c,\bx)$, $c\in K[n]$, $\bx\in\vI^n$ will be denoted by $[c;\bx]$. 	
	As in (\ref{e:DDef}), we define $\delta^\varepsilon_A=\delta^\varepsilon_{a_k} \delta^{\varepsilon}_{a_{k-1}}\dotsm \delta^\varepsilon_{a_1}$
	for $A=\{a_1<\dotsm<a_k\}$; obviously $[d_A^\varepsilon(c), \bx]=[c, \delta^\varepsilon_A(\bx))$ for $c\in K[n]$, $A\subseteq \{1,\dotsc,n\}$ and  $\bx\in \vI^{n-|A|}$.

The geometric realization defines a functor $\georel{-}:\square\Set\to \dTop$ and its bi-pointed counterpart $\square\Set_*^*\to\dTop_*^*$.

Every point $p\in |K|$ has the unique \emph{canonical presentation}
\begin{equation}
	[c;\bx]=[c;(x_1,\dotsc,x_n)]
\end{equation}
such that $x_i\neq 0,1$ for all $i$.
The cube $c$ is called \emph{the carrier} of $p$ and denoted $\carr(p)$.

The following is shown in \cite[Section 3]{Z-Cub2}.
\begin{prp} \label{p:DPathPres}
Every d-path $\alpha\in\vP(K)$ has a presentation
\begin{equation}\label{e:PathPresentation}
	\alpha = [c_1;\beta_1]\overset{t_1}{*}[c_2;\beta_2]\overset{t_2}{*}\dotsm\overset{t_{m-1}}{*}[c_m;\beta_m]
\end{equation}
for $0=t_0<t_1<\dotsm<t_{m-1}<t_m=1$, $c_i\in K[n_i]$, $\beta_i\in \vP_{[t_{m-1},t_m]}(\vI^{n_i})$.
Furthermore, we may assume that:
\begin{enumerate}[\normalfont(a)]
\item
	 no coordinate of $\beta_i(t_{i-1})$ is equal to $1$,
\item
	no coordinate of $\beta_i(t_i)$ is equal to $0$,
\item
	$d^1_{B_i}(c_i)=d^0_{A_{i+1}}(c_{i+1})$ for some sets $A_i,B_i\subseteq \{1,\dotsc,n\}$,
\item
	there exist $\bx_i\in \vI^{n_i-|B_i|}=\vI^{n_{i+1}-|A_{i+1}|}$ such that $\beta_i(t_i)=\delta^1_{B_i}(\bx_i)$, $\beta_{i+1}(t_i)=\delta^0_{A_{i+1}}(\bx_i)$.\qed
\end{enumerate}
\end{prp}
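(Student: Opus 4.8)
The plan is to cut $[0,1]$ into finitely many subintervals on each of which $\alpha$ stays inside a single cube, lift each piece to a d-path in a directed cube, and then massage the resulting data so that (a)--(d) hold.

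\emph{Step 1: a cellwise, liftable decomposition.} Since $\georel{K}$ carries a CW structure with one open cell $e_c$ per cube $c$ and $\alpha([0,1])$ is compact, the path meets only finitely many open cells. The crucial point --- the only place where directedness is used --- is the local statement: for every $t_0\in[0,1]$ there exist $\varepsilon>0$ and a cube $c$ with $\alpha([t_0,t_0+\varepsilon])\subseteq\im(\iota_c)$ (and, symmetrically, another cube handling $[t_0-\varepsilon,t_0]$). To see this, write $\alpha(t_0)=[b;\bx]$ in canonical form, so $b=\carr(\alpha(t_0))$. For $t$ slightly larger than $t_0$ the point $\alpha(t)$ lies in a cell $e_c$ whose closure contains $\alpha(t_0)$, i.e. a cube having $b$ as a face; and since the lift of $\alpha$ to $\vI^{\dim c}$ is non-decreasing, its value at $t_0$ (a limit from above) has no coordinate equal to $1$, so $b=d^0_A(c)$ is a lower face. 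Moreover, once $\alpha$ has entered $e_c$ a coordinate that has left $0$ never returns to it, so $\alpha$ cannot come back to a cell on which $c$ is absent; together with continuity (the path has no time to reach the upper faces of $c$ for $\varepsilon$ small) this forces $\alpha([t_0,t_0+\varepsilon])$ into the single $\im(\iota_c)$. Covering $[0,1]$ by such intervals and taking a finite subcover yields a partition $0=t_0<\dots<t_m=1$ and cubes $c_i\in K[n_i]$ with $\alpha([t_{i-1},t_i])\subseteq\im(\iota_{c_i})$, where moreover each piece may be arranged to pass through $e_{c_i}$. Since $\iota_{c_i}$ is a homeomorphism of $(0,1)^{n_i}$ onto $e_{c_i}$ (uniqueness of canonical presentations) and the d-structure on $\georel{K}$ is the quotient one, each piece lifts to a unique d-path $\beta_i\in\vP_{[t_{i-1},t_i]}(\vI^{n_i})$ with $\iota_{c_i}\circ\beta_i=\alpha|_{[t_{i-1},t_i]}$, giving a presentation of the form \eqref{e:PathPresentation}.

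\emph{Step 2: arranging (a)--(d).} This part is formal. If a coordinate of $\beta_i(t_{i-1})$ equals $1$, then, $\beta_i$ being non-decreasing, it is constantly $1$ on $[t_{i-1},t_i]$, so the piece factors through the face $d^1_S(c_i)$ with $S$ the set of such coordinates; replacing $(c_i,\beta_i)$ by $(d^1_S(c_i),\beta_i)$ with the $S$-coordinates deleted establishes (a). Symmetrically, coordinates of $\beta_i(t_i)$ equal to $0$ are constantly $0$ on the piece, and passing to the corresponding $d^0$-face gives (b); the two reductions are disjoint and do not interfere. Once (a) and (b) hold, let $B_i$ be the set of coordinates of $\beta_i(t_i)$ equal to $1$ and $A_{i+1}$ the set of coordinates of $\beta_{i+1}(t_i)$ equal to $0$; comparing $[c_i;\beta_i(t_i)]=\alpha(t_i)=[c_{i+1};\beta_{i+1}(t_i)]$ with the uniqueness of the canonical presentation of $\alpha(t_i)$ yields $d^1_{B_i}(c_i)=\carr(\alpha(t_i))=d^0_{A_{i+1}}(c_{i+1})$, which is (c), and $\beta_i(t_i)=\delta^1_{B_i}(\bx_i)$, $\beta_{i+1}(t_i)=\delta^0_{A_{i+1}}(\bx_i)$ for $\bx_i$ the canonical coordinate vector of $\alpha(t_i)$, which is (d).

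The main obstacle is Step 1: an arbitrary continuous path can oscillate between two adjacent open cells arbitrarily near a point, so a cellwise decomposition is impossible without genuinely using that $\alpha$ is directed, as in the local statement above. A secondary, minor difficulty is making the lift $\beta_i$ and its d-path property precise when $c_i$ is self-linked, which is why one arranges each piece to meet $e_{c_i}$ --- where $\iota_{c_i}$ is injective --- and then extends the lift by continuity to the closed subinterval.
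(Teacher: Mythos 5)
Your Step 2 is correct, but Step 1 contains the real gap, and it comes from arguing against the grain of the paper's definitions. In this paper $\georel{K}$ is the quotient $\coprod_{n\geq 0}K[n]\times\vI^n/\sim$ taken in $\dTop$, so $\vP(\georel{K})$ is by construction the d-structure \emph{generated} by the images of d-paths of the cubes; since finite concatenations of (reparametrized) paths of the form $[c;\beta]$, $\beta\in\vP(\vI^{n})$, together with constant paths already form a d-structure, $\vP(\georel{K})$ consists exactly of such concatenations. Hence the existence of some presentation (\ref{e:PathPresentation}) is essentially definitional, which is why the paper offers no argument and simply refers to \cite[Section 3]{Z-Cub2}. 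Your Step 1 instead tries to extract a cellwise decomposition from a local analysis, and the assertions it leans on are precisely the ones you are not yet entitled to: ``the lift of $\alpha$ to $\vI^{\dim c}$ is non-decreasing'' presupposes that a lift exists (which is what is being proved), and ``once $\alpha$ has entered $e_c$ a coordinate that has left $0$ never returns to it'' is a monotonicity-across-cells claim whose only available justification, under the quotient d-structure, goes through the presentation itself --- there is no other handle on $\vP(\georel{K})$. Worse, for self-linked cubes (and for $Z$, the main object here, \emph{every} cube is self-linked) the ``coordinates in $c$'' are not well-defined continuous functions on a neighborhood of a boundary point: already on the directed circle the coordinate jumps from values near $1$ to values near $0$ at the vertex, so the continuity/monotonicity reasoning on the open star does not parse. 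So Step 1 as written is circular or incomplete; it would be a genuine theorem only under a different, local definition of d-paths, which is not the one used in this paper.

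Step 2 is fine and is the actual content of the proposition: deleting the coordinates of $\beta_i$ that are forced to be constantly $1$ (resp.\ constantly $0$) by monotonicity yields (a) and (b), the two sets of deleted coordinates being disjoint, and then uniqueness of the canonical presentation of $\alpha(t_i)$ gives (c) and (d) with $\bx_i$ the canonical coordinate vector of $\alpha(t_i)$, $B_i$ the coordinates of $\beta_i(t_i)$ equal to $1$ and $A_{i+1}$ those of $\beta_{i+1}(t_i)$ equal to $0$. This matches the normalization carried out in the cited source. To repair the write-up, replace Step 1 by the one- or two-line observation about the colimit d-structure above (or an explicit citation of it), and keep Step 2 verbatim; no compactness or open-star argument is needed.
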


\begin{exa}
	The geometric realization of the standard $n$--cube $\square^n$ is the directed $n$--cube $\vI^n$.
\end{exa}

\def\len{\mathrm{len}}

\subsection*{Cube chains}
Here we follow \cite{Z-Cub2}.
Let $K$ be a bipointed precubical set. \emph{A cube chain} on $K$ is a sequence of cubes $\bc=(c_1,\dotsc,c_l)$, $c_i\in K[n_i]$, $n_i>0$ such that
	\begin{itemize}
	\item
		$d^0(c_1)=\bO_K$,
	\item
		$d^1(c_l)=\bI_K$,
	\item
		$d^1(c_i)=d^0(c_{i+1})$ for $i=1,\dotsc,l-1$.
	\end{itemize}
\emph{The dimension} of a cube chain $\bc$ is the sequence $\bn^\bc=(n_1,\dotsc,n_l)$ and \emph{the length} of $\bc$ is $\len(\bc)=n_1+\dotsm+n_l$.
Let $\Ch(K)$ and $\Ch(K;n)$ denote the set of cube chains on $K$, and cube chain on $K$ having length $n$, respectively.

We need to define morphisms between cube chains to make $\Ch(K)$ into a category.
\emph{The serial wedge} of bi-pointed $\square$--sets $K$, $L$ is a bi-pointed $\square$--set
\[
	K\vee L = (K\sqcup L)/ \bI_K \sim \bO_L
\]
with $\bO_{K\vee L}=\bO_K$, $\bI_{K\vee L}=\bI_L$.
For a sequence $\bn=(n_1,\dotsc,n_{l})$ of positive integers, \emph{the $\bn$--wedge cube} is a $\square$--set
\[
	\square^{{\vee}\bn}=\square^{n_1}\vee\dotsm\vee \square^{n_l}.
\]
There is 1-1 correspondence between cube chains $\bc=(c_i)_{i=1}^l$ such that $c_i\in K[n_i]$ and bi-pointed $\square$--maps $\square^{{\vee}\bn}\to K$:
the map corresponding to $\bc$ is determined by the condition $\bc(u_{n_i})=c_i$.
\emph{The cube chain category} of $K$ is the category with $\square$--maps $\bc:\square^{{\vee}\bn}\to K$ as objects, and commutative diagrams
\[
	\begin{diagram}
		\node{\square^{{\vee}\bm}}
			\arrow{e,t}{\ba}
			\arrow{s}
		\node{K}
			\arrow{s,r}{=}
	\\
		\node{\square^{{\vee}\bn}}
			\arrow{e,t}{\bb}
		\node{K}
	\end{diagram}
\]
as morphisms from $\ba$ to $\bb$. This category will be also denoted by $\Ch(K)$.

The importance of the cube chain category comes from the following.
\begin{thm}[{\cite[Theorem 7.6]{Z-Cub2}}]\label{thm:CubeChainModel}
	For every bi-pointed $\square$--set $K$, the spaces $\vP(K)_\bO^\bI$ and $\georel{\Ch(K)}$ are naturally weakly homotopy equivalent (ie, as functors $\square\Set_*^*\to\hTop$).
\end{thm}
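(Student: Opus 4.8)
The plan is to realize $\vP(K)_\bO^\bI$ as a homotopy colimit over the category $\Ch(K)$ of a diagram of contractible spaces. Since the homotopy colimit over a small category $\cC$ of the constant one-point diagram is the classifying space of $\cC$, and an objectwise weak equivalence of diagrams induces a weak equivalence of homotopy colimits, this gives $\vP(K)_\bO^\bI\simeq_w\georel{\Ch(K)}$; naturality in $K$ is then automatic, since every construction below is natural for $\square$-maps.

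First I set up the diagram. To an object $\bc\colon\square^{{\vee}\bn}\to K$ of $\Ch(K)$ I assign $F(\bc)=\vP(\square^{{\vee}\bn})_\bO^\bI$; a morphism $\ba\to\bb$, which by definition is a $\square$-map $\square^{{\vee}\bm}\to\square^{{\vee}\bn}$ over $K$, realizes to a d-map and hence induces $F(\ba)\to F(\bb)$, so that $F$ is a covariant functor $\Ch(K)\to\Top$. Each $F(\bc)$ is contractible: $\georel{\square^{{\vee}\bn}}$ is the linear concatenation $\vI^{n_1}\vee\dots\vee\vI^{n_l}$, so a d-path from $\bO$ to $\bI$ is the concatenation of d-paths traversing the cubes $\vI^{n_i}$ one after another, and each $\vP(\vI^{n_i})_\bO^\bI$ deformation retracts onto the diagonal d-path via the linear homotopy $s\mapsto(1-s)\alpha+s\cdot(\text{diagonal})$, which stays monotone in every coordinate. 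Post-composing with $\georel{\iota_\bc}\colon\georel{\square^{{\vee}\bn}}\to\georel{K}$ defines a natural transformation from $F$ to the constant diagram $\vP(K)_\bO^\bI$, hence a comparison map $\Phi\colon\hocolim_{\Ch(K)}F\to\vP(K)_\bO^\bI$.

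The heart of the matter is to prove that $\Phi$ is a weak equivalence, and for this I would build a $\Ch(K)$-indexed open cover of $\vP(K)_\bO^\bI$. Using the presentation of a d-path from Proposition \ref{p:DPathPres}, a d-path determines a finite sequence of cube-interiors it successively meets --- read off from a presentation, not merely from its carrier, since for self-linked $K$ the map $\square^{{\vee}\bn}\to K$ need not be cellwise injective. For a cube chain $\bc=(c_1,\dots,c_l)$ let $U(\bc)$ be the set of d-paths admitting a presentation refining $\bc$, that is, passing in turn through prescribed neighborhoods of the diagonals of $c_1,\dots,c_l$; this set is open. One then checks: (i) the $U(\bc)$ cover, because any d-path may be subdivided at vertices into a concatenation of cube-traversing pieces; (ii) each $U(\bc)$ deformation retracts, by a ``straighten and reparametrize'' homotopy, onto the image of $F(\bc)$, and is therefore contractible; (iii) these retractions, and the assignment $\bc\mapsto U(\bc)$, are compatible with the morphisms of $\Ch(K)$, so that the cover is good relative to the category $\Ch(K)$. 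A projection lemma of Dugger--Isaksen / nerve-theorem type for such covers then yields $\vP(K)_\bO^\bI\simeq_w\hocolim_{\Ch(K)}U(-)\simeq_w\hocolim_{\Ch(K)}F$, and combining this with the contractibility of $F$ identifies the right-hand side with $\georel{\Ch(K)}$ up to weak equivalence.

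The main obstacle is precisely step (iii), together with the bookkeeping in (ii). Because $\Ch(K)$ is a genuine category --- cube chains may have non-trivial automorphisms, and the refinement morphisms do not form a poset --- the straightening homotopies and the neighborhoods $U(\bc)$ must be chosen functorially and coherently for the whole category at once, which essentially forces one to work with natural (length-parametrized) d-paths rather than with the unit interval. For self-linked $K$ there is the further complication that a d-path must be equipped with a chosen presentation to be assigned a cube chain; the cleanest route around this is probably to prove the theorem first for non-self-linked $K$, where $\square^{{\vee}\bn}\to K$ is an embedding and the cover is literally indexed by a poset, and then to descend along a non-self-linked subdivision of $K$. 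Arranging this coherence is where essentially all of the work lies.
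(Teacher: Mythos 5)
Note first that the paper you are reading does not prove this statement at all: it is imported verbatim from \cite[Theorem 7.6]{Z-Cub2}, so the only ``proof'' here is the citation. Your overall architecture --- a diagram over $\Ch(K)$ of contractible spaces of d-paths on $\square^{{\vee}\bn}$, a comparison map from its homotopy colimit to $\vP(K)_\bO^\bI$, and naturality for free --- does mirror the cited proof, whose shape is recalled later in the proof of Proposition \ref{p:ChPYEqHE}: there the diagram consists of the \emph{natural} d-path spaces $\vN(\square^{{\vee}\bn^{\bc}})_\bO^\bI$, and both maps $\georel{\Ch(K)}\leftarrow\hocolim_{\bc}\vN(\square^{{\vee}\bn^{\bc}})_\bO^\bI\rightarrow\vN^t(K)_\bO^\bI$ are shown to be weak equivalences.

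The genuine gap is your step (i). The d-paths admitting a presentation refining a cube chain are exactly the \emph{tame} d-paths (concatenations of segments running from the initial to the final vertex of a cube), and for general $K$ these form a proper subset of $\vP(K)_\bO^\bI$: your assertion that ``any d-path may be subdivided at vertices into a concatenation of cube-traversing pieces'' is false. For instance, in the boundary $\partial\square^3\subseteq\square^3$ (the sub--$\square$--set generated by the six $2$--faces), the d-path from $(0,0,0)$ to $(1,1,1)$ that crosses the face $y=0$ to the interior edge point $(1,0,\tfrac12)$ and then crosses the face $x=1$ meets no intermediate vertex and traverses no cube from its initial to its final vertex, so it lies in none of your sets $U(\bc)$; if instead you enlarge $U(\bc)$ by neighborhoods to force coverage, then your retraction in (ii) must continuously push such non-tame paths through vertices they never visit, which is exactly the hard point. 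What fills this hole in the cited proof is the tamification/naturalization machinery (the inclusion of natural tame d-paths into all d-paths is a weak equivalence, cf.\ \cite[Corollary 6.6]{Z-Cub2} as used in Proposition \ref{p:ChPYEqHE}); your sketch silently assumes it. Two smaller corrections: $\Ch(K)$ is an upwards directed Reedy category and a bi-pointed $\square$--endomorphism of $\square^{{\vee}\bn}$ is the identity, so cube chains have no nontrivial automorphisms --- the coherence issue you locate in (iii) is real but is handled by working with natural paths, and it is not where the main difficulty lies; and the fallback ``descend along a non-self-linked subdivision of $K$'' is not available off the shelf, since no directed subdivision compatible with $\vP(-)_\bO^\bI$ is provided by the paper or the cited source (indeed this paper's detour through $Y^A\to\tilde Z_n$ exists precisely to avoid needing one).
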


It is known that $\vP(K)_\bO^\bI$ has a homotopy type of a CW-complex 
if $K$ is non-self-linked \cite{Raussen-Trace}.
We extend this result to arbitrary finite $\square$--sets (Proposition \ref{p:PKisCW}).
Therefore, the spaces in Theorem \ref{thm:CubeChainModel} are homotopy equivalent (Corollary \ref{c:ChHE}).

\subsection*{Length of d-paths}
Following \cite{Raussen-Trace},
we define the length of a d-path $\alpha=(\alpha_1,\dotsc,\alpha_n)\in \vP(\vI^n)$ as $\mathrm{len}(\alpha)=\sum_{i=1}^n (\alpha_i(1)-\alpha_i(0))$. 
This definition extends to d-paths on geometric realizations of arbitrary $\square$--sets obeying the following conditions.
\begin{itemize}
\item
	The geometric realizations of $\square$--maps preserve length, ie, $\len({\georel{f}}\circ \alpha)=\len(\alpha)$ for every $\square$--map $f:K\to L$ and $\alpha\in\vP(K)$,
\item
	Length is additive, ie, $\len(\alpha*\beta)=\len(\alpha)+\len(\beta)$  for $\alpha,\beta\in\vP(K)$, $\alpha(1)=\beta(0)$.
\end{itemize}

The length of every bi-pointed d-path is a non-negative integer,
and homotopic d-paths have the same length.
Thus, for every bi-pointed $\square$--set $K$ there is a presentation
\begin{equation}\label{e:LengthDecomp}
	\vP(K)_\bO^\bI \cong \coprod_{n\geq 0} \vP(K; n)_\bO^\bI,
\end{equation}
where $\vP(K; n)_\bO^\bI$ stands for the space of d-paths having length $n$.

\subsection*{Altitude function}
\emph{An altitude function} on a $\square$--set $K$ is a function $\alt:\coprod_{n\geq 0} K[n]\to \Z$
such that $\alt(d^\varepsilon_i(c))=\alt(c)+\varepsilon$. If $K$ is bi-pointed, we additionally assume that $\alt(\bO_K)=0$.
If a bi-pointed $\square$--set $K$ is connected (ie, cannot be presented as a sum of disjoint sub--$\square$--sets), then there exists at most one altitude function on $K$.

\begin{exa}{\ }\label{exa:Altitude}
\begin{enumerate}[\normalfont(a)]
\item
	The standard $A$--cube $\square^A$ admits an altitude function $\alt(c)=|c^{-1}(1)|$.
\item
	If $K,L\in \square\Set_*^*$ admit altitude functions $\alt_K$ and $\alt_L$, respectively, then
	\[
		\alt(c)=
		\begin{cases}
			\alt_K(c) & \text{for $c\in K$},\\
			\alt_L(c)+\alt_K(\bI_K) & \text{for $c\in L$}
		\end{cases}
	\]
	is an altitude function on $K\vee L$.
\item
	If $\bn=(n_1,\dotsc,n_l)$ is a sequence of positive integers, then the unique altitude function on $\square^{{\vee}\bn}$ is given by
	\[
		\alt(c)=n_1+\dots+n_{i-1}+|c^{-1}(1)|
	\]
	for $c\in \square^{n_i}$.
\end{enumerate}
\end{exa}

The proof of the following is elementary.
\begin{prp}\label{p:AltitudeIsPreserved}
	Let $K$, $L$ be connected bi-pointed $\square$--sets with altitude functions $\alt_K$ and $\alt_L$, respectively. Then every bi-pointed $\square$--map $f:K\to L$ preserves altitude functions (ie, $\alt_L(f(c))=\alt_K(c)$ for all $c\in K$).\qed
\end{prp}

For an altitude function $\alt$ on $K$, we define a continuous function $\georel{\alt}:\georel{K}\to\R$ (which will be also called an altitude function) by
\[
	\georel{\alt}([c;x_1,\dotsc,x_n])=\alt(c)+x_1+\dotsc+x_n.
\]
For every d-path $\alpha$ on $\georel{K}$, we have $\len(\alpha)=\georel{\alt}(\alpha(1))-\alt(\alpha(0))$.

\begin{df}
	Let $K$ be a bi-pointed $\square$--set with an altitude function $\alt$ and denote $n=\alt(\bI)$.
	We say that a d-path $\alpha:[0,n]\to |K|$ is \emph{natural} if $\georel{\alt}(\alpha(t))=t$ for all $t\in [0,n]$.
	Let $\vN_{[0,n]}(K)_\bO^\bI$ denote the space of natural d-paths on $|K|$ from $\bO$ to $\bI$. 
\end{df}

\begin{prp}\label{p:NatProd}
	Let $K$ be a bi-pointed $\square$--set with an altitude function.
	The composition map
	\[
		\vN_{[0,n]}(K)_\bO^\bI \times \vP([0,n])_0^n \ni (\alpha,\beta) \mapsto \alpha\circ\beta\in \vP(K)_\bO^\bI
	\]
	is a functorial homeomorphism.
\end{prp}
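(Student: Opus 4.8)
The plan is to exhibit an explicit inverse of the composition map $\Phi(\alpha,\beta)=\alpha\circ\beta$ and then check that both $\Phi$ and its inverse are continuous; only the latter requires real work. Given $\gamma\in\vP(K)_\bO^\bI$, set $\beta_\gamma=\georel{\alt}\circ\gamma$. Because $\georel{\alt}(\bO)=0$, $\georel{\alt}(\bI)=n$ and $\len(\gamma|_{[s,t]})=\georel{\alt}(\gamma(t))-\georel{\alt}(\gamma(s))\ge 0$ for $s\le t$, the function $\beta_\gamma\colon[0,1]\to[0,n]$ is a nondecreasing continuous surjection, i.e.\ $\beta_\gamma\in\vP([0,n])_0^n$. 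Whenever $\beta_\gamma(s)=\beta_\gamma(t)$, the d-path $\gamma|_{[s,t]}$ has length $0$, hence is constant: writing it in the form of Proposition~\ref{p:DPathPres}, each piece $[c_i;\beta_i]$ then has $\len(\beta_i)=0$, so every $\beta_i$ is constant, and so is the whole path. Thus $\gamma$ is constant on the fibres of $\beta_\gamma$, and since $\beta_\gamma$ is a quotient map (a continuous surjection of a compact space onto a Hausdorff one), $\gamma$ factors uniquely as $\gamma=\alpha_\gamma\circ\beta_\gamma$ with $\alpha_\gamma\colon[0,n]\to\georel{K}$ continuous and $\georel{\alt}(\alpha_\gamma(s))=s$ for all $s$. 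Decomposing $\gamma$ by Proposition~\ref{p:DPathPres} once more shows that on each of the resulting altitude subintervals $\alpha_\gamma$ is a natural d-path lying in a single cube, hence $\alpha_\gamma\in\vN_{[0,n]}(K)_\bO^\bI$. Put $\Psi(\gamma)=(\alpha_\gamma,\beta_\gamma)$.

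One checks immediately that $\Phi$ and $\Psi$ are mutually inverse: $\Phi(\Psi(\gamma))=\alpha_\gamma\circ\beta_\gamma=\gamma$, while for $(\alpha,\beta)$ with $\alpha$ natural, $\georel{\alt}\circ(\alpha\circ\beta)=(\georel{\alt}\circ\alpha)\circ\beta=\beta$ and the factorization of $\alpha\circ\beta$ through the surjection $\beta$ is necessarily $\alpha$. Continuity of $\Phi$ is routine, as composition $\map([0,n],\georel{K})\times\map([0,1],[0,n])\to\map([0,1],\georel{K})$ is continuous when the middle interval is locally compact Hausdorff. Functoriality in $K$ is equally routine: for a bi-pointed $\square$--map $f\colon K\to L$ (altitude-preserving by Proposition~\ref{p:AltitudeIsPreserved}, so that $n$ agrees for $K$ and $L$) we have $\georel{f}\circ(\alpha\circ\beta)=(\georel{f}\circ\alpha)\circ\beta$ with $\georel{f}\circ\alpha$ still natural, and $\beta_{\georel{f}\circ\gamma}=\georel{\alt}\circ\georel{f}\circ\gamma=\georel{\alt}\circ\gamma=\beta_\gamma$.

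The real content is the continuity of $\Psi$. The component $\gamma\mapsto\beta_\gamma=\georel{\alt}\circ\gamma$ is continuous, being postcomposition with $\georel{\alt}$, so it suffices to show $\gamma\mapsto\alpha_\gamma$ is continuous; by the exponential law this reduces to continuity of the evaluation $(\gamma,s)\mapsto\alpha_\gamma(s)=\gamma(\tau_\gamma(s))$, where $\tau_\gamma(s)=\min\{t\in[0,1]\st\beta_\gamma(t)=s\}$. Here $(\gamma,s)\mapsto\tau_\gamma(s)$ is genuinely discontinuous --- it jumps over maximal intervals on which $\beta_\gamma$ is constant --- but $\gamma$ is constant precisely on those intervals, so composing with $\gamma$ cancels the jumps. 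Concretely: if $(\gamma_k,s_k)\to(\gamma_0,s_0)$ and $\tau_{\gamma_k}(s_k)\to t_\infty$ along a subsequence, then $\gamma_k(\tau_{\gamma_k}(s_k))\to\gamma_0(t_\infty)$; since $\georel{\alt}(\gamma_0(t_\infty))=\lim s_k=s_0=\georel{\alt}(\gamma_0(\tau_{\gamma_0}(s_0)))$ and $\beta_{\gamma_0}$ is nondecreasing, $\gamma_0$ has length $0$ between $\tau_{\gamma_0}(s_0)$ and $t_\infty$, so $\gamma_0(t_\infty)=\gamma_0(\tau_{\gamma_0}(s_0))=\alpha_{\gamma_0}(s_0)$, and as this limit does not depend on the subsequence the full sequence converges. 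This yields sequential continuity of $\Psi$. The last wrinkle is that $\georel{K}$ need not be metrizable, so sequential continuity is not a priori sufficient; I would handle this by noting that compact subsets of a CW complex lie in finite, hence metrizable, subcomplexes, so it is enough to verify continuity of $\Psi$ on compact subsets of $\vP(K)_\bO^\bI$, where the sequential argument applies. I expect this point-set bookkeeping, along with the slightly fiddly verification that $\alpha_\gamma$ really is a d-path, to be the most delicate parts of a full proof.
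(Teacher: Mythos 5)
Your construction of the inverse is exactly the one the paper uses: the paper's proof of Proposition~\ref{p:NatProd} simply records that $\gamma\mapsto(\mathrm{nat}(\gamma),\georel{\alt}\circ\gamma)$ is inverse to composition and refers to Raussen \cite{Raussen-Trace} for the existence and continuity of the naturalization map. So the substance you add is a self-contained proof of that continuity, and most of it is sound: the length-zero argument showing $\gamma$ is constant on fibres of $\beta_\gamma$, the factorization through the closed (hence quotient) map $\beta_\gamma$, the verification that $\alpha_\gamma$ is a natural d-path, the mutual-inverse check, and the reduction via the exponential law to continuity of $(\gamma,s)\mapsto\gamma(\tau_\gamma(s))$ all work.

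The one step that does not stand as written is the final reduction ``it is enough to verify continuity of $\Psi$ on compact subsets of $\vP(K)_\bO^\bI$.'' That inference requires $\vP(K)_\bO^\bI$ (or its product with $[0,n]$) to be compactly generated, which you neither prove nor can take for granted: $|K|$ need not be metrizable or locally finite (the paper's central example $|Z|$ is not), and compact-open mapping spaces into such complexes are not known to be k-spaces, so ``continuous on compacts $\Rightarrow$ continuous'' is unjustified. Fortunately the detour is unnecessary: your sequential argument uses only (i) joint continuity of evaluation $\map([0,1],\georel{K})\times[0,1]\to\georel{K}$ (valid since $[0,1]$ is locally compact Hausdorff), (ii) compactness of $[0,1]$ to extract a convergent subnet of the times $\tau_{\gamma_k}(s_k)$, and (iii) the length-zero rigidity of $\gamma_0$; all three work verbatim for nets, and the ``limit independent of the subnet'' conclusion then gives genuine continuity of the adjoint with no metrizability or k-space hypothesis. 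With that replacement (sequences $\to$ nets) your proof is complete and, unlike the paper's, does not outsource the key continuity statement.
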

\begin{proof}
	This is proven in \cite{Raussen-Trace}.
	There is a continuous map $\mathrm{nat}:\vP(K)_\bO^\bI\to \vN_{[0,n]}(K)_\bO^\bI$ such that $\mathrm{nat}(\alpha)(\alt(\alpha(t)))=\alpha(t)$.
	Hence, the map $\alpha\mapsto(\mathrm{nat}(\alpha), \alt\circ\alpha)$ is the inverse.
\end{proof}

\subsection*{Non-self-linked $\square$--sets}

A $\square$--set $K$ is \emph{non-self-linked} if every $\square$--map $\square^n\to K$ is injective for all $n\geq 0$.
This condition is equivalent to \cite[Def. 6.9]{FGR}.

We will need the following elementary criterion for non-self-linkedness.
\begin{prp}\label{p:NSLCov}
	Let $K$, $L_j$, $j\in J$ be $\square$--sets and let $f_j:L_j\to K$ be injective $\square$--maps. Assume that $L_j$ is non-self-linked for every $j\in J$, and that $K=\bigcup_{j\in J} f_j(L_j)$. Then $K$ is non-self-linked.\qed
\end{prp}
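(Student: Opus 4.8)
The plan is to prove the contrapositive, or rather to argue directly by contradiction: suppose $K$ is self-linked, so there exists some $n$ and a $\square$--map $g:\square^n\to K$ that is not injective. I would then want to produce a $j\in J$ and a $\square$--map $\square^n\to L_j$ that factors $g$, i.e. lifts $g$ through $f_j$; since $L_j$ is non-self-linked this lift must be injective, and because $f_j$ is injective the composite $f_j\circ(\text{lift})=g$ would be injective too, a contradiction.

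The key step is the lifting. First I would look at the top cell: $g(u_n)\in K[n]$, and by the covering hypothesis $K=\bigcup_{j\in J}f_j(L_j)$ there is some $j$ and some $\tilde c\in L_j[n]$ with $f_j(\tilde c)=g(u_n)$. Now recall from the excerpt that for any $\square$--set $M$ and any $d\in M[n]$ there is a unique $\square$--map $\iota_d:\square^n\to M$ with $\iota_d(u_n)=d$ (the canonical map). Apply this to $\tilde c\in L_j[n]$ to get $\iota_{\tilde c}:\square^n\to L_j$. Then $f_j\circ\iota_{\tilde c}:\square^n\to K$ sends $u_n$ to $f_j(\tilde c)=g(u_n)=g(u_n)$, and $g$ also sends $u_n$ to $g(u_n)$; by the uniqueness clause of the canonical map property applied in $K$, we get $f_j\circ\iota_{\tilde c}=g$. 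This exhibits $g$ as a factorization $\square^n\xrightarrow{\iota_{\tilde c}}L_j\xrightarrow{f_j}K$.

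The rest is immediate: $\iota_{\tilde c}$ is a $\square$--map $\square^n\to L_j$ with $L_j$ non-self-linked, so $\iota_{\tilde c}$ is injective; $f_j$ is injective by hypothesis; hence $g=f_j\circ\iota_{\tilde c}$ is injective, contradicting the choice of $g$. Since $n$ was arbitrary, every $\square$--map $\square^n\to K$ is injective, so $K$ is non-self-linked.

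The only thing that requires a moment's care — and the closest thing to an obstacle — is being precise about why it suffices to test injectivity on the canonical maps $\iota_c$ rather than on arbitrary $\square$--maps $\square^n\to K$: but this is exactly the content of the uniqueness part of the canonical-map property, since every $\square$--map $\square^n\to K$ equals $\iota_{g(u_n)}$ for its own value on $u_n$. Once that is noted, the argument is purely formal and uses nothing beyond the definitions and the universal property of $\square^n$ recalled in the excerpt. I would write it up in three or four sentences.
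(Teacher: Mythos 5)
Your argument is correct: since every $\square$--map $\square^n\to K$ equals the canonical map $\iota_{g(u_n)}$, the covering hypothesis lets you lift the top cube to some $L_j$, and uniqueness of canonical maps gives the factorization $g=f_j\circ\iota_{\tilde c}$, whence injectivity follows from the hypotheses on $L_j$ and $f_j$. The paper omits the proof (the proposition is stated as elementary with a \qed), and your write-up is exactly the intended argument, so nothing further is needed.
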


\begin{lem}\label{l:ChNSL}
	Let $K$ be a bi-pointed $\square$--set.
	\begin{enumerate}[\normalfont (a)]
	\item
		If $K$ is non-self-linked, then the category $\Ch(K)$ is a poset.
	\item
		If $K$ admits an altitude function, then every cube chain $\bc\in\Ch(K)$ is determined uniquely by the set of its cubes $\{c_1,\dotsc,c_{l}\}$.
	\item
		If $K$ is non-self-linked and admits an altitude function, then every bi-pointed $\square$--map $\square^{{\vee}\bn}\to K$ is injective.
		Moreover, for $\ba=(a_1,\dotsc,a_l), \bb=(b_1,\dotsc,b_m)\in\Ch(K)$, there exists a morphism from $\ba$ to $\bb$ in $\Ch(K)$ if and only if every $a_i$ is a face of some $b_j$.
	\end{enumerate}
\end{lem}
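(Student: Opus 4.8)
The three parts are increasingly strong statements, and I would prove them in the stated order, reusing earlier parts.

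For part (a), recall that a morphism in $\Ch(K)$ from $\ba:\square^{\vee\bm}\to K$ to $\bb:\square^{\vee\bn}\to K$ is a $\square$--map $\phi:\square^{\vee\bm}\to\square^{\vee\bn}$ making the triangle commute. To show $\Ch(K)$ is a poset I must show there is at most one such $\phi$ for any pair $(\ba,\bb)$, and that the only endomorphisms are identities. The key observation is that non-self-linkedness of $K$ forces each $\bb:\square^{\vee\bn}\to K$ to behave injectively enough that $\phi$ is determined by $\ba$ and $\bb$: since $\ba=\bb\circ\phi$ and $\bb$ is injective on each constituent cube $\square^{n_j}$, the restriction of $\phi$ to any $\square^{m_i}$ is recovered from $\ba$ and $\bb$. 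One subtlety is that $\bb$ need not be globally injective on $\square^{\vee\bn}$; but along a cube chain the "altitude"-type bookkeeping (the position in the wedge) rules out a cube of $\ba$ mapping across a wedge seam ambiguously. I would make this precise by first observing that the canonical map $\square^{\vee\bn}\to K$ restricted to each wedge summand is the canonical map of a cube, hence injective since $K$ is non-self-linked, and then arguing that $\phi$ must carry wedge seams to wedge seams in the unique order-preserving way.

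For part (b), I would use the altitude function directly. Given a cube chain $\bc=(c_1,\dots,c_l)$ with $c_i\in K[n_i]$, Proposition \ref{p:AltitudeIsPreserved} (applied via Example \ref{exa:Altitude}(c)) shows $\alt(d^0(c_i))=n_1+\dots+n_{i-1}$ and $\alt(d^1(c_i))=n_1+\dots+n_i$. Hence the altitude of the initial vertex of $c_i$ strictly increases with $i$, so the $c_i$ occur in an order forced by their altitudes; given the unordered set $\{c_1,\dots,c_l\}$ we can sort by the altitude of the initial vertex (equivalently by $\alt(d^0(c_i))$), and the cube-chain gluing conditions $d^1(c_i)=d^0(c_{i+1})$ then recover the sequence uniquely. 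The only thing to check is that two distinct cubes in the chain cannot share the same initial altitude, which is immediate since consecutive ones differ by $n_i\ge 1$ and the altitudes are strictly monotone.

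For part (c): injectivity of a bi-pointed $\square$--map $\bc:\square^{\vee\bn}\to K$ when $K$ is non-self-linked and has an altitude function. On each summand $\square^{n_i}$ the map is injective by non-self-linkedness (part (a) argument). To glue, I would use altitude: by Example \ref{exa:Altitude}(c) and Proposition \ref{p:AltitudeIsPreserved}, two cells in different summands $\square^{n_i}$, $\square^{n_j}$ with $i<j$ have images whose altitudes lie in disjoint or barely-overlapping ranges — specifically a cell of $\square^{n_i}$ has altitude in $[s_{i-1},s_i]$ where $s_i=n_1+\dots+n_i$, and overlap can only occur at the shared boundary altitude $s_i$, where the only cells are the appropriate vertices, which agree by the wedge identification. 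So $\bc$ is injective. The "moreover" claim: a morphism $\ba\to\bb$ exists iff every $a_i$ is a face of some $b_j$. The forward direction is clear: $\phi$ carries $u_{m_i}$ to a cube of $\square^{\vee\bn}$, which is a face of some $u_{n_j}$ (the only cubes of $\square^{\vee\bn}$ are faces of the top cubes of its summands), and applying $\bb$ shows $a_i$ is a face of $b_j$. For the converse, given that each $a_i$ is a face of some $b_{j(i)}$, I would construct $\phi$ by sending $u_{m_i}$ to the corresponding face of $u_{n_{j(i)}}$; one must check this is well-defined as a $\square$--map (consistency on overlaps, i.e. that consecutive cubes $a_i,a_{i+1}$ share the vertex $d^1(a_i)=d^0(a_{i+1})$ and that this vertex maps consistently) — here non-self-linkedness of $K$ and the altitude bookkeeping guarantee that the face of $u_{n_{j(i)}}$ realizing $a_i$ is unique, so there is no ambiguity, and the shared vertices match because in $K$ they are literally equal and $\bb$ is injective.

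**Main obstacle.** I expect the delicate point to be part (c)'s converse: verifying that the locally-defined assignment $u_{m_i}\mapsto(\text{face of }u_{n_{j(i)}})$ assembles into a genuine $\square$--map on $\square^{\vee\bn}$, i.e. that the chosen faces are forced to be compatible across the wedge seams. The resolution is that uniqueness of the face of $c$ mapping to a given non-self-linked sub-cube (which is where non-self-linkedness is essential — otherwise the face $i_1<\dots<i_r$ realizing $a_i$ inside $b_{j(i)}$ need not be unique) removes all choices, and altitude forces $j(i)$ to be weakly increasing in $i$, so the combinatorics are rigid. Everything else is bookkeeping with the altitude function.
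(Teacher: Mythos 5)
Your proposal is correct and follows essentially the same route as the paper: altitude bookkeeping (the intrinsic altitude of the wedge cubes in (a), the altitude of $K$ in (b) and (c)) pins each top cube $u_{m_i}$ to a single wedge summand where non-self-linkedness gives injectivity, and your hand-built gluing of $u_{m_i}\mapsto$ (the face of $u_{n_{j(i)}}$ over $a_i$) in (c) is exactly what the paper packages more slickly as composing $\ba$ with $\bb^{-1}$ after noting that the injective map $\bb$ is an isomorphism onto its image. Two small points to tighten: a morphism in $\Ch(K)$ need not carry wedge vertices to wedge vertices (e.g.\ a chain of dimension $(1,1)$ mapping into a single $2$--cube sends the middle wedge vertex to an interior vertex of the square), so in (a) the statement to use is that altitude preservation forces $f(u_{m_i})$ and $g(u_{m_i})$ into the same summand; and posetness also requires antisymmetry (no isomorphisms between distinct objects), which the paper disposes of by citing that $\Ch(K)$ is an upwards directed Reedy category.
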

\begin{proof}
	(a)
	The category $\Ch(K)$ is an upwards directed Reedy category \cite[Section 10]{Z-Cub2}, so it is enough to show that there is at most one morphism between any two objects.
	Let $\ba:\square^{{\vee}\bm}\to K$ and $\bb:\square^{{\vee}\bn}\to K$ be cube chains in $K$. Assume that $f,g:\square^{{\vee}\bm}\to\square^{{\vee}\bn}$ are $\square$--maps such that $\ba=\bb\circ f=\bb \circ g$ (ie, $f,g\in \Ch(K)(\ba,\bb)$). 
	We will show that $f(u_{m_i})=g(u_{m_i})$ for every $i\in\{1,\dotsc,l(\bm)\}$, where $u_{m_i}$ is the top cube in $\square^{m_i}\subseteq \square^{{\vee}\bm}$.
	
	By Proposition \ref{p:AltitudeIsPreserved}, $\alt(f(u_{m_i}))=\alt(u_{m_i})=\alt(g(u_{m_i}))$.
	Since cubes of positive dimensions in different components $\square^{n_j}$ of $\square^{{\vee}\bn}$ have different altitudes (see Example \ref{exa:Altitude}.(c)),
	both $f(u_{m_i})$ and $g(u_{m_i})$ lie in the same component $\square^{n_j}\subseteq \square^{{\vee}\bn}$.
	We have
	\[
		\bb|_{\square^{n_j}}(f(u_{m_i}))=
		\bb(f(u_{m_i}))=
		\ba(u_{m_i})=
		\bb(g(u_{m_i}))=
		\bb|_{\square^{n_j}}(g(u_{m_i}))
	\]
	But $\bb|_{\square^{n_j}}$ is injective, because $K$ is non-self-linked,
	which implies that $f(u_{m_i})=g(u_{m_i})$.
	Since every cube in $\square^{{\vee}\bm}$ is a face of some $u_{m_i}$, it follows that $f=g$.
	
	(b)
	The sequence of altitudes $\alt(c_1),\dotsc,\alt(c_l)$ has to be strictly increasing.

	(c)
	Let $\bb:\square^{{\vee}\bn}\to K$ be a cube chain
	and let $c\neq c'\in \square^{{\vee}\bn}[r]$.
	If $c$ and $c'$ belong to the same summand $\square^{n_i}$, then $\bb(c)\neq \bb(c')$ by injectivity of $\bb|_{\square^{n_i}}$.
	Otherwise, $\alt(c)\neq \alt(c')$, which also implies $\bb(c)\neq\bb(c')$.
	As a consequence, $\bb$ is an injective $\square$--map.
	 
	Now let $\ba:\square^{{\vee}\bm}\to K$ be a cube chain such that every $a_i$ is a face of $b_j$ for every $i\in\{1,\dotsc, l(\bm)\}$ and some $j=j(i)\in \{1,\dotsc,l(\bn)\}$.
	In particular, $\im(\ba)\subseteq\im(\bb)\subseteq K$.
	Since $\bb:\square^{{\vee}\bn} \to \im(\bb)$ is an isomorphism of $\square$--sets, the composition
	\[
		\square^{{\vee}\bm} \xrightarrow{\ba} \im(\bb) \xrightarrow{\bb^{-1}} \square^{{\vee}\bn}
	\]
	defines a morphism $\ba\to\bb$ in $\Ch(K)$.
\end{proof}

\subsection*{Length covering}
Let $K$ be an arbitrary bi-pointed $\square$--set.
Let $\preceq$ be the transitive and reflexive relation on the set of cubes of $K$ generated by
\[
	d^0_i(c)\preceq c\quad \text{and}\quad c\preceq d^1_i(c) 
\]
for all $n>0$, $c\in K[n]$ and  $i\in\{1,\dotsc,n\}$. We say that a cube $c\in K$ is \emph{accessible} if $\bO_K\preceq c$ and $c\preceq \bI_K$ for all $c\in K$. 
An elementary calculation shows that the set of accessible cubes forms a sub--$\square$--set $K_{acc}\subseteq K$,
which is non-empty if and only if $\bO_K\preceq \bI_K$.
If $K_{acc}=K$, then $K$ will be called \emph{accessible}.
Every accessible $\square$--set is connected and then admits at most one altitude function.
If an altitude function exists, then $c\preceq c'$ implies that $\alt(c)\leq \alt(c')$.

\begin{prp}\label{p:Acc}
	The inclusion $K_{acc}\subseteq K$ induces a homeomorphism $\vP(K_{acc})_\bO^\bI\cong \vP(K)_\bO^\bI$.
\end{prp}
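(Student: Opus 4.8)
The plan is to show that the $\square$--map $j:K_{acc}\hookrightarrow K$ induces a homeomorphism on execution spaces by checking separately that the induced map $j_*:\vP(K_{acc})_\bO^\bI\to\vP(K)_\bO^\bI$ is a topological embedding and is surjective. For the embedding part: since $K_{acc}$ is a sub--$\square$--set, $|K_{acc}|$ is a subcomplex of the CW--complex $|K|$, so $|j|$ is a closed embedding; post-composition with an embedding is an embedding on compact-open mapping spaces, and this persists after restricting to the subspaces $\vP(-)_\bO^\bI$. If $\bO\not\preceq\bI$, then (by the argument below) $\vP(K)_\bO^\bI$ and $\vP(K_{acc})_\bO^\bI$ are both empty and there is nothing to prove, so assume $K_{acc}\neq\emptyset$, with $\bO,\bI\in K_{acc}$.

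The heart of the matter is the claim that every d-path $\alpha\in\vP(K)_\bO^\bI$ has image inside $|K_{acc}|$. I would prove this by choosing a presentation $\alpha=[c_1;\beta_1]\overset{t_1}{*}\dotsm\overset{t_{m-1}}{*}[c_m;\beta_m]$ as in Proposition~\ref{p:DPathPres} and showing that all the cubes $c_i$ are accessible. Since $\alpha(0)=\bO$ is a vertex and, by~\ref{p:DPathPres}(a), no coordinate of $\beta_1(0)$ equals $1$, we must have $\beta_1(0)=(0,\dots,0)$, hence $\bO=d^0(c_1)$; iterating the defining relation $d^0_i(c)\preceq c$ gives $\bO\preceq c_1$. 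Symmetrically, \ref{p:DPathPres}(b) forces $\bI=d^1(c_m)$, hence $c_m\preceq\bI$. For consecutive cubes, \ref{p:DPathPres}(c) provides sets with $d^1_{B_i}(c_i)=d^0_{A_{i+1}}(c_{i+1})$, and iterating the relations $c\preceq d^1_i(c)$ and $d^0_i(c)\preceq c$ yields $c_i\preceq d^1_{B_i}(c_i)=d^0_{A_{i+1}}(c_{i+1})\preceq c_{i+1}$. By transitivity, $\bO\preceq c_1\preceq\dots\preceq c_m\preceq\bI$, so every $c_i$ is accessible, and therefore --- $K_{acc}$ being a sub--$\square$--set --- so is each of its faces.

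Granting the claim, surjectivity of $j_*$ follows by reassembling $\alpha$ inside $|K_{acc}|$: for each $i$, the canonical map $\iota_{c_i}:\square^{n_i}\to K$ has image equal to the set of faces of $c_i$, hence lands in $K_{acc}$ and corestricts to a $\square$--map $\square^{n_i}\to K_{acc}$; composing its geometric realization with $\beta_i$ gives a d-path $\gamma_i$ in $|K_{acc}|$ with $|j|\circ\gamma_i=[c_i;\beta_i]$. Concatenating the $\gamma_i$ at the same parameter values $t_i$ (legitimate since $\vP$ is closed under concatenation) produces $\gamma\in\vP(K_{acc})$ with $|j|\circ\gamma=\alpha$; moreover $\gamma(0)=\bO$ and $\gamma(1)=\bI$ because $|j|$ is injective. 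Thus $j_*(\gamma)=\alpha$, so $j_*$ is surjective, and being an injective, surjective topological embedding, it is the desired homeomorphism.

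I expect the only genuinely substantive step to be the claim that a directed path from $\bO$ to $\bI$ cannot leave $|K_{acc}|$ --- that is, the $\preceq$--chain argument --- where Proposition~\ref{p:DPathPres} and its refinements (a)--(c) do the real work; everything else (subcomplex inclusions, compact-open mapping spaces, concatenation of d-paths) is routine point-set bookkeeping.
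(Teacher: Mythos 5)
Your proof is correct and follows essentially the same route as the paper, whose entire argument is the one-line observation that every d-path from $\bO$ to $\bI$ admits a presentation as in Proposition~\ref{p:DPathPres} with $\bO\preceq c_1\preceq\dotsm\preceq c_m\preceq\bI$; your $\preceq$--chain argument is exactly the fleshed-out version of this. The additional point-set details you supply (embedding of mapping spaces, reassembly of the path inside $|K_{acc}|$) are the routine bookkeeping the paper leaves implicit.
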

\begin{proof}
	Every d-path admits a presentation (\ref{e:PathPresentation}) such that $\bO\preceq c_1\preceq \dotsm \preceq c_m\preceq \bI$.
\end{proof}

\emph{The length covering} of $K$ is a $\square$--set $\tilde{K}$ defined by
\begin{equation}
	\tilde{K}[n]=K[n]\times \Z,\qquad d^\varepsilon(c,h)=(d^\varepsilon_i(c), h+\varepsilon)
\end{equation}
for $c\in K[n]$, $\varepsilon\in\{0,1\}$, $h\in \Z$.
The projection $\tilde{K}\to K$ is a $\square$--map, and the projection $\tilde{K}\to\Z$ is an altitude function.
For $n\geq 0$, \emph{the length $n$ covering} of $K$ is the bi-pointed $\square$--set
\begin{equation}
	\tilde{K}_n=(\tilde{K},(\bO_K,0), (\bI_K,n))_{acc}.
\end{equation}

\begin{prp}\label{p:LenDecTilde}
	We have a sequence of homeomorphisms
	\[
		\vP(K)_{\bO_K}^{\bI_K}
		\cong
		\coprod_{n\geq 0} \vP(\tilde{K})_{(\bO_K,0)}^{(\bI_K,n)}
		\cong
		\coprod_{n\geq 0} \vP(\tilde{K_n})_{\bO_{\tilde{K}_n}}^{\bI_{\tilde{K}_n}}.
	\]
\end{prp}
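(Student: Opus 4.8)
The plan is to establish the two homeomorphisms in turn, both essentially by bookkeeping with the altitude function.

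\emph{First homeomorphism.} For the identity $\vP(K)_{\bO_K}^{\bI_K} \cong \coprod_{n\geq 0} \vP(\tilde K)_{(\bO_K,0)}^{(\bI_K,n)}$, I would unwind the construction of $|\tilde K|$. The projection $\tilde K\to K$ is a $\square$--map, hence induces a d-map $|\tilde K|\to |K|$; restricting to d-paths starting at a vertex over $\bO_K$ gives, for each $n$, a continuous map $\vP(\tilde K)_{(\bO_K,0)}^{(\bI_K,n)}\to \vP(K)_{\bO_K}^{\bI_K}$, and these assemble to a map out of the coproduct. The key point is that this is a bijection: given a d-path $\alpha$ in $|K|$ from $\bO_K$ to $\bI_K$, it has a presentation as in Proposition \ref{p:DPathPres}, and one lifts it cube-by-cube, reading off from the altitude function on $\tilde K$ which integer second coordinate each cube must carry — the compatibility conditions (c), (d) of Proposition \ref{p:DPathPres} together with the face-map formula $d^\varepsilon(c,h)=(d^\varepsilon_i(c),h+\varepsilon)$ force the second coordinates to match up along the concatenation, so the lift exists and is unique; the final vertex is necessarily $(\bI_K, n)$ where $n=\len(\alpha)$, which is why the target splits as a coproduct over $n$. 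Continuity of the inverse is clear from the local (cube-wise) description. One should also note that any lift automatically lands in the accessible part, since lifted cubes satisfy $(\bO_K,0)\preceq(c,h)\preceq(\bI_K,n)$.

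\emph{Second homeomorphism.} For $\vP(\tilde K)_{(\bO_K,0)}^{(\bI_K,n)} \cong \vP(\tilde K_n)_{\bO_{\tilde K_n}}^{\bI_{\tilde K_n}}$, recall $\tilde K_n = (\tilde K,(\bO_K,0),(\bI_K,n))_{acc}$, so this is an instance of Proposition \ref{p:Acc} applied to the bi-pointed $\square$--set $(\tilde K,(\bO_K,0),(\bI_K,n))$: the inclusion of the accessible part induces a homeomorphism on execution spaces because every d-path from $(\bO_K,0)$ to $(\bI_K,n)$ admits a presentation whose carrier cubes $c_1\preceq\dotsm\preceq c_m$ all satisfy $(\bO_K,0)\preceq c_i\preceq (\bI_K,n)$, hence lie in $\tilde K_n$. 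I would simply cite Proposition \ref{p:Acc}.

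\emph{Functoriality and the main obstacle.} Finally I would remark that both homeomorphisms are natural in $K$ (the constructions use only face maps and the canonical altitude function on $\tilde K$), so the composite is a natural isomorphism $\vP(K)_{\bO_K}^{\bI_K}\cong\coprod_{n\geq 0}\vP(\tilde K_n)_{\bO}^{\bI}$. The only genuinely non-formal step is verifying that the cube-wise lift of a d-path in $|K|$ to $|\tilde K|$ is well-defined — i.e.\ that the integer labels forced on consecutive pieces of the presentation (\ref{e:PathPresentation}) agree on the overlap, so that the pieces glue to an honest d-path in $|\tilde K|$ — and that this is independent of the chosen presentation; this follows because the altitude $|\alt|$ on $|\tilde K|$ restricted to $\alpha$ equals $\alt((\bO_K,0)) + (\text{length travelled}) = \len(\alpha|_{[0,t]})$, which pins down the label of the carrier of $\alpha(t)$ uniquely. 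Everything else is routine unravelling of definitions.
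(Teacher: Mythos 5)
Your proof is correct in substance, and the second homeomorphism is handled exactly as in the paper (by citing Proposition \ref{p:Acc}). The difference is in the first homeomorphism: the paper does not argue it at all but simply quotes \cite[Proposition 5.3]{Raussen-Sim2}, whereas you re-derive it by hand, lifting a d-path cube-by-cube through a presentation as in Proposition \ref{p:DPathPres} and using the altitude on $\tilde K$ to pin down the integer labels. That route is legitimate and has the merit of being self-contained, but it puts the burden of the only delicate point on you, namely continuity of the lifting map $\vP(K)_{\bO}^{\bI}\to\coprod_n\vP(\tilde K)_{(\bO_K,0)}^{(\bI_K,n)}$, which you dismiss as ``clear from the local (cube-wise) description''; since the presentation of Proposition \ref{p:DPathPres} varies discontinuously with the path, this is not quite immediate as stated. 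The cleanest repair is to observe that $|\tilde K|\to|K|$ is an honest covering map (the preimage of the open star of a carrier splits into sheets separated by the altitude, or equivalently $\tilde K\cong K\times_Z\tilde Z$ with $|\tilde Z|\to|Z|$ the infinite cyclic covering), so unique path lifting with continuous dependence on the path applies, d-paths lift to d-paths because the projection is a local isomorphism of d-spaces, and the target component is determined by $\len(\alpha)=n$; this is essentially what the cited result of Raussen packages. So: same skeleton as the paper for the accessibility step, a more explicit but slightly under-justified replacement for the citation in the covering step.
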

\begin{proof}
	This follows from \cite[Proposition 5.3]{Raussen-Sim2} and Proposition \ref{p:Acc}.
\end{proof}

\begin{exa}
	For a finite strictly ordered set $A$ we have
	\[
		\tilde\square^A_n\cong
		\begin{cases}
			\square^A & \text{for $|A|=n$,}\\
			\emptyset & \text{otherwise.}
		\end{cases}
	\]
\end{exa}

\begin{exa}
	For cubes of $\tilde{Z}$ (see Example \ref{x:Z}) we will write $z^k_j$ for $(z^k,j)$. 
	For all $j,k$ we have
	\[
		z^{k}_j = d^0_1(z^{k+1}_j) \preceq z^{k+1}_j \preceq d^1_1(z^{k+1}_j) = z^k_{j+1}.
	\]
	If $0\leq j$ and $j+k\leq n$, then $z^k_j\in\tilde{Z}_n$, since 
	\[
		\bO_{\tilde{Z}_n}=z^0_0\preceq z^1_0\preceq z^0_1\preceq\dotsm \preceq z^0_j\preceq z^k_j\preceq z_{j+k}^0 \preceq z_{j+k}^1\preceq z_{j+k+1}^0 \preceq\dotsm\preceq z_n^0 =\bI_{\tilde{Z}_n}.
	\]
	Otherwise, $z^k_j\not\in \tilde{Z}_n$, since either $\alt(z^k_j)=j<0=\alt(\bO_{\tilde{Z}_n})$ or $\alt(d^1(z^k_j))=\alt(z_{j+k}^0)=j+k>n=\alt(\bI_{\tilde{Z}_n})$.
	Eventually, we have
	\begin{itemize}
	\item
		$\tilde{Z}_n[k]=\{z^k_0,\dotsc,z^k_{n-k}\}$,
	\item
		$d_i^\varepsilon(z^k_j)=z^{k-1}_{j+\varepsilon}$,
	\item
		$\bO_{\tilde{Z}_n}=z^0_0$, $\bI_{\tilde{Z}_n}=z^0_n$.
	\end{itemize}
\end{exa}

\obsol{
Let $\square\Set_*^*(n)\subseteq \square\Set_*^*$ denote the subcategory of accessible $\square$--sets with an altitude function such that $\alt(\bI)=n$.
\begin{prp}
	$\tilde{Z}_n$ is a final object in $\square\Set_*^*(n)$.
\end{prp}
\begin{proof}
	For $K\in\square\Set_*^*(n)$ there is a $\square$--map $f:K\to \tilde{Z}_n$, $f(c)=z^k_{\alt(c)}$,
	and this is the only attitude-preserving $\square$--map.
\end{proof}
}

\tikzset{zerocell/.style={circle, draw, minimum size=0.4cm, inner sep=0pt}}
\begin{figure}
	\centering
\begin{tikzpicture}[>=stealth',every node/.style={minimum size=0.4cm}]
		\node[zerocell] (000) at (0,0) {\scriptsize $0$};
		\node[zerocell] (001) at (3,0) {\scriptsize $1$};
		\node[zerocell] (010) at (0,3) {\scriptsize $1$};
		\node[zerocell, color=black!30!white] (100) at (2,1.4) {\scriptsize $1$};
		\node[zerocell] (110) at (2,4.4) {\scriptsize $2$};
		\node[zerocell] (101) at (5,1.4) {\scriptsize $2$};
		\node[zerocell] (011) at (3,3) {\scriptsize $2$};
		\node[zerocell] (111) at (5,4.4) {\scriptsize $3$};
		\path (000) edge (001) edge (010) edge[color=black!30!white] (100);
		\path (001) edge (011) edge (101);
		\path (010) edge (011) edge (110);
		\path (100) edge[color=black!30!white] (101) edge[color=black!30!white] (110);
		\path (111) edge (011) edge (101) edge (110);
		\node[below] at (1.5,0) {\scriptsize $0$};
		\node[left] at (0,1.5) {\scriptsize $0$};
		\node[above, color=black!30!white] at (1,0.7) {\scriptsize $0$};
		\node[below] at (4, 0.7)  {\scriptsize $1$};
		\node[left] at (3, 1.7)  {\scriptsize $1$};
		\node[above] at (1.5,3)  {\scriptsize $1$};
		\node[above] at (1,3.7)  {\scriptsize $1$};
		\node[left, color=black!30!white] at (2,2.7)  {\scriptsize $1$};
		\node[above, color=black!30!white] at (3.5,1.4)  {\scriptsize $1$};
		\node[above] at (3.5,4.4) {\scriptsize $2$};
		\node[right] at (5,2.9) {\scriptsize $2$};
		\node[above] at (4,3.7) {\scriptsize $2$};
	\end{tikzpicture}
	\caption{The geometric realization of $\tilde{Z}_3$. The vertices and the edges labeled with the same number are identified, as well as the three initial faces (containing the vertex 0) and the three final faces (containing the vertex 3).}
\end{figure}
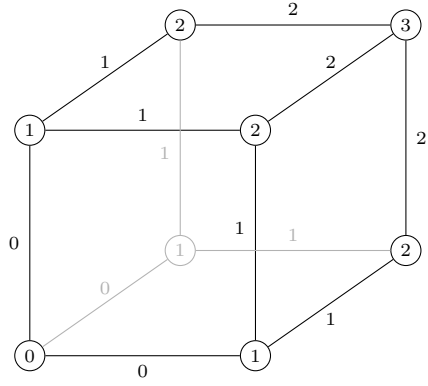

\subsection*{The ``covering" $\square$--set $Y^A$}

\newcommand\YCell[3]{(#1\;\parallel\; #2 \; \parallel\; #3)}

Let $A$ be a set having $n$ elements. Define a $\square$--set  $Y^A$ in the following way:
\begin{itemize}
\item
	$Y^A[k]$ is the set of pairs $(c,{<})$, where $c:A\to \{0,*,1\}$ is a function such that $|c^{-1}(*)|=k$ and ${<}$ is a total strict order on $c^{-1}(*)$.
\item
	If $(c,{<})\in Y^A[k]$ and $c^{-1}(*)=\{a_1<\dots< a_k\}$, then $d^\varepsilon_i(c,{<})=(c',{<}')$, where
\[
	c'(j)=
	\begin{cases}
		\varepsilon & \text{for $j=a_i$}\\
		c(j) & \text{otherwise,}
	\end{cases}
\]		
	and ${<}'$ is the restriction of ${<}$ to $(c')^{-1}(*)=c^{-1}(*)\setminus \{a_i\}$.
\item
	$\bO_{Y^A}=(\bO,\emptyset)$ and $\bI_{Y^A}=(\bI,\emptyset)$, where $\bO(a)=0$, $\bI(a)=1$ for all $a\in A$.
\end{itemize}
The  $\square$--set $Y^A$ is accessible and admits an altitude function $\alt(c,{<})=|c^{-1}(1)|$.

A $\square$--map $Y^A\to \tilde{Z}_n$ defined by
\begin{equation}\label{e:YZProj}
	p_A:Y^A[k]\ni (c,{<}) \mapsto z^k_{\alt(c,{<})}\in \tilde{Z}_n[k].
\end{equation}
is the only bi-pointed map from $Y^A$ to $\tilde{Z}_n$ (since the altitude of cubes must be preserved).

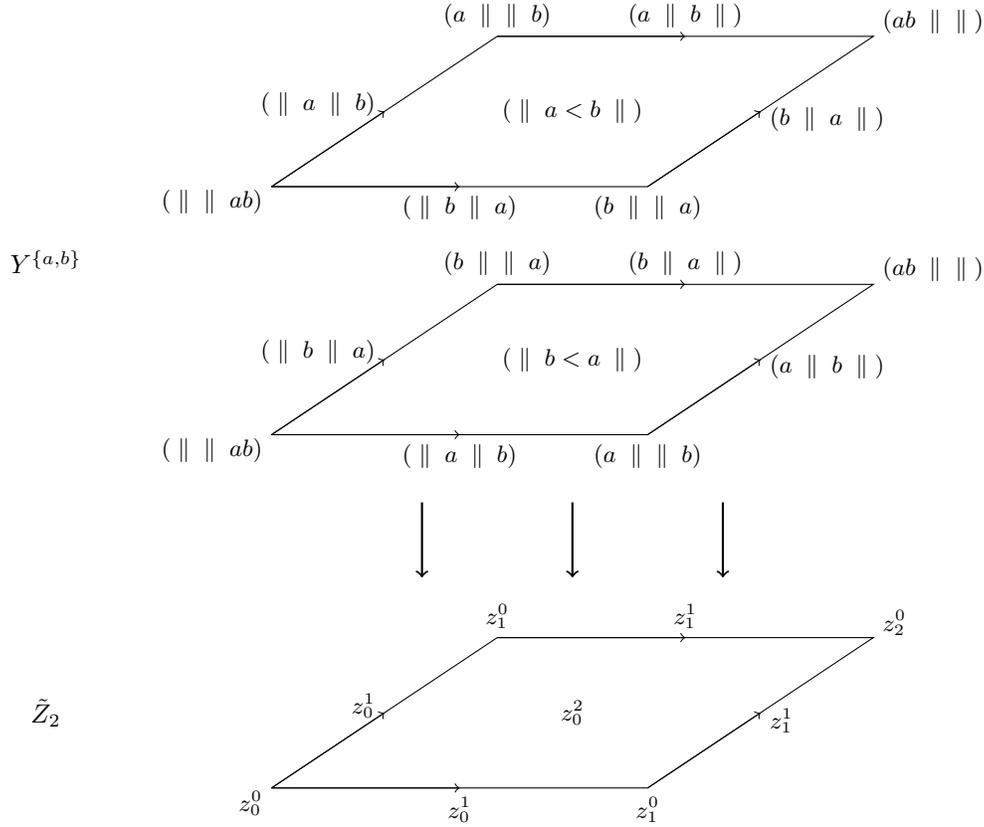
\begin{figure}
\begin{tikzpicture}
	\def\y{6};
	\draw (0,\y+0) -- (5,\y+0) -- (8,\y+2) -- (3,\y+2) -- (0,\y+0);
	\draw[->] (0,\y+0)--(2.5,\y+0);
	\draw[->] (0,\y+0)--(1.5,\y+1);
	\draw[->] (3,\y+2)--(5.5,\y+2);
	\draw[->] (5,\y+0)--(6.5,\y+1);
	\node[below] at(2.5,\y+0) {\small $\YCell{}{b}{a}$};
	\node[above] at(5.5,\y+2) {\small $\YCell{a}{b}{}$};
	\node[left] at(1.5,\y+1.1) {\small $\YCell{}{a}{b}$};
	\node[right] at(6.5,\y+0.9) {\small $\YCell{b}{a}{}$};
	\node at(4,\y+1) {\small $\YCell{}{a<b}{}$};	
	\node[left] at(0,\y-0.2) {\small $\YCell{}{}{ab}$};
	\node[right] at(8,\y+2.2) {\small $\YCell{ab}{}{}$};
	\node[above] at(3,\y+2) {\small $\YCell{a}{}{b}$};
	\node[below] at(5,\y+0) {\small $\YCell{b}{}{a}$};
	\def\y{2.7}
	\draw (0,\y+0) -- (5,\y+0) -- (8,\y+2) -- (3,\y+2) -- (0,\y+0);
	\draw[->] (0,\y+0)--(2.5,\y+0);
	\draw[->] (0,\y+0)--(1.5,\y+1);
	\draw[->] (3,\y+2)--(5.5,\y+2);
	\draw[->] (5,\y+0)--(6.5,\y+1);
	\node[below] at(2.5,\y+0) {\small $\YCell{}{a}{b}$};
	\node[above] at(5.5,\y+2) {\small $\YCell{b}{a}{}$};
	\node[left] at(1.5,\y+1.1) {\small $\YCell{}{b}{a}$};
	\node[right] at(6.5,\y+0.9) {\small $\YCell{a}{b}{}$};
	\node at(4,\y+1) {\small $\YCell{}{b<a}{}$};	
	\node[left] at(0,\y-0.2) {\small $\YCell{}{}{ab}$};
	\node[right] at(8,\y+2.2) {\small $\YCell{ab}{}{}$};
	\node[above] at(3,\y+2) {\small $\YCell{b}{}{a}$};
	\node[below] at(5,\y+0) {\small $\YCell{a}{}{b}$};
	\def\y{-2}
	\draw (0,\y+0) -- (5,\y+0) -- (8,\y+2) -- (3,\y+2) -- (0,\y+0);
	\draw[->] (0,\y+0)--(2.5,\y+0);
	\draw[->] (0,\y+0)--(1.5,\y+1);
	\draw[->] (3,\y+2)--(5.5,\y+2);
	\draw[->] (5,\y+0)--(6.5,\y+1);
	\node[below] at(2.5,\y+0) {\small $z^1_0$};
	\node[above] at(5.5,\y+2) {\small $z^1_1$};
	\node[left] at(1.5,\y+1.1) {\small $z^1_0$};
	\node[right] at(6.5,\y+0.9) {\small $z^1_1$};
	\node at(4,\y+1) {\small $z^2_0$};	
	\node[left] at(0,\y-0.2) {\small $z^0_0$};
	\node[right] at(8,\y+2.2) {\small $z^0_2$};
	\node[above] at(3,\y+2) {\small $z^0_1$};
	\node[below] at(5,\y+0) {\small $z^0_1$};
	\node at(-3,5) {$Y^{\{a,b\}}$};
	\node at(-3,-1) {$\tilde{Z}_2$};
	\draw[->,thick] (4,1.8)--(4,0.8);
	\draw[->,thick] (2,1.8)--(2,0.8);
	\draw[->,thick] (6,1.8)--(6,0.8);
\end{tikzpicture}
\caption{The map $|Y^{\{a,b\}}|\to|\tilde{Z}_2|$. 
The cells having the same labels are identified.}
\end{figure}

The symmetric group $\Sigma_A$ acts on $Y^A$ on the right by $(c,<)\sigma=(c\circ \sigma, {<}\sigma)$, where $a({<}\sigma)b$ iff $\sigma(a)<\sigma(b)$. 
Since both the initial $\bO_{Y^A}$ and the final vertex $\bI_{Y^A}$ are fixed points,
then $\Sigma_A$ acts on $Y^A$ regarded as a bi-pointed $\square$--set.
This action induces a $\Sigma_A$--action on $|Y^A|$ and then, on $\vP(Y^A)_\bO^\bI$.

\begin{prp}\label{p:OrbitMapIsIso}
	The $\square$--map $Y^A\to \tilde{Z}_n$ is $\Sigma_A$--invariant. 
	The induced orbit $\square$--map $Y^A/\Sigma_A \to \tilde{Z}_n$ is an isomorphism.
\end{prp}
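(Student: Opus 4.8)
The plan is to verify the two assertions of Proposition \ref{p:OrbitMapIsIso} directly from the definitions of $Y^A$, $\tilde{Z}_n$, the action of $\Sigma_A$, and the map $p_A$ in (\ref{e:YZProj}). First I would check $\Sigma_A$-invariance: for $(c,{<})\in Y^A[k]$ and $\sigma\in\Sigma_A$ we have $(c,{<})\sigma=(c\circ\sigma,{<}\sigma)$, and since $\sigma$ is a bijection of $A$ one has $(c\circ\sigma)^{-1}(1)=\sigma^{-1}(c^{-1}(1))$, so $\alt((c,{<})\sigma)=|(c\circ\sigma)^{-1}(1)|=|c^{-1}(1)|=\alt(c,{<})$; likewise $|(c\circ\sigma)^{-1}(*)|=|c^{-1}(*)|=k$. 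Hence $p_A((c,{<})\sigma)=z^k_{\alt(c,{<})}=p_A(c,{<})$, which is exactly $\Sigma_A$-invariance of the $\square$-map. (Alternatively, this is immediate because $p_A$ is \emph{the} unique bi-pointed $\square$-map $Y^A\to\tilde{Z}_n$, as noted after (\ref{e:YZProj}), and precomposition with the $\square$-automorphism induced by $\sigma$ again yields a bi-pointed $\square$-map, hence the same one.) Invariance gives a factorization through the orbit $\square$-set, i.e. a bi-pointed $\square$-map $\bar p_A:Y^A/\Sigma_A\to\tilde{Z}_n$, and it remains to show $\bar p_A$ is an isomorphism, which means showing that $\bar p_A[k]$ is a bijection on $k$-cubes for every $k$.

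For surjectivity, fix $k$ and a cube $z^k_j\in\tilde{Z}_n[k]$ with $0\le j\le n-k$. Choose any subset $S\subseteq A$ with $|S|=j$ and any subset $T\subseteq A\setminus S$ with $|T|=k$, define $c$ by $c|_S\equiv 1$, $c|_T\equiv *$, $c|_{A\setminus(S\cup T)}\equiv 0$, and pick any total order ${<}$ on $T=c^{-1}(*)$. Then $(c,{<})\in Y^A[k]$ and $\alt(c,{<})=|S|=j$, so $p_A(c,{<})=z^k_j$; this is possible precisely because $j+k\le n=|A|$, matching the cube set $\tilde{Z}_n[k]=\{z^k_0,\dots,z^k_{n-k}\}$. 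For injectivity, I would show that two cubes $(c,{<}),(c',{<}')\in Y^A[k]$ with $p_A(c,{<})=p_A(c',{<}')$ lie in the same $\Sigma_A$-orbit. The hypothesis says $|c^{-1}(1)|=|(c')^{-1}(1)|$ and $|c^{-1}(*)|=|(c')^{-1}(*)|=k$ (hence also $|c^{-1}(0)|=|(c')^{-1}(0)|$ since $|A|$ is fixed). Pick a bijection $\tau:A\to A$ carrying $c^{-1}(0)$ to $(c')^{-1}(0)$, $c^{-1}(1)$ to $(c')^{-1}(1)$, and carrying $c^{-1}(*)$ to $(c')^{-1}(*)$ \emph{order-preservingly} (from $({<})$ to $({<}')$), which is possible as both are total orders on sets of the same finite cardinality. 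Then $c'\circ\tau = c$ and ${<}'\tau = {<}$, i.e. $(c',{<}')\tau = (c,{<})$, so the two cubes are in the same orbit. Since $\bar p_A$ is a bi-pointed $\square$-map that is bijective in every dimension, it is an isomorphism of bi-pointed $\square$-sets.

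The routine bookkeeping aside, the one point that needs slight care — and which I view as the only real content — is injectivity: one must observe that a $k$-cube of $Y^A$ records exactly the partition of $A$ into the three fibers $c^{-1}(0), c^{-1}(1), c^{-1}(*)$ together with a total order on the last fiber, and that $p_A$ remembers only the three cardinalities; so the fibers of $p_A$ are exactly single $\Sigma_A$-orbits, because $\Sigma_A$ acts transitively on triples of disjoint subsets of $A$ of prescribed sizes, equipped with a total order on the distinguished third block. Everything else (compatibility of $\bar p_A$ with face maps, bi-pointedness, that the orbit map is the coequalizer of the action) is formal and follows from the construction of $Y^A/\Sigma_A$ as a colimit in $\square\Set_*^*$.
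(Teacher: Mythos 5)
Your proposal is correct and follows essentially the same route as the paper: invariance is immediate since $p_A$ only records the dimension and the altitude $|c^{-1}(1)|$, and injectivity of the orbit map is shown exactly as in the paper, by observing that equality of the cardinalities of the three fibers $c^{-1}(0),c^{-1}(*),c^{-1}(1)$ lets one build a permutation matching the fibers and the total orders on the $*$-fibers, so each fiber of $p_A$ is a single $\Sigma_A$-orbit. Your explicit surjectivity check and the remark on factoring through the quotient are details the paper leaves implicit, but they do not constitute a different argument.
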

\begin{proof}
	The first statement is obvious.
	If $p_A(c,{<})=z^k_j=p_A(c',{<}')$, then $c^{-1}(\varepsilon)$ and $(c')^{-1}(\varepsilon)$ are equipotent for $\varepsilon\in\{0,*,1\}$.
	Thus, there exists a permutation $\sigma\in\Sigma_A$ such that $c\circ \sigma=c'$ and $\sigma$ agrees with the orders on $c^{-1}(*)$ and $(c')^{-1}(*)$.
\end{proof}

The $n$--cubes of $Y^A$ are in 1--1 correspondence with total strict orders $<$ on $A$.
For every such order $<$ there is a $\square$--map
\begin{equation}\label{e:OrderIncl}
	\square^{(A,{<})}\ni c \mapsto (c, {<}|_{c^{-1}(*)})\in Y^A,
\end{equation}
which is injective.
The images of these maps, for all possible choices of strict orders on $A$, cover $Y^A$.
Applying Proposition \ref{p:NSLCov} and Lemma \ref{l:ChNSL} we obtain:

\begin{prp}\label{p:YANonSelfLinked}
	$Y^A$ is non-self-linked, and its cube chain category $\Ch(Y^A)$ is a poset.\qed
\end{prp}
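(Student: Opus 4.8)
The plan is to deduce the non-self-linkedness of $Y^A$ from the covering by standard cubes via Proposition \ref{p:NSLCov}, and then to read off that $\Ch(Y^A)$ is a poset from Lemma \ref{l:ChNSL}(a); both ingredients are already in place, so the argument is essentially a verification of the hypotheses of Proposition \ref{p:NSLCov}. The only point not covered by the text immediately preceding the statement is that each $\square^{(A,{<})}$ is itself non-self-linked, so I would begin there: a $\square$-map $\square^m\to\square^B$ (for $B$ a finite strictly ordered set) equals $\iota_c$ for a unique $c\in\square^B[m]$, and writing $c^{-1}(*)=\{b_1<\dots<b_m\}$ one checks from the definition of the face maps in $\square^B$ that $\iota_c$ sends an $r$-cube $e$ of $\square^m$ to the function that agrees with $c$ on $B\setminus c^{-1}(*)$ and sends $b_j\mapsto e(j)$. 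This assignment is manifestly injective in $e$, so every $\iota_c$ is injective; hence each standard cube, and in particular each $\square^{(A,{<})}$ occurring in \eqref{e:OrderIncl}, is non-self-linked.

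Next I would check the hypotheses of Proposition \ref{p:NSLCov} with $K=Y^A$ and the $L_j$ ranging over the family $\{\square^{(A,{<})}\}$ indexed by total strict orders $<$ on $A$. The maps \eqref{e:OrderIncl} are injective $\square$-maps: injectivity is immediate since $(c,{<}|_{c^{-1}(*)})=(c',{<}'|_{(c')^{-1}(*)})$ forces $c=c'$ as functions; and that they commute with face maps is the observation that restricting $<$ and then setting a coordinate to $\varepsilon$ gives the same result as setting the coordinate to $\varepsilon$ and then restricting. Their images cover $Y^A$, because any $(c,\prec)\in Y^A[k]$ is the image of $c$ under the map attached to any total strict order on $A$ extending $\prec$ (such an extension exists since $\prec$ is a total order on the subset $c^{-1}(*)$). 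Therefore Proposition \ref{p:NSLCov} applies and $Y^A$ is non-self-linked, and then Lemma \ref{l:ChNSL}(a) gives that $\Ch(Y^A)$ is a poset.

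I do not expect any genuine obstacle here; the argument is a direct application of two results proved earlier, and the only step not handled verbatim in the text — that standard cubes are non-self-linked — reduces to the explicit formula for $\square$-maps into $\square^B$. Everything else is bookkeeping with the already-established injectivity and covering properties of \eqref{e:OrderIncl}.
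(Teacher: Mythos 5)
Your proposal is correct and follows exactly the route the paper intends: the statement is deduced from Proposition \ref{p:NSLCov} applied to the injective maps \eqref{e:OrderIncl} covering $Y^A$, together with Lemma \ref{l:ChNSL}(a). The only detail you add beyond the paper's (essentially omitted) proof is the explicit verification that standard cubes are non-self-linked and that \eqref{e:OrderIncl} is an injective $\square$--map with covering images, all of which is accurate.
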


The following statement is elementary and will be used later on.
\begin{lem}\label{l:YAFace}
	Let $(c_1,{<_1})\in Y^A[k_1]$, $(c_2,{<_2})\in Y^A[k_2]$. The following conditions are equivalent.
	\begin{itemize}
	\item
			$(c_1,{<_1})$ is a face of $(c_2,{<_2})$,
		\item
			$c_1^{-1}(*)\subseteq c_2^{-1}(*)$, $c_2^{-1}(0)\subseteq c_1^{-1}(0)$, $c_2^{-1}(1)\subseteq c_1^{-1}(1)$, and ${<_1}={<_2}|_{c_1^{-1}(*)}$.\qed		
	\end{itemize}
\end{lem}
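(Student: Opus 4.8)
The plan is to unwind the definition of \emph{face} from \eqref{e:FacePresentation} and simply track the effect of a single face map $d^\varepsilon_i$ on the data $(c,{<})$. For the forward implication, I would note that $d^\varepsilon_i$ sends $(c,{<})$ to $(c',{<}')$ where $c'$ differs from $c$ only by reassigning one coordinate $a\in c^{-1}(*)$ to the value $\varepsilon\in\{0,1\}$, and ${<}'={<}|_{(c')^{-1}(*)}$. Consequently, along any composite $d^{\varepsilon_1}_{i_1}\dotsm d^{\varepsilon_r}_{i_r}$ the $*$-support can only shrink, the $0$-support and the $1$-support can only grow, and the resulting order is the restriction of the original one to the surviving $*$-coordinates (a composite of restrictions is again a restriction). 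Applying this to a face presentation of $(c_1,{<_1})$ inside $(c_2,{<_2})$ yields all four stated inclusions/equalities simultaneously.

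For the converse I would assume the four conditions and reconstruct a face presentation explicitly. First, from $c_1^{-1}(*)\subseteq c_2^{-1}(*)$ together with $c_2^{-1}(0)\subseteq c_1^{-1}(0)$ and $c_2^{-1}(1)\subseteq c_1^{-1}(1)$ one checks that $c_1(a)=c_2(a)$ for every $a\in A\setminus c_2^{-1}(*)$; hence $c_1$ is obtained from $c_2$ by changing, for each $a$ in the finite set $S:=c_2^{-1}(*)\setminus c_1^{-1}(*)$, the value at $a$ from $*$ to $c_1(a)\in\{0,1\}$, and leaving all other coordinates unchanged. Enumerate $S=\{b_1,\dots,b_r\}$ in any order, set $\varepsilon_j:=c_1(b_j)$, and define inductively $(e_0,{\prec_0})=(c_2,{<_2})$ and $(e_j,{\prec_j})=d^{\varepsilon_j}_{i_j}(e_{j-1},{\prec_{j-1}})$, where $i_j$ is the position of $b_j$ among $e_{j-1}^{-1}(*)$ listed in $\prec_{j-1}$-increasing order. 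This is well defined because $b_j$ starts out as a $*$-coordinate of $c_2$ and has not been modified at any earlier stage (the $b$'s being distinct). After the $r$ steps one has $e_r^{-1}(*)=c_2^{-1}(*)\setminus S=c_1^{-1}(*)$, $e_r(b_j)=\varepsilon_j=c_1(b_j)$, and $e_r(a)=c_2(a)=c_1(a)$ for $a\notin c_2^{-1}(*)$, so $e_r=c_1$; moreover $\prec_r={<_2}|_{c_1^{-1}(*)}={<_1}$ by hypothesis. Thus $(c_1,{<_1})=d^{\varepsilon_r}_{i_r}\dotsm d^{\varepsilon_1}_{i_1}(c_2,{<_2})$, which exhibits $(c_1,{<_1})$ as a face of $(c_2,{<_2})$.

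The argument is purely combinatorial and I do not expect a genuine obstacle; the only point needing a moment's care is the bookkeeping of the face-map indices $i_j$, which must be computed relative to the current cube $e_{j-1}$ and its current order $\prec_{j-1}$ rather than relative to the original $c_2$. Once this is made precise, both implications are routine.
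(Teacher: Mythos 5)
Your argument is correct: the forward direction by tracking how a single face map shrinks the $*$-support, grows the $0$- and $1$-supports, and restricts the order, and the converse by inductively applying face maps to the elements of $c_2^{-1}(*)\setminus c_1^{-1}(*)$ (with the index $i_j$ computed in the current cube), is exactly the routine verification the paper leaves to the reader, since it states the lemma as elementary with no written proof. Nothing is missing; your care about recomputing the indices relative to $e_{j-1}$ is the only subtle point and you handle it properly.
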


For a cell $(c,{<})\in Y^A[k]$, $c^{-1}(*)=\{a_1<\dotsm< a_k\}$, we will use the notation 
\begin{equation}\label{e:YCellNotation}
	\YCell{c^{-1}(1)}{a_1<\dotsm<a_k}{c^{-1}(0)}.
\end{equation}
Note that $\bO_{Y^A}=\YCell{\emptyset}{-}{A}$, $\bI_{Y^A}=\YCell{A}{-}{\emptyset}$,
\begin{align}\label{e:dYAO}
	d^0_V(\YCell{A_1}{a_1<\dotsm<a_k}{A_0})
	&=
	\YCell{A_1}{a_{i(1)}<\dotsm<a_{i(m)}}{A_0\cup \{a_j\}_{j\in V}}\\
	d^1_V(\YCell{A_1}{a_1<\dotsm<a_k}{A_0}) \label{e:dYAI}
	&=
	\YCell{A_1\cup \{a_j\}_{j\in V}}{a_{i(1)}<\dotsm<a_{i(m)}}{A_0}
\end{align}
for $\YCell{A_1}{a_1<\dotsm<a_k}{A_0}\in Y^A[k]$, $V\subseteq \{1,\dotsc,k\}$, and
\[
	\{1,\dotsc,k\}\setminus V=\{i(1)<\dotsm<i(m)\}.
\]
For $\sigma\in \Sigma_A$ we have
\begin{equation}
	\YCell{A_1}{a_1<\dotsm<a_k}{A_0}\sigma
	=
	\YCell{\sigma^{-1}(A_1)}{\sigma^{-1}(a_1)<\dotsm<\sigma^{-1}(a_k)}{\sigma^{-1}(A_0)}.
\end{equation}

For every $a\in A$ there is a ``valuation" d-map $v_a:|Y^A|\to \vI$ given by
\begin{equation}\label{e:VMap}
	v_a([\YCell{A_1}{a_1<\dotsm<a_k}{A_0}; x_1,\dotsc,x_k])=
	\begin{cases}
		0 & \text{for $a\in A_0$,}\\
		x_j & \text{for $a=a_j$,}\\
		1 & \text{for $a\in A_1$.}
	\end{cases}
\end{equation}
The product $v:|Y^A|\to \vI^A$ of these maps is a bi-pointed $\Sigma_A$--equivariant d-map,
which is not the geometric realization of a $\square$--map.

\obsol{
}

\section{On covering $\vP(Y^A)\to\vP(Z)$}

Fix a set $A$ having $n$ elements. 
The main goal of this section is the following:

\begin{prp}\label{p:MainCov}
 The $\square$--map $p_A:Y^A\to \tilde{Z}_n$ induces a $\Sigma_A$--principal bundle
\[
	\pi:\vP(Y^A)_\bO^\bI\to  \vP(\tilde{Z}_n)_\bO^\bI.
\] 
\end{prp}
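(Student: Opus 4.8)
The plan is to show that $\pi$ is a fiber bundle with discrete fiber $\Sigma_A$ and structure group $\Sigma_A$, by exhibiting local trivializations and checking compatibility. Since $\vP(Y^A)_\bO^\bI$ and $\vP(\tilde Z_n)_\bO^\bI$ admit natural parametrizations by Proposition~\ref{p:NatProd}, it suffices to work with the spaces of natural d-paths $\vN_{[0,n]}(Y^A)_\bO^\bI$ and $\vN_{[0,n]}(\tilde Z_n)_\bO^\bI$; the map $p_A$ preserves altitude, hence it induces a $\Sigma_A$-equivariant map $\pi$ on these natural path spaces as well, and the claim for $\vP$ follows by taking the product with $\vP([0,n])_0^n$.

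First I would establish that $\pi$ is surjective and that $\Sigma_A$ acts freely on $\vN_{[0,n]}(Y^A)_\bO^\bI$ with orbits exactly the fibers of $\pi$. Surjectivity follows since any natural d-path $\gamma$ in $\tilde Z_n$ can be lifted cube-by-cube using a presentation as in Proposition~\ref{p:DPathPres}: at each moment the carrier is some $z^k_j$, and one chooses a compatible total order on the $k$ ``active'' coordinates; the valuation map $v:|Y^A|\to\vI^A$ of~(\ref{e:VMap}) records, at each time $t$, the tuple $(v_a(\alpha(t)))_{a\in A}$ of coordinate values, and a lift is determined by the bijection between $A$ and the coordinate slots that is forced once we track which point of $A$ carries which value. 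Freeness: if $\alpha\sigma=\alpha$ for a natural d-path $\alpha$ and $\sigma\neq\id$, pick $a\neq b$ with $\sigma(a)=b$; since $\alpha$ goes from $\bO$ to $\bI$, the values $v_a\alpha$ and $v_b\alpha$ both rise from $0$ to $1$, and the $\square$-structure of $Y^A$ forces them to be strictly ordered on every open cube of positive dimension the path passes through, giving $v_a\alpha\neq v_b\alpha$ on a nonempty open interval — but $\sigma$-invariance would swap them, a contradiction. That $\Sigma_A$-orbits are exactly the fibers of $\pi$ is Proposition~\ref{p:OrbitMapIsIso} applied pointwise: two lifts of the same d-path differ by a permutation matching the orders on the $*$-coordinates at every time.

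Next I would construct local trivializations. Given $\gamma_0\in\vN_{[0,n]}(\tilde Z_n)_\bO^\bI$, the idea is that nearby natural d-paths $\gamma$ visit essentially the same sequence of cubes $z^{k}_{j}$, and for such $\gamma$ a lift is rigidly determined by the combinatorial data of a single lift of $\gamma_0$ together with continuity. Concretely, fix a lift $\tilde\gamma_0$; for $\gamma$ in a small neighborhood $U$ of $\gamma_0$, define $s_{\tilde\gamma_0}(\gamma)$ to be the unique lift of $\gamma$ that ``agrees combinatorially'' with $\tilde\gamma_0$ — at each time $t$ where $\gamma_0$ has carrier $z^k_j$ with active coordinates ordered in a way recorded by $\tilde\gamma_0$, the lift $s_{\tilde\gamma_0}(\gamma)(t)$ uses the order on $A$'s active elements inherited by transporting along the path from the values $v_a(\gamma(t))$. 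Since the $*$-coordinate values of a natural d-path never collide except instantaneously at cube faces (where the ordering is consistently inherited by restriction, per Lemma~\ref{l:YAFace}), this $s_{\tilde\gamma_0}$ is well-defined and continuous on $U$, and $\pi\circ s_{\tilde\gamma_0}=\id_U$. Then $U\times\Sigma_A\to\pi^{-1}(U)$, $(\gamma,\sigma)\mapsto s_{\tilde\gamma_0}(\gamma)\sigma$, is the desired local trivialization: it is a continuous bijection by freeness and the orbit description, and continuity of the inverse follows because on $\pi^{-1}(U)$ the $\Sigma_A$-coordinate of $\alpha$ is the locally constant function comparing the order on active $A$-elements of $\alpha$ with that of $s_{\tilde\gamma_0}(\pi(\alpha))$. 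On overlaps $U\cap U'$ the two sections $s_{\tilde\gamma_0}$, $s_{\tilde\gamma_0'}$ differ by a locally constant $\Sigma_A$-valued function (the transition cocycle), so the structure group is $\Sigma_A$ acting by right translation and $\pi$ is a principal $\Sigma_A$-bundle.

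The main obstacle I expect is making the ``combinatorially determined lift'' $s_{\tilde\gamma_0}$ precise and proving its continuity, since a natural d-path can have infinitely many subdivision points and its carrier sequence is only upper-semicontinuous as $\gamma$ varies (a neighboring path may skip some low-dimensional cubes or insert degenerate stops). The clean way around this is to avoid referring to the carrier sequence at all and instead define the lift directly from the valuation map: the composite $v\circ\alpha:[0,n]\to\vI^A$ of a lift with the valuation of~(\ref{e:VMap}) depends only on $\gamma=\pi(\alpha)$ (since $v$ factors through $|\tilde Z_n|$ after forgetting labels — more precisely $v$ is $\Sigma_A$-equivariant and the unordered tuple of values is $\pi$-invariant), and a lift $\alpha$ is recovered from $\gamma$ together with the bijection, at each time, between $A$ and the ``value slots''. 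One shows this bijection is determined and continuous using that natural d-paths in $Y^A$ have the property that for each pair $a\neq b$ the sign of $v_a\alpha-v_b\alpha$ is eventually constant and changes only through $0$ at face-crossings where the order is inherited — so the local sheet is pinned down by a finite amount of combinatorial data on any compact time interval, and an elementary compactness argument upgrades this to continuity of $s_{\tilde\gamma_0}$ on a neighborhood of $\gamma_0$.
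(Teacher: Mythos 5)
There is a genuine gap, and it sits exactly where the real difficulty of this proposition lies: the claim that the fibers of $\pi$ are single $\Sigma_A$--orbits. You dispose of it by saying it is ``Proposition~\ref{p:OrbitMapIsIso} applied pointwise: two lifts of the same d-path differ by a permutation matching the orders on the $*$-coordinates at every time.'' Pointwise, Proposition~\ref{p:OrbitMapIsIso} only gives, for each $t$, \emph{some} permutation $\sigma_t$ with $\carr(\alpha'(t))=\carr(\alpha(t))\sigma_t$; these permutations are far from unique (at a vertex of $\tilde Z_n$ the carrier imposes no constraint at all, and on a $k$-cube they are only determined on the $k$ active elements), and nothing in your argument shows that a single $\sigma$ works for all $t$. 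Producing that single global permutation is precisely the content that the paper spends most of Section~3 on: it first shows every d-path in $Y^A$ and in $\tilde Z_n$ admits a one-cube presentation $[c;\beta]$ with $c$ top-dimensional (Propositions~\ref{p:YPres} and~\ref{p:ZPres}, the latter needing Lemma~\ref{l:FaceSwap}), then analyses when two such presentations give the same path via the swap relation and sorted representatives (Propositions~\ref{p:ZPresEq} and~\ref{p:YPresEq}), and only then deduces Proposition~\ref{p:PiInj}. Your proposal contains no substitute for this analysis, and without it neither the fiber description nor the injectivity of the would-be local trivialization is established.

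The second problem is that the mechanisms you invoke to make lifts ``rigid'' are false in $Y^A$. The values $v_a\alpha$ and $v_b\alpha$ of distinct elements need not be distinct, let alone ``strictly ordered on every open cube'': the order recorded in a cube $\YCell{A_1}{a_1<\dotsm<a_k}{A_0}$ is combinatorial data on the slots and puts no constraint on the coordinate values, so e.g.\ a diagonal path has $v_a\alpha\equiv v_b\alpha$. This breaks your freeness argument as stated (freeness is true, but via the carrier argument of Proposition~\ref{p:YActionFree}, not via values), and, more seriously, it breaks the ``clean way around'': the valuation map $v$ of~(\ref{e:VMap}) is not injective, and a lift is \emph{not} recoverable from $\gamma$ together with $v\circ\alpha$ --- the two diagonal d-paths through the two $2$-cubes $\YCell{\emptyset}{a<b}{\emptyset}$ and $\YCell{\emptyset}{b<a}{\emptyset}$ of $Y^{\{a,b\}}$ have identical valuation tuples and identical projections, yet are distinct lifts. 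So the proposed section $s_{\tilde\gamma_0}$ and the claimed local constancy of the $\Sigma_A$-coordinate have no valid construction as written. Note also that the paper never builds local trivializations: it proves freeness, proves that $\pi$ is a quotient map by a compactness argument on natural path spaces (Propositions~\ref{p:NatProd} and~\ref{p:PiIsQuotient}, using surjectivity of the one-cube presentation map), identifies $\vP(\tilde Z_n)_\bO^\bI$ with the orbit space via Proposition~\ref{p:PiInj}, and then uses that the orbit map of a free action of a finite group on a Hausdorff space is automatically a principal bundle; if you want to keep your local-trivialization route, you would still have to prove the global unique-lifting-up-to-$\Sigma_A$ statement first, which is the heart of the matter.
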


\begin{prp}\label{p:YActionFree}
	The action of $\Sigma_A$ on $\vP(Y^A)_\bO^\bI$ is free.
\end{prp}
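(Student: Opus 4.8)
The plan is to exhibit the free action directly by using the valuation d-map $v:|Y^A|\to\vI^A$. Suppose $\alpha\in\vP(Y^A)_\bO^\bI$ is fixed by some $\sigma\in\Sigma_A$, i.e. $\alpha\sigma=\alpha$ as d-paths. First I would compose with $v$ to obtain a d-path $v\circ\alpha:[0,1]\to\vI^A$ whose coordinates are $(v_a\circ\alpha)_{a\in A}$. Since $v$ is $\Sigma_A$--equivariant and the $\Sigma_A$--action on $\vI^A$ permutes coordinates, the equality $\alpha\sigma=\alpha$ forces $v_{\sigma(a)}\circ\alpha=v_a\circ\alpha$ for all $a\in A$; more precisely, $v\circ(\alpha\sigma)=\sigma^{-1}\cdot(v\circ\alpha)$ wherever I am careful about left/right conventions, so all coordinate functions indexed by an orbit of $\langle\sigma\rangle$ coincide.

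The key step is then to show that the coordinate functions $v_a\circ\alpha$ are pairwise distinct for distinct $a$, which immediately gives $\sigma=\id$. For this I would use the presentation of $\alpha$ from Proposition \ref{p:DPathPres}, writing $\alpha$ as a concatenation of pieces $[\,\YCell{A_1^{(i)}}{a_1<\dotsm<a_{k_i}}{A_0^{(i)}}\,;\beta_i\,]$. Since $\alpha$ runs from $\bO=\YCell{\emptyset}{-}{A}$ to $\bI=\YCell{A}{-}{\emptyset}$, every element of $A$ starts with valuation $0$ and ends with valuation $1$; moreover, within each piece the coordinate $v_a$ can only strictly increase while $a$ lies in the ``$*$-block'' of the carrier, and during that time the strict orders ${<_i}$ on the $*$-blocks are compatible (they restrict to one another along faces, by Lemma \ref{l:YAFace}). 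Consequently, if $a\neq b$ both pass from $0$ to $1$, there is a first piece in which both lie in the $*$-block simultaneously, and in that piece the fixed strict order, say $a<b$, together with the formula (\ref{e:VMap}) and the fact that the $\beta_i$ have strictly increasing coordinates, forces $v_a\circ\alpha$ and $v_b\circ\alpha$ to differ on an open subinterval — one of them is still $0$ (or already $1$) while the other is strictly between, or else their common $*$-block values are ordered by the strict order. Hence $v_a\circ\alpha\neq v_b\circ\alpha$.

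The main obstacle is the bookkeeping in this last step: I must argue that for $a\neq b$ the functions $v_a\circ\alpha,v_b\circ\alpha$ genuinely separate, handling the possibility that $a$ and $b$ are never in the $*$-block of the same carrier. In that degenerate case, I would observe that if $a$ reaches valuation $1$ (enters some $A_1^{(i)}$) before $b$ ever leaves $A_0$, then on an interval $v_a\circ\alpha\equiv 1$ while $v_b\circ\alpha\equiv 0$, so again they differ; the remaining orderings of the ``$a$ passes $0\to 1$'' and ``$b$ passes $0\to 1$'' events are symmetric. In all cases $v_a\circ\alpha\neq v_b\circ\alpha$, so the map $a\mapsto v_a\circ\alpha$ is injective; combined with $v_{\sigma(a)}\circ\alpha=v_a\circ\alpha$ this yields $\sigma(a)=a$ for all $a$, i.e. $\sigma=\id$. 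Therefore the action of $\Sigma_A$ on $\vP(Y^A)_\bO^\bI$ is free.
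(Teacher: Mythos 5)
Your reduction to the valuation map breaks at the step you yourself identify as key: the claim that the coordinate functions $v_a\circ\alpha$ are pairwise distinct for distinct $a\in A$ is simply false. Take $A=\{a,b\}$, the top cube $c=\YCell{\emptyset}{a<b}{\emptyset}\in Y^A[2]$, and the diagonal d-path $\alpha(t)=[c;(t,t)]$, which runs from $\bO$ to $\bI$. Then $v_a\circ\alpha=v_b\circ\alpha=\id_{[0,1]}$, even though both elements spend the whole time in the $*$-block of the same carrier; nothing in Proposition \ref{p:DPathPres}, Lemma \ref{l:YAFace} or formula (\ref{e:VMap}) prevents two coordinates of a $\beta_i$ from coinciding as functions (d-paths in $\vI^n$ only have nondecreasing coordinates, and even strict monotonicity would not separate equal coordinates). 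So the map $a\mapsto v_a\circ\alpha$ need not be injective, and your argument cannot conclude $\sigma=\id$ from $v_{\sigma(a)}\circ\alpha=v_a\circ\alpha$ alone. The underlying reason is that $v:|Y^A|\to\vI^A$ forgets exactly the datum that distinguishes the $\Sigma_A$-translates of a cube, namely the total order on the $*$-block: in the example above, $\alpha$ and $\alpha\sigma$ (for the transposition $\sigma=(a\,b)$) have the same image under $v$ but lie in the distinct cubes $\YCell{\emptyset}{a<b}{\emptyset}$ and $\YCell{\emptyset}{b<a}{\emptyset}$, so freeness holds while your separation criterion fails.

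The fix is to use the carrier rather than the valuation values: this is what the paper does. Given $\alpha=\alpha\sigma$ and $a\in A$, the d-path $v_a\circ\alpha$ goes from $0$ to $1$, so there is $t$ with $0<v_a(\alpha(t))<1$; at such $t$ the element $a$ occupies a definite position $j$ in the ordered $*$-block of $\carr(\alpha(t))=\YCell{A_1}{a_1<\dotsm<a_k}{A_0}$, i.e.\ $a=a_j$. Since $\carr(\alpha(t))=\carr((\alpha\sigma)(t))=\carr(\alpha(t))\sigma=\YCell{\sigma^{-1}(A_1)}{\sigma^{-1}(a_1)<\dotsm<\sigma^{-1}(a_k)}{\sigma^{-1}(A_0)}$, comparing the $j$-th entries of the ordered $*$-blocks gives $\sigma^{-1}(a)=a$; as $a$ was arbitrary, $\sigma=\id$. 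Your first reduction (equivariance of $v$) is fine, but the conclusion must be drawn from the equality of carriers, not from a purported injectivity of $a\mapsto v_a\circ\alpha$.
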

\begin{proof}
	Let $\alpha\in\vP(Y^A)_\bO^\bI$,
	$\sigma\in \Sigma_A$
	and assume that $\alpha=\alpha\sigma$.
	For $a\in A$, the composition $v_a\circ \alpha$ is a d-path from $0$ to $1$ in $\vI$,
	and there exists $t\in (0,1)$ such that $0<v_a(\alpha(t))<1$.
	Hence there is $j\in\{1\dotsc,k\}$ such that $a=a_j$,
	where 
	$
		\carr(\alpha(t))=\YCell{A_1}{a_1<\dotsm<a_k}{A_0}.
	$
	But 	
	\[
		\carr(\alpha(t))=\carr((\alpha\sigma)(t))=\carr(\alpha(t))\sigma=\YCell{\sigma^{-1}(A_1)}{\sigma^{-1}(a_1)<\dotsm<\sigma^{-1}(a_k)}{\sigma^{-1}(A_0)}.
	\]
	Eventually, $a=a_j=\sigma^{-1}(a_j)=\sigma^{-1}(a)$ for all $a\in A$.
\end{proof}

It remains to show that $\pi$ is a quotient map and that the preimages of points of $\vP(\tilde{Z}_n)$ are single orbits.
We will study the commutative diagram
\begin{equation}\label{e:CovDiag}
	\begin{diagram}
		\node{Y^A[n]\times \vP(\vI^n)_\bO^\bI}
			\arrow[3]{e,t}{\mu:(c,\beta)\mapsto [c;\beta]}
			\arrow{s,l}{\varrho:(c,\beta)\mapsto \beta}
		\node{}
		\node{}
		\node{\vP(Y^A)_\bO^\bI}
			\arrow{s,r}{\pi}
	\\
		\node{\vP(\vI^n)_\bO^\bI}
			\arrow[3]{e,t}{\nu:\beta\mapsto [z^n_0;\beta]}
		\node{}
		\node{}
		\node{\vP(\tilde{Z}_n)_\bO^\bI}
	\end{diagram}
\end{equation}

The left-hand spaces can be regarded as spaces of presentations (\ref{e:PathPresentation}) of d-paths belonging to the corresponding right-hand spaces.
This is a diagram of $\Sigma_A$--spaces; the action is free on the top spaces and trivial on the bottom spaces.

\begin{prp}\label{p:YPres}
	The map $\mu$ is surjective, ie, 
	every d-path $\alpha\in \vP(Y^A)_\bO^\bI$ has a presentation having the form $[c;\beta]$, where $c\in Y^A[n]$, $\beta\in\vP(\vI^n)_\bO^\bI$.
\end{prp}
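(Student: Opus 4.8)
The plan is to start from a presentation of $\alpha$ of the form (\ref{e:PathPresentation}) provided by Proposition \ref{p:DPathPres},
\[
	\alpha = [c_1;\beta_1]\overset{t_1}{*}[c_2;\beta_2]\overset{t_2}{*}\dotsm\overset{t_{m-1}}{*}[c_m;\beta_m],
	\qquad c_i=(\bar c_i,{<_i})\in Y^A[n_i],
\]
and to amalgamate consecutive pieces one at a time, so that after $m-1$ steps a single piece $[c;\beta]$ remains. Write $S_i=\bar c_i^{-1}(*)$, a set of size $n_i$ carrying the total order ${<_i}$.

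For the amalgamation step I consider $[c_1;\beta_1]\overset{t_1}{*}[c_2;\beta_2]$ together with the junction data of Proposition \ref{p:DPathPres}(c)--(d): $d^1_{B_1}(c_1)=d^0_{A_2}(c_2)$ and $\beta_1(t_1)=\delta^1_{B_1}(\bx_1)$, $\beta_2(t_1)=\delta^0_{A_2}(\bx_1)$. Unwinding the face formulas (\ref{e:dYAO})--(\ref{e:dYAI}), the cube $d^1_{B_1}(c_1)=d^0_{A_2}(c_2)$ has $*$--set $S:=S_1\cap S_2$, the orders ${<_1}$ and ${<_2}$ restrict to the same order on $S$, and the elements of $S_1\setminus S_2$ (resp.\ $S_2\setminus S_1$) are exactly the coordinates pushed to $1$ by $\delta^1_{B_1}$ (resp.\ to $0$ by $\delta^0_{A_2}$). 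I then set $c'=(\bar c',{<'})$ with $(\bar c')^{-1}(*)=S_1\cup S_2$, $(\bar c')^{-1}(1)=\bar c_1^{-1}(1)$, $(\bar c')^{-1}(0)=\bar c_2^{-1}(0)$, and take ${<'}$ to be any total order on $S_1\cup S_2$ extending ${<_1}\cup{<_2}$; such an order exists by the elementary fact that two total orders on $S_1$ and $S_2$ agreeing on $S_1\cap S_2$ admit a common linear extension to $S_1\cup S_2$ (a shortest cycle in the union would necessarily alternate between the two orders, forcing all its elements into $S_1\cap S_2$, which is absurd). The amalgamated path $\beta'$, with coordinates indexed by $S_1\cup S_2$, is defined on the first piece by holding the $(S_2\setminus S_1)$--coordinates at $0$ and letting the $S_1$--coordinates follow $\beta_1$, and on the second piece by holding the $(S_1\setminus S_2)$--coordinates at $1$ and letting the $S_2$--coordinates follow $\beta_2$; the two descriptions agree at $t_1$ on the common $S$--coordinates by the junction data, so $\beta'\in\vP(\vI^{|S_1\cup S_2|})$. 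Using $d^0_{S_2\setminus S_1}(c')=c_1$ and $d^1_{S_1\setminus S_2}(c')=c_2$ together with the identity $[d^\varepsilon_V(c),\bx]=[c,\delta^\varepsilon_V(\bx)]$, one checks piecewise that $[c';\beta']=[c_1;\beta_1]\overset{t_1}{*}[c_2;\beta_2]$.

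To iterate, I must verify that the junction between the new piece $[c';\beta']$ and the next old piece $[c_3;\beta_3]$ again has the form required by Proposition \ref{p:DPathPres}(c)--(d). This follows by composing face maps: since $d^1_{S_1\setminus S_2}(c')=c_2$, applying $d^1_{B_2}$ gives $d^1_{B'}(c')=d^1_{B_2}(c_2)=d^0_{A_3}(c_3)$ for the set $B'$ of positions (in the order ${<'}$) of $(S_1\setminus S_2)$ together with the $c_2$--coordinates indexed by $B_2$; and, evaluating $\beta'$ at $t_2$, one reads off $\beta'(t_2)=\delta^1_{B'}(\bx_2)$ and $\beta_3(t_2)=\delta^0_{A_3}(\bx_2)$. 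Hence the induction proceeds, and after $m-1$ amalgamations we obtain $\alpha=[c;\beta]$ with $c=(\bar c,{<})$, $\bar c^{-1}(*)=S_1\cup\dotsm\cup S_m$, and $\beta\in\vP(\vI^{|S_1\cup\dots\cup S_m|})$. Finally, for every $a\in A$ the composite $v_a\circ\alpha$ is a d-path in $\vI$ from $0$ to $1$, so $v_a(\alpha(t))\in(0,1)$ on a nondegenerate interval of times; that interval meets the interior of some piece, whence $a\in S_i$ for that $i$. Therefore $S_1\cup\dotsm\cup S_m=A$, so $c\in Y^A[n]$; and since $[c;\beta]=\alpha$ runs from $\bO_{Y^A}$ to $\bI_{Y^A}$ one gets $\beta\in\vP(\vI^n)_\bO^\bI$, as claimed.

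The part I expect to be most delicate is the bookkeeping in the amalgamation step: correctly identifying the index sets $B_1,A_2$ (and later $B',A_3$) with the decomposition of $S_1\cup S_2$ into its ``already finished'', ``currently active'' and ``not yet started'' parts, confirming that $d^0_{S_2\setminus S_1}(c')=c_1$, $d^1_{S_1\setminus S_2}(c')=c_2$, and checking that the inherited data really satisfies conditions (c)--(d) at each stage so that the induction can continue. The order-theoretic fact used to build ${<'}$ is routine but deserves to be recorded explicitly as a lemma.
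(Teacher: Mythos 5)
Your proposal is correct and is essentially the paper's own argument: both merge two adjacent pieces of a presentation from Proposition \ref{p:DPathPres} into one piece over the cube whose $*$--set is $S_1\cup S_2$, equipped with a common linear extension of ${<_1}$ and ${<_2}$, re-expressing each old piece as a face of this larger cube. The only differences are cosmetic — the paper runs the merging once on a presentation of minimal length and derives a contradiction (using non-self-linkedness of $Y^A$ to get the precubical identity $d^1_V d^0_{W'}=d^0_W d^1_{V'}$ where you instead match coordinates at $t_1$ directly), while you iterate the merge and explicitly re-verify the junction data, and you make explicit the existence of the common order extension, which the paper leaves implicit.
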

\begin{proof}
Let 
\begin{equation}
		\alpha = [c_1;\beta_1]\overset{t_1}{*}[c_2;\beta_2]\overset{t_2}{*}\dotsm\overset{t_{m-1}}{*}[c_m;\beta_m]
\end{equation}
be a presentation from Proposition \ref{p:DPathPres} with a minimal possible $m$.
We will assume that $m>1$ and obtain a contradiction.

Denote $p=\dim(c_1)$, $q=\dim(c_2)$.
Conditions (c) and (d) in Proposition \ref{p:DPathPres} imply that
there exist $V\subseteq \{1,\dotsc,p\}$, $W\subseteq \{1,\dotsc,q\}$ and $\bx\in\vI^{p-|V|}=\vI^{q-|W|}$
such that
\[
d^1_{V}(c_1)=d^0_{W}(c_2),\qquad
\beta_1(t_1)=\delta^1_{V}(\bx), \qquad
\beta_2(t_1)=\delta^0_{W}(\bx).
\]
Moreover, $d^0(c_1)=\bO$.
Denote $r=p-|V|=q-|W|$, $s=p+|W|=q+|V|$, $c''=d^1_{V}(c_1)=d^0_{W}(c_2)$.

Using equations (\ref{e:dYAO}) and (\ref{e:dYAI}) we obtain that for some subsets $B_1,B_2\subseteq A$ we have
\begin{align*}
	c_1&=(\emptyset\;\parallel\; b^1_1<\dotsm<b^1_p\;\parallel\; A\setminus B_1)\in Y^A[p]\\
	c_2&=( B_1\setminus B_2 \;\parallel\; b^2_1<\dotsm<b^2_q \;\parallel\; A\setminus (B_1\cup B_2))\in Y^A[q]\\
	c''&=(B_1\setminus B_2\;\parallel\; e_1<\dotsm<e_r\;\parallel\;A\setminus B_1)\in Y^A[r],
\end{align*}
where
\[
	B_1=\{b^1_1,\dotsc,b^1_p\},\;B_2=\{b^2_1,\dotsc,b^2_q\},\; B_1\cap B_2=\{e_1,\dotsc,e_r\},
\]
\[
	V=\{i\;|\; b^1_i\not\in B_2\}, \quad W=\{i\;|\; b^2_i\not\in B_1\},
\]
and the order on $B_1\cap B_2$ is the common restriction of the orders on $B_1$ and $B_2$.
Let
\[
	c'=(\emptyset\;\parallel\; a_1<\dotsm<a_s\;\parallel\;A\setminus(B_1\cup B_2))\in Y^A[s]
\]
where $B_1\cup B_2=\{a_1,\dotsc,a_s\}$ and the order above is a common extension of the orders on $B_1$ and $B_2$;
this extension is not necessarily unique.
Let 
\[
	V'=\{i\;|\; a_i\not\in B_2\},\quad W'=\{i\;|\;a_i\not\in B_1\}.
\]
Obviously $c_1=d^0_{W'}(c')$ and $c_2=d^1_{V'}(c')$; thus
\begin{align*}
	[c_1;\beta_1(t)]&=[d^0_{W'}(c');\beta_1(t)]=[c';\delta^0_{W'}(\beta_1(t))]\quad  \text{for $t\in[0,t_1]$},\\
	[c_2;\beta_2(t)]&=[d^1_{V'}(c');\beta_2(t)]=[c';\delta^1_{V'}(\beta_2(t))]\quad  \text{for $t\in[t_1,t_2]$}.
\end{align*}
We have 
\[
	d^1_V d^0_{W'}(c')=d^1_V(c_1)=c''=d^0_W(c_2)= d^0_W d^1_{V'}(c').
\]
Since $Y^A$ is non-self-linked, $d^1_V d^0_{W'}=d^0_W d^1_{V'}$ is a precubical identity and thus, $\delta^0_{W'} \delta^1_V =\delta^1_{V'} \delta^0_W$.
As a consequence,
\[
	\delta^0_{W'}(\beta_1(t_1))=
	\delta^0_{W'}(\delta^1_V(\bx))=	
	\delta^1_{V'}(\delta^0_W(\bx))=
	\delta^1_{V'}(\beta_2(t_1)).
\]
Thus, $\alpha$ admits a shorter presentation
\[
	\alpha=[c'; (\delta^0_{W'}\beta_1)\overset{t_1}{*} (\delta^1_{V'}\beta_2)]\overset{t_2}{*}\dotsm\overset{t_{m-1}}{*}[c_m;\beta_m].\qedhere
\]
\end{proof}

\begin{rem}
	There is an alternative way to prove Proposition \ref{p:YPres}.
	One can show that for every $\alpha\in\vP(Y^A)_\bO^\bI$
	there exists a total order $<$ on $A$ that extends the order on $\carr(\alpha(t))^{-1}(*)$ for every $t\in[0,1]$.
	For such an order we have $\alpha=[(\const_*,<);v\circ \alpha]$.
	This argument seems more intuitive but we are not able to find a similar proof for $\tilde{Z}^n$.
\end{rem}

To prove a similar statement for the bottom map $\nu$ we need the following.
\begin{lem}\label{l:FaceSwap}
	Assume that $p,q\geq 0$, $V\subseteq\{1,\dotsc,p\}$, $W\subseteq\{1,\dotsc,q\}$ and $p-\left|V\right|=q-|W|$.
	Then there exist $V',W'\subseteq \{1,\dots,s\}$, $s=p+|W|=q+|V|$, $|V'|=|V|$, $|W|=|W'|$,
	for which $d^1_V d^0_{W'}=d^0_W d^1_{V'}$ is a precubical identity.
\end{lem}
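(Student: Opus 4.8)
The plan is to reduce the assertion to an explicit combinatorial interleaving problem and then to check the resulting precubical identity in the universal cube $\square^s$. First recall that for any $c\in K[s]$ we have $c=\iota_c(u_s)$ and $\iota_c$ commutes with the face maps; hence a composite of face maps $K[s]\to K[r]$ is determined, as a precubical operator, by its effect on the top cube $u_s$ of $\square^s$. In $\square^s$ a straightforward ``largest index first'' bookkeeping (the same used in the excerpt for $d^\varepsilon_A$) shows that, whenever $V'\cap W'=\emptyset$, the operator $d^1_V d^0_{W'}$ sends $u_s$ to the function $A\to\{0,*,1\}$, $A=\{1,\dots,s\}$, equal to $0$ on $W'$, equal to $1$ on $\phi(V)$, and $*$ elsewhere, where $\phi\colon\{1,\dots,p\}\to\{1,\dots,s\}\setminus W'$ is the order isomorphism. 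Symmetrically $d^0_W d^1_{V'}(u_s)$ is $1$ on $V'$, $0$ on $\psi(W)$ and $*$ elsewhere, with $\psi\colon\{1,\dots,q\}\to\{1,\dots,s\}\setminus V'$ the order isomorphism. So it suffices to produce disjoint $V',W'\subseteq\{1,\dots,s\}$ with $|V'|=|V|$, $|W'|=|W|$, $\phi(V)=V'$ and $\psi(W)=W'$.

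The construction I would use: set $r=p-|V|=q-|W|$, write $\{1,\dots,p\}\setminus V=\{\bar v_1<\dots<\bar v_r\}$ and $\{1,\dots,q\}\setminus W=\{\bar w_1<\dots<\bar w_r\}$, and let $\alpha_i$ (for $0\le i\le r$) be the number of elements of $V$ lying strictly between $\bar v_i$ and $\bar v_{i+1}$ (with $\bar v_0=0$, $\bar v_{r+1}=p+1$), and similarly $\beta_i$ for $W$; thus $\sum_i\alpha_i=|V|$ and $\sum_i\beta_i=|W|$. Form the word of length $r+|V|+|W|=s$ over $\{I,O,N\}$
\[
	I^{\alpha_0}O^{\beta_0}\,N\,I^{\alpha_1}O^{\beta_1}\,N\,\dots\,N\,I^{\alpha_r}O^{\beta_r},
\]
and let $V'$, $W'$ be the sets of positions occupied by the letters $I$, resp.\ $O$. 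These are disjoint of the required sizes. Deleting the $O$'s leaves $I^{\alpha_0}N I^{\alpha_1}N\dots N I^{\alpha_r}$, a word of length $p$ in which the $N$'s sit at positions $\bar v_1,\dots,\bar v_r$ by the definition of the $\alpha_i$, so the $I$'s sit at the positions $V$; this is exactly the statement $\phi(V)=V'$. By the symmetric computation (delete the $I$'s) one gets $\psi(W)=W'$.

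With these $V',W'$ the two functions above coincide, so $d^1_V d^0_{W'}(u_s)=d^0_W d^1_{V'}(u_s)$ in $\square^s$; applying $\iota_c$ gives $d^1_V d^0_{W'}=d^0_W d^1_{V'}$ on every precubical set, i.e.\ a precubical identity (and hence also $\delta^0_{W'}\delta^1_V=\delta^1_{V'}\delta^0_W$ on geometric realizations, as used after Proposition~\ref{p:DPathPres}). I do not expect a genuine obstacle here: the only real content is guessing the interleaving word, and the only place requiring care is keeping the three index conventions — the sets $\{1,\dots,p\}$, $\{1,\dots,q\}$, $\{1,\dots,s\}$ and the reindexings $\phi,\psi$ — consistent, together with the elementary verification that each single face map $d^1_{v_j}$ in $d^1_V=d^1_{v_1}\cdots d^1_{v_k}$ indeed hits the intended original coordinate, which is exactly the standard bookkeeping already invoked in the paper.
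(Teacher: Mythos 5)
Your construction is correct, but it is a genuinely different argument from the paper's. The paper proves the lemma by induction on $s$ with a three-case analysis (whether $q\in W$, whether $p\in V$, or neither), in each case peeling off the largest index and commuting a single $d^0_s$ past $d^1_{V''}$ using the generating relations; this keeps everything syntactic, so the conclusion ``is a precubical identity'' is immediate, but it produces $V'$ and $W'$ only implicitly. You instead build $V'$ and $W'$ explicitly as the $I$- and $O$-positions of the shuffle word $I^{\alpha_0}O^{\beta_0}N\cdots N I^{\alpha_r}O^{\beta_r}$, and verify the identity semantically by evaluating both composites on the top cube $u_s$ of $\square^s$ and transporting the equality to an arbitrary $\square$--set via the canonical maps $\iota_c$ (a Yoneda-type step). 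That evaluation — $d^1_V d^0_{W'}(u_s)$ is $0$ on $W'$, $1$ on $\phi(V)$, $*$ elsewhere — is the standard largest-index-first bookkeeping and is correct, as is your deletion argument showing $\phi(V)=V'$ and $\psi(W)=W'$; the cardinalities and the length count $s=r+|V|+|W|$ all check out, and your closing remark that the dual identity $\delta^0_{W'}\delta^1_V=\delta^1_{V'}\delta^0_W$ follows is exactly what is needed downstream in the proofs of Propositions \ref{p:YPres} and \ref{p:ZPres}. Two small blemishes: the hypothesis ``whenever $V'\cap W'=\emptyset$'' in your description of $d^1_V d^0_{W'}(u_s)$ is superfluous (the formula involves only $V$, $W'$ and $\phi$, and disjointness of $\phi(V)$ and $W'$ is automatic), and if one insists that ``precubical identity'' means derivability from the relations $d^\varepsilon_i d^\eta_j=d^\eta_{j-1}d^\varepsilon_i$ rather than equality of operators on all $\square$--sets, you should note these agree (equality on the representable $\square^s$ suffices, as your own $\iota_c$ argument shows). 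What your route buys is an explicit formula for $V',W'$ and a one-shot, non-inductive verification; what the paper's induction buys is brevity and staying entirely inside the formal calculus of face maps.
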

\begin{proof}
	Induction with respect to $s$.
	It is obvious if either $p=0$ or $q=0$ so we assume otherwise.
	Consider the following cases.
	\begin{enumerate}
	\item
		$q\in W$.
		There exist $V'',W''\subseteq \{1,\dotsc,s-1\}$ such that $d^1_V d^0_{W''}=d^0_{W\setminus\{q\}} d^1_{V''}$. Let $V'=V''$, $W'=W''\cup\{s\}$. Then
		\[
			d^1_V d^0_{W'}
			=
			d^1_V d^0_{W''}d^0_s
			=
			d^0_{W\setminus\{q\}} d^1_{V''}d^0_s
			=
			d^0_{W\setminus\{q\}} d^0_{s-|V''|} d^1_{V''}
			=
			d^0_{W\setminus\{q\}} d^0_{q} d^1_{V'}
			=
			d^0_{W} d^1_{V'}.			
		\]
	\item
		$q\not\in W$ and $p\in V$.
		The argument is similar.
	\item
		$s>0$, $q\not\in W$, $p\not\in V$.
		Then $V\subseteq \{1,\dotsc,p-1\}$, $W\subseteq \{1,\dotsc,q-1\}$, so there exist $V',W'\subseteq \{1,\dots,s-1\}$ satisfying the required property.\qedhere
	\end{enumerate}
\end{proof}

\begin{prp}\label{p:ZPres}
	The map $\nu$ {\normalfont (\ref{e:CovDiag})} is surjective, ie, 
	every d-path $\alpha\in \vP(\tilde{Z}_n)_\bO^\bI$ has a presentation having the form $[z^n_0;\beta]$ where $\beta\in\vP(\vI^n)_\bO^\bI$.
\end{prp}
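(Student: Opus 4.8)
The plan is to mimic the proof of Proposition \ref{p:YPres}, but now for the final precubical set $\tilde{Z}_n$ instead of $Y^A$. The issue is that $\tilde{Z}_n$ is not non-self-linked, so the precise argument there (``$d^1_Vd^0_{W'}=d^0_Wd^1_{V'}$ is a precubical identity'') is not available; this is where Lemma \ref{l:FaceSwap} enters as a replacement, supplying the relevant precubical identity combinatorially rather than deducing it from injectivity.

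First I would take a presentation
\[
	\alpha = [c_1;\beta_1]\overset{t_1}{*}[c_2;\beta_2]\overset{t_2}{*}\dotsm\overset{t_{m-1}}{*}[c_m;\beta_m]
\]
from Proposition \ref{p:DPathPres} with $m$ minimal, and argue by contradiction that $m>1$ leads to a shorter presentation. As in Proposition \ref{p:YPres}, set $p=\dim(c_1)$, $q=\dim(c_2)$, so $c_1=z^p_{j_1}$, $c_2=z^q_{j_2}$ for suitable $j_1,j_2$; conditions (c) and (d) give $V\subseteq\{1,\dotsc,p\}$, $W\subseteq\{1,\dotsc,q\}$ and $\bx\in\vI^{p-|V|}=\vI^{q-|W|}$ with $d^1_V(c_1)=d^0_W(c_2)$, $\beta_1(t_1)=\delta^1_V(\bx)$, $\beta_2(t_1)=\delta^0_W(\bx)$. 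Since $\alpha$ starts at $\bO$, $d^0(c_1)=z^0_0$, so in fact $c_1=z^p_0$. Now I would apply Lemma \ref{l:FaceSwap} with this $p,q,V,W$ (note $p-|V|=q-|W|$ holds) to obtain $V',W'\subseteq\{1,\dotsc,s\}$, $s=p+|W|=q+|V|$, with $|V'|=|V|$, $|W'|=|W|$, for which $d^1_Vd^0_{W'}=d^0_Wd^1_{V'}$ is a precubical identity. Set $c'=z^s_0\in\tilde{Z}_n[s]$ (it lies in $\tilde{Z}_n$ because $s\le p+q\le n$, since $z^p_0$ and $z^q_{p}$ must both be accessible). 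By the face formula $d^\varepsilon_i(z^k_j)=z^{k-1}_{j+\varepsilon}$ in $\tilde{Z}_n$, one computes $d^0_{W'}(c')=z^{s-|W'|}_0=z^p_0=c_1$ and $d^1_{V'}(c')=z^{s-|V'|}_{|V'|}=z^q_{p}=c_2$, matching the altitudes. Then, exactly as in Proposition \ref{p:YPres},
\[
	[c_1;\beta_1(t)]=[c';\delta^0_{W'}(\beta_1(t))],\qquad
	[c_2;\beta_2(t)]=[c';\delta^1_{V'}(\beta_2(t))],
\]
and the precubical identity $d^1_Vd^0_{W'}=d^0_Wd^1_{V'}$ yields $\delta^0_{W'}\delta^1_V=\delta^1_{V'}\delta^0_W$, hence
\[
	\delta^0_{W'}(\beta_1(t_1))=\delta^0_{W'}(\delta^1_V(\bx))=\delta^1_{V'}(\delta^0_W(\bx))=\delta^1_{V'}(\beta_2(t_1)),
\]
so $\alpha=[c';(\delta^0_{W'}\beta_1)\overset{t_1}{*}(\delta^1_{V'}\beta_2)]\overset{t_2}{*}\dotsm\overset{t_{m-1}}{*}[c_m;\beta_m]$, a presentation with fewer pieces, contradicting minimality. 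Therefore $m=1$, i.e.\ $\alpha=[c_1;\beta_1]$ with $c_1=z^n_0$ (its dimension must be $n$ since $\len(\alpha)=n$ and $d^0(c_1)=z^0_0$, $d^1(c_1)=z^0_n$), completing the proof.

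The main obstacle, and the reason $\tilde{Z}_n$ is handled separately from $Y^A$, is precisely the failure of non-self-linkedness: I cannot conclude that $d^1_Vd^0_{W'}$ equals $d^0_Wd^1_{V'}$ merely from the fact that these compositions agree on $c'$. Lemma \ref{l:FaceSwap} circumvents this by constructing $V',W'$ for which the identity holds at the level of face operators (a statement about indices, independent of the $\square$-set). The remaining bookkeeping — verifying that $c'=z^s_0$ is accessible in $\tilde{Z}_n$, checking the altitude computations $d^0_{W'}(z^s_0)=z^p_0$ and $d^1_{V'}(z^s_0)=z^q_p$, and confirming these agree with $c_1$ and $c_2$ — is routine given the explicit description $d^\varepsilon_i(z^k_j)=z^{k-1}_{j+\varepsilon}$ of the face maps of $\tilde{Z}_n$, and I would state it without belaboring the arithmetic.
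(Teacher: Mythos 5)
Your proof is essentially the paper's own: the paper also proves Proposition \ref{p:ZPres} by rerunning the minimal-presentation argument of Proposition \ref{p:YPres}, with Lemma \ref{l:FaceSwap} supplying the precubical identity $d^1_V d^0_{W'}=d^0_W d^1_{V'}$ (exactly because $\tilde{Z}_n$ is not non-self-linked), and then merging the first two terms into $[z^s_0;(\delta^0_{W'}\beta_1)*(\delta^1_{V'}\beta_2)]$ to contradict minimality. Two slips in your ``routine'' bookkeeping should be corrected, though they do not affect the structure: writing $r=p-|V|=q-|W|$, one has $c_2=z^q_{p-r}=z^q_{|V|}$ rather than $z^q_p$, so the check reads $d^1_{V'}(z^s_0)=z^{s-|V'|}_{|V'|}=z^q_{|V|}=c_2$; and the inequality $p+q\le n$ you invoke for accessibility is false in general (e.g.\ $c_1=z^2_0$, $c_2=z^1_1$ in $\tilde{Z}_2$), but it is not needed, since accessibility of $c_2=z^q_{|V|}$ itself gives $|V|+q=s\le n$, which is exactly what is required for $z^s_0\in\tilde{Z}_n$.
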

\begin{proof}
	We slightly modify the proof of  \ref{p:YPres}. 
	For a minimal presentation $\alpha=[c_1;\beta_1]\overset{t_1}*\dotsm\overset{t_{m-1}}*[c_m;\beta_m]$ such that $m>1$
	 we have $c_1=z^p_0$, $c_2=z^q_{p-r}$
	  and there exist $\bx\in\vI^{r}$, $V\subseteq \{1,\dotsc,p\}$, $W\subseteq \{1,\dotsc,q\}$
	  such that $\delta^1_V(\bx)=\beta_1(t_1)$ and $\delta^0_W(\bx)=\beta_2(t_1)$.
	By Lemma \ref{l:FaceSwap}, we can find subsets $V',W'\subseteq \{1,\dotsc,s\}$, $s=p+q-r$, that satisfy the precubical identity $d^1_V d^0_{W'}=d^0_W d^1_{V'}$.
	As a consequence, the first two terms of the presentation can be merged into $[z^s_0; (\delta^0_{W'}\beta_1)*(\delta^1_{V'}\beta_2)]$: a contradiction.
	Thus, $m=1$ and $c_1=z^n_0$.
\end{proof}

\begin{prp}\label{p:PiIsQuotient}
	The map $\pi:\vP(Y^A)_\bO^\bI\to \vP(\tilde{Z}_n)_\bO^\bI$ is a quotient map.
\end{prp}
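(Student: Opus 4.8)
The plan is to exploit the commutative square \eqref{e:CovDiag} together with the two surjectivity results already established. First I would observe that the left vertical map $\varrho\colon Y^A[n]\times\vP(\vI^n)_\bO^\bI\to\vP(\vI^n)_\bO^\bI$ is an open map: $Y^A[n]$ is a discrete (finite) set, so $\varrho$ is just a finite disjoint union of copies of the identity, hence open, and of course surjective. Moreover the top map $\mu\colon(c,\beta)\mapsto[c;\beta]$ is continuous and surjective by Proposition \ref{p:YPres}, and the bottom map $\nu\colon\beta\mapsto[z^n_0;\beta]$ is continuous and surjective by Proposition \ref{p:ZPres}. The strategy is the standard one for proving that a map is a quotient map: exhibit $\pi$ as a continuous surjection and show that the preimage of any open set is open by transporting openness across the square.

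Concretely, let $U\subseteq\vP(\tilde{Z}_n)_\bO^\bI$ be a set whose preimage $\pi^{-1}(U)\subseteq\vP(Y^A)_\bO^\bI$ is open; I want $U$ itself to be open. Since $\mu$ is surjective, $U=\pi(\pi^{-1}(U))$... but that is the wrong direction; instead I argue as follows. By commutativity, $\nu^{-1}(U)=\varrho(\mu^{-1}(\pi^{-1}(U)))$: indeed $\nu(\beta)\in U$ iff $\pi(\mu(c,\beta))=\nu(\varrho(c,\beta))\in U$ for $(c,\beta)$ with $\varrho(c,\beta)=\beta$, and such a $(c,\beta)$ always exists since $\varrho$ is surjective. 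Now $\mu^{-1}(\pi^{-1}(U))$ is open because $\mu$ is continuous and $\pi^{-1}(U)$ is open, and $\varrho$ is an open map, so $\nu^{-1}(U)$ is open in $\vP(\vI^n)_\bO^\bI$. It remains to deduce that $U$ is open in $\vP(\tilde{Z}_n)_\bO^\bI$ from the fact that $\nu^{-1}(U)$ is open; equivalently, that $\nu$ is itself a quotient map.

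This last point is the main obstacle, and it is genuinely the heart of the proof. To handle it I would invoke Proposition \ref{p:NatProd}: composing with the natural-reparametrization homeomorphism, $\vP(\tilde{Z}_n)_\bO^\bI\cong\vN_{[0,n]}(\tilde{Z}_n)_\bO^\bI\times\vP([0,n])_0^n$ and likewise $\vP(\vI^n)_\bO^\bI\cong\vN_{[0,n]}(\vI^n)_\bO^\bI\times\vP([0,n])_0^n$, and $\nu$ respects this product decomposition since it preserves altitude. So it suffices to show the restricted map on natural paths $\vN_{[0,n]}(\vI^n)_\bO^\bI\to\vN_{[0,n]}(\tilde{Z}_n)_\bO^\bI$ is a quotient map. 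A natural d-path in $|\tilde{Z}_n|$ of length $n$ is determined, coordinate-by-coordinate along its unique ``staircase'' of cells $z^n_0\succeq\dots$, and any such path lifts through $\nu$; the map from the natural path space of the big cube is a closed surjection onto a compact Hausdorff (indeed: the spaces are metrizable, and $\vN_{[0,n]}(\vI^n)_\bO^\bI$ is compact, being a closed subspace of the space of $1$-Lipschitz monotone paths) target, hence a quotient map. If compactness of the natural-path space is not immediate, the fallback is to note that $\vN_{[0,n]}(\vI^n)_\bO^\bI$ is metrizable and $\nu$ restricted to it is proper (preimages of compact sets are compact, as they are closed subsets of the compact source), and a proper continuous surjection onto a locally compact Hausdorff space is closed, hence a quotient map. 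Either way, once $\nu$ is shown to be a quotient map, the displayed chain $\nu^{-1}(U)$ open $\Rightarrow$ $U$ open completes the argument that $\pi$ is a quotient map. $\qed$
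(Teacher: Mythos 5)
Your argument is correct, and its engine is the same as the paper's: the factorization of Proposition \ref{p:NatProd} into natural paths times reparametrizations, compactness of $\vN_{[0,n]}(\vI^n)_\bO^\bI$, and the fact that a continuous surjection from a compact space onto a Hausdorff space is closed, hence a quotient map. The difference is where you apply it. The paper attacks $\pi$ directly: by Proposition \ref{p:YPres} the space $\vN_{[0,n]}(Y^A)_\bO^\bI$ is the continuous image of $Y^A[n]\times\vN_{[0,n]}(\vI^n)_\bO^\bI$, hence compact, so $\pi_N$ is already a closed surjection and no further diagram chase is needed. You instead prove that the bottom map $\nu$ is a quotient map (using compactness of $\vN_{[0,n]}(\vI^n)_\bO^\bI$ only) and then transfer this to $\pi$ via the identity $\nu^{-1}(U)=\varrho\bigl(\mu^{-1}(\pi^{-1}(U))\bigr)$ and openness of the projection $\varrho$; that chase is valid (in fact, since $\beta\mapsto(c_0,\beta)$ is a section of $\varrho$, one can shortcut it: $\nu=\pi\circ\mu\circ s$ with $\nu$ quotient forces $\pi$ quotient). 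So your route trades the compactness of $\vN_{[0,n]}(Y^A)_\bO^\bI$ for an extra, but elementary, step in the square; both proofs also share the same small gloss at the end, namely that the natural-path-level quotient map times the identity on $\vP([0,n])_0^n$ is again a quotient map, which is justified because that map is perfect (compact source, Hausdorff target), so its product with the identity is closed. Two cosmetic points: the paper simply cites \cite[Proposition 9.4]{Z-Cub2} for compactness of $\vN_{[0,n]}(\vI^n)_\bO^\bI$ (your Arzel\`a--Ascoli sketch is fine), and your fallback appeal to local compactness of the target is unnecessary --- the primary compact-source argument suffices.
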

\begin{proof}
	The map $\pi$ is surjective since $\nu$ and $\varrho$ are surjective.

	By Proposition \ref{p:NatProd}, $\pi$ is a product of the map $\pi_N:\vN_{[0,n]}(Y^A)\to \vN_{[0,n]}(\tilde{Z}_n)$ between the spaces of natural paths and the identity on $\vP([0,n])_0^n$. 

	The space $\vN_{[0,n]}(\vI^n)_\bO^\bI$ is compact \cite[Proposition 9.4]{Z-Cub2}, and then  $\vN_{0,n}(Y^A)_\bO^\bI$ is also compact as an image of $Y^A\times \vN_{[0,n]}(\vI^n)_\bO^\bI$ (Proposition \ref{p:YPres}).
	Since $\pi_N$ is a surjective map from a compact space, it is a quotient map and so is $\pi$.
\end{proof}

In the remaining part of this section we show that $\pi^{-1}(\alpha)$ is a single orbit for every $\alpha\in\vP(\tilde{Z}_n)_\bO^\bI$.

\emph{The support} of a d-path $\beta\in \vP(\vI)_0^1$ is the interval
\begin{equation}\label{e:Support}
	\supp(\beta) = \{t\in [0,1]\;|\; 0<\beta(t)<1\}.
\end{equation}
Let $\sim$ be the equivalence relation on $\vP(\vI^n)_\bO^\bI$ spanned by
\begin{equation}\label{e:EqZ}
		(\beta_1,\dotsc,\beta_i,\beta_{i+1},\dotsc,\beta_n)
		\sim 
		(\beta_1,\dotsc,\beta_{i+1},\beta_{i},\dotsc,\beta_n)
\end{equation}
for $\supp(\beta_i)\cap \supp(\beta_{i+1})=\emptyset$.
We say that $\beta=(\beta_1,\dotsc,\beta_n)\in \vP(\vI^n)_\bO^\bI$ is \emph{sorted}
if for every $i$ such that $\supp(\beta_{i})\cap \supp(\beta_{i+1})=\emptyset$, 
$\supp(\beta_i)$ precedes $\supp(\beta_{i+1})$,
ie, the exists $t\in(0,1)$ such that $\beta_i(t)=1$ and $\beta_{i+1}(t)=0$.

\begin{prp}\label{p:ZPresEq}
	Let $\beta,\beta'\in\vP(\vI^n)$.
	\begin{enumerate}[\normalfont (a)]
	\item
		If $\beta\sim \beta'$, then $\nu(\beta)=\nu(\beta')$.
	\item
		For every $\beta\in\vI^n$ there exists $\beta'\in\vP(\vI^n)_0^1$ such that $\beta\sim\beta'$ 
		and $\beta'$ is sorted.
	\item
		If $\nu(\beta)=\nu(\beta')$, then $\beta\sim \beta'$.
	\end{enumerate}
\end{prp}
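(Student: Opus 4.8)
Below is the plan. Throughout I use that the canonical presentation of a point $[z^n_0;y_1,\dots,y_n]\in|\tilde Z_n|$ is obtained by deleting every coordinate equal to $0$ or $1$: if $a$ of the $y_l$ equal $0$ and $b$ of them equal $1$, then $[z^n_0;y_1,\dots,y_n]=[z^{n-a-b}_{b};(y_l)_{y_l\in(0,1)}]$, the surviving coordinates kept in their original order (this is immediate from $d^\varepsilon_i(z^k_j)=z^{k-1}_{j+\varepsilon}$).

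\emph{Part (a).} It suffices to treat one elementary move $\beta'=(\beta_1,\dots,\beta_{i+1},\beta_i,\dots,\beta_n)$ with $\supp(\beta_i)\cap\supp(\beta_{i+1})=\emptyset$, the general case following by transitivity. Fix $t$. Disjointness of the supports forces at most one of $\beta_i(t),\beta_{i+1}(t)$ to lie in $(0,1)$. Swapping these two entries leaves unchanged the number of coordinates equal to $0$, the number equal to $1$, and — because at most one of them is interior — the ordered tuple of the remaining (interior) coordinates. Hence the canonical presentations of $[z^n_0;\beta(t)]$ and $[z^n_0;\beta'(t)]$ coincide for every $t$, i.e.\ $\nu(\beta)=\nu(\beta')$.

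\emph{Part (b).} I would run bubble sort, using the statistic $I(\beta)=\#\{(i,j):i<j,\ \supp(\beta_i)\cap\supp(\beta_j)=\emptyset,\ \supp(\beta_j)\text{ precedes }\supp(\beta_i)\}$. If $\beta$ is not sorted there is an $i$ with $\supp(\beta_i)\cap\supp(\beta_{i+1})=\emptyset$ and $\supp(\beta_i)$ not preceding $\supp(\beta_{i+1})$; since disjoint supports are always comparable, $\supp(\beta_{i+1})$ precedes $\supp(\beta_i)$, so transposing positions $i,i+1$ is an allowed move. After the transposition: the pair $(i,i+1)$ stops being an inversion; pairs disjoint from $\{i,i+1\}$ are untouched; and for each $k\notin\{i,i+1\}$ the two slots $\{k,i\}$ and $\{k,i+1\}$ simply exchange their two functions, so they contribute the same number of inversions as before. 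Thus $I$ decreases by exactly $1$, and since $I\ge0$ the process terminates at a sorted $\beta'\sim\beta$.

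\emph{Part (c).} By (a) and (b) it is enough to prove that two \emph{sorted} $\gamma,\gamma'$ with $\nu(\gamma)=\nu(\gamma')$ are equal (then for arbitrary $\gamma,\gamma'$: sort each to $\gamma_s\sim\gamma$, $\gamma'_s\sim\gamma'$, note $\nu(\gamma_s)=\nu(\gamma)=\nu(\gamma')=\nu(\gamma'_s)$, hence $\gamma_s=\gamma'_s$ and $\gamma\sim\gamma'$). The plan is to reconstruct $\gamma$ from $\nu(\gamma)$. Writing the canonical presentation $\nu(\gamma)(t)=[z^{k(t)}_{b(t)};w_1(t),\dots,w_{k(t)}(t)]$, one has $k(t)=\#\{i:\gamma_i(t)\in(0,1)\}$, $b(t)=\#\{i:\gamma_i(t)=1\}$, and $(w_1(t),\dots,w_{k(t)}(t))$ is the tuple of the active values $\gamma_i(t)$ listed by increasing position $i$; all of this is visible in $\nu(\gamma)$, hence agrees for $\gamma$ and $\gamma'$. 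On any time interval where the carrier $z^k_b$ is constant, $\gamma$ is completely transparent from this data (active coordinates $=$ entries of $w$ in order; the remaining coordinates are $0$'s and $1$'s in the known counts $n-k-b$ and $b$). Scanning $t$ upward from $0$ across the finitely many moments where a coordinate reaches $1$ or leaves $0$ — equivalently, inducting on $n$ by peeling off the coordinate participating in the first such event — the slot of $w$ at which the event happens is visible, and the sorted hypothesis then determines which position in $\{1,\dots,n\}$ the event belongs to. I would do $n\le 1$ trivially and $n=2$ by hand to fix the shape of the induction, using there that $[z^n_0;-]$ is injective on $(0,1)^n$ and that a monotone path cannot move from one facet of $\vI^n$ to another facet glued to it by $[z^n_0;-]$ without first passing through their common lower-dimensional face.

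The main obstacle is exactly this last identification step: the face maps of $\tilde Z_n$ ignore the direction index $i$, so $[z^n_0;-]$ collapses many faces of the cube together, and one must show that, across each start/finish event, the sorted condition together with monotonicity of $\gamma$ leaves precisely one admissible way to attach the active coordinate (whose slot in $w$ is seen) to a cube coordinate. I expect the bookkeeping of this "unique attachment", organized as the inductive step, to be the bulk of the proof; parts (a), (b) and the constant-carrier interpolation are routine.
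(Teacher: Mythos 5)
Parts (a) and (b) of your proposal are correct. Your (a) is the same argument as the paper's, just phrased through the canonical presentation (the paper computes directly with $d^\varepsilon_i(z^n_0)=z^{n-1}_\varepsilon=d^\varepsilon_{i+1}(z^n_0)$), and your bubble-sort argument with the inversion count $I(\beta)$ is a correct, more detailed version of what the paper dismisses as immediate; note in passing that disjoint nonempty supports are indeed always comparable, as you claim, because each $\supp(\beta_i)$ is a nonempty open subinterval of $(0,1)$.

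Part (c), however, is only a plan, and the step you yourself flag as ``the main obstacle'' --- showing that sortedness and monotonicity leave exactly one admissible way to attach each visible event/slot of $w$ to a coordinate of the cube --- is precisely the mathematical content of the statement, so as written there is a genuine gap. The paper avoids the global event-scanning bookkeeping altogether by identifying only the \emph{first} coordinate and inducting on $n$. Concretely: writing the canonical presentation $\alpha(t)=[z^{k(t)}_{l(t)};\gamma_1(t),\dotsc,\gamma_{k(t)}(t)]$, it calls an interval $(s_0,s_1)$ \emph{distinguished} if $\gamma_1$ is defined and increasing on it with limits $0$ at $s_0^+$ and $1$ at $s_1^-$, and checks that the distinguished intervals are exactly the supports $\supp(\beta_k)$ with $\supp(\beta_k)\cap\supp(\beta_l)=\emptyset$ for all $l<k$ (and the same with $\beta'$); sortedness then forces the leftmost distinguished interval to be $\supp(\beta_1)$, hence $\supp(\beta_1)=\supp(\beta'_1)$ and $\beta_1=\beta'_1$ (each is $0$ before, $\gamma_1$ on, and $1$ after this interval). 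Finally Lemma \ref{l:DimRed} strips this common first coordinate, giving $[z^{n-1}_0;\beta_2,\dotsc,\beta_n]=[z^{n-1}_0;\beta'_2,\dotsc,\beta'_n]$, and the inductive hypothesis finishes the proof. To complete your write-up you would either have to carry out the ``unique attachment'' bookkeeping you anticipate (which is delicate, e.g.\ when several coordinates leave $0$ simultaneously), or replace it by this peel-off-one-coordinate reduction; as it stands, (c) is not proved.
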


We need the following lemma:
\begin{lem}\label{l:DimRed}
	Let $y,x_1,\dotsc,x_{n-1},x'_1,\dotsc,x'_{n-1}\in [0,1]$.
	If
	\[
		[z^n_0;y,x_1,\dotsc,x_{n-1}]=[z^n_0;y,x'_1,\dotsc,x'_{n-1}]\in \tilde{Z}_n,
		\]
		then $[z^{n-1}_0;x_1,\dotsc,x_{n-1}]=[z^{n-1}_0;x'_1,\dotsc,x'_{n-1}]\in\tilde{Z}_{n-1}$.
\end{lem}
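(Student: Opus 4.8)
The plan is to understand explicitly when two points of the form $[z^n_0; y, x_1, \dots, x_{n-1}]$ coincide in $\tilde{Z}_n$, and to show that the coincidence is ``independent'' of the first coordinate $y$. Recall from the description of $\tilde{Z}_n$ that $\tilde{Z}_n[k] = \{z^k_0, \dots, z^k_{n-k}\}$ with $d^\varepsilon_i(z^k_j) = z^{k-1}_{j+\varepsilon}$, so the face maps simply delete one starred coordinate and increment the subscript by $\varepsilon$. First I would pass both presentations to their canonical forms: by the rule $[d^\varepsilon_i(c); \bx] \sim [c; \delta^\varepsilon_i(\bx)]$, coordinates equal to $0$ or $1$ can be absorbed into the cube, so $[z^n_0; y, x_1, \dots, x_{n-1}]$ has a canonical presentation $[z^m_h; \bx']$ where $\bx'$ is the subtuple of coordinates strictly between $0$ and $1$, $m$ is the number of such coordinates, and $h$ is the number of coordinates preceding them (in the precubical sense: a coordinate equal to $1$ contributes to the subscript of the cube obtained by $d^1$, a coordinate equal to $0$ does not, but the subscript increments accordingly — I would just track this bookkeeping carefully using $d^\varepsilon_i(z^k_j)=z^{k-1}_{j+\varepsilon}$). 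The key point is that, because in $\tilde{Z}_n$ applying $d^\varepsilon_i$ does not depend on $i$ at all, the canonical cube of $[z^n_0; y, x_1, \dots, x_{n-1}]$ depends only on the \emph{multiset} of values among $\{y, x_1, \dots, x_{n-1}\}$ together with the count of $0$'s and the count of $1$'s, and the canonical coordinate tuple is the values in $(0,1)$ listed in their original order.

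With that in hand, the argument splits into cases on the value of $y$. If $y \in (0,1)$, then in the canonical presentation of $[z^n_0; y, x_1, \dots, x_{n-1}]$ the coordinate $y$ occupies the first slot of $\bx'$ (no coordinate precedes it), so the equality $[z^n_0; y, \bx] = [z^n_0; y, \bx']$ forces the canonical presentations to agree slot-by-slot after the first slot; deleting that first slot is exactly the passage from $z^n_0$-presentations to $z^{n-1}_0$-presentations (remove coordinate $y$, which contributes $0$ to the subscript), giving $[z^{n-1}_0; \bx] = [z^{n-1}_0; \bx']$. If $y = 0$, then $[z^n_0; 0, x_1, \dots, x_{n-1}] = [d^0_1(z^n_0); x_1, \dots, x_{n-1}] = [z^{n-1}_1; x_1, \dots, x_{n-1}]$, and similarly for the primed tuple, so the hypothesis directly gives $[z^{n-1}_1; \bx] = [z^{n-1}_1; \bx']$ in $\tilde{Z}_n$; I then need to deduce $[z^{n-1}_0; \bx] = [z^{n-1}_0; \bx']$ in $\tilde{Z}_{n-1}$, which follows because the two presentations have, by the canonical-form analysis, the same multiset of values, the same number of $0$'s and $1$'s, and the same order of the in-$(0,1)$ coordinates — and these data determine the point in any $\tilde{Z}_N$ for $N$ large enough (here $N = n-1$ suffices since the total coordinate count is $n-1$). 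The case $y = 1$ is symmetric, using $d^1_1$.

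The main obstacle, and the only place where real care is needed, is the bookkeeping in the canonical-form step: making precise the claim that the canonical presentation of $[z^n_0; x_1, \dots, x_n]$ is determined by $(\#\{i : x_i = 0\}, \#\{i: x_i = 1\}, (x_i)_{x_i \in (0,1)})$ with the latter in original order, and in particular that two tuples of the same total length give the same point of $\tilde{Z}_{n}$ iff these data agree. I would prove this once as a small sublemma by inducting on the number of coordinates in $\{0,1\}$, repeatedly applying $[d^\varepsilon_i(z^k_j);\bx]\sim[z^k_j;\delta^\varepsilon_i(\bx)]$ and the identity $d^\varepsilon_i(z^k_j)=z^{k-1}_{j+\varepsilon}$; the forward direction (equal data $\Rightarrow$ equal point) is immediate, and the reverse direction uses that the canonical presentation (all coordinates in $(0,1)$) is unique, which is exactly the uniqueness of canonical presentations recorded after the definition of geometric realization. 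Once this sublemma is in place, all three cases above are routine, and reindexing from $\tilde{Z}_n$ down to $\tilde{Z}_{n-1}$ is harmless because the relevant cubes $z^m_h$ with $m + (\text{number of } 1\text{'s counted in } h) \le n-1$ already lie in $\tilde{Z}_{n-1}$.
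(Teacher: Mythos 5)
Your plan is correct and is essentially the paper's argument: the paper dismisses this lemma as an ``elementary calculation'', and that calculation is precisely your canonical-presentation sublemma (the point $[z^k_j;\bx]$ has canonical presentation $[z^m_{j+h};\,\text{ordered subtuple of coordinates in }(0,1)]$, where $m$ counts coordinates in $(0,1)$ and $h$ counts coordinates equal to $1$, so equality of points of equal coordinate length is equivalent to equality of these data), after which all three cases for $y$ follow at once. One small slip: since $d^\varepsilon_i(z^k_j)=z^{k-1}_{j+\varepsilon}$, in the case $y=0$ you have $d^0_1(z^n_0)=z^{n-1}_0$, not $z^{n-1}_1$ (the latter arises in the case $y=1$); the swap is harmless because your sublemma treats either base cube the same way.
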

\begin{proof}
	Elementary calculation.
\end{proof}

\begin{proof}[Proof of \ref{p:ZPresEq}]
	To prove (a) it is enough to consider the case where $\beta$ and $\beta'$ are as in (\ref{e:EqZ}).
	If $\supp(\beta_i)\cap \supp(\beta_{i+1})=\emptyset$,
	then for every $t\in[0,1]$ at least one of numbers $\beta_i(t)$, $\beta_{i+1}(t)$ is equal to either $0$ or $1$.
	If $\beta_i(t)=\varepsilon\in\{0,1\}$, then
\begin{multline*}
	\nu(\beta)(t)
	=
	[z^n_0;\beta_1(t),\dotsc,\beta_{i-1}(t),\beta_i(t),\beta_{i+1}(t),\dotsc,\beta_n(t))]\\
	=
	[z^{n-1}_{\varepsilon}; (\beta_1(t),\dotsc,\beta_{i-1}(t),\beta_{i+1}(t),\dotsc,\beta_n(t))]\\
	=
	[z^n_0; (\beta_1(t),\dotsc, \beta_{i-1}(t),\beta_{i+1}(t),\beta_{i}(t),\dotsc,\beta_n(t))]
	=\nu(\beta')(t),
\end{multline*}
since $d^\varepsilon_i(z^n_0)=z^{n-1}_\varepsilon=d^\varepsilon_{i+1}(z^n_0)$.

Point (b) follows immediately from the definition.

To prove (c) assume that $\alpha=\nu(\beta)=\nu(\beta')\in \vP(\tilde{Z}_n)_\bO^\bI$.
By (a) and (b) we can assume that both $\beta$ and $\beta'$ are sorted.
We will prove that $\beta=\beta'$ by induction with respect to $n$.
For $t\in [0,1]$ let 
\[
	\alpha(t)=[z^{k(t)}_{l(t)}; \gamma_{1}(t),\dotsc,\gamma_{k(t)}(t)]
\]
be the canonical presentation.
$\gamma_1$ is a function with domain $\bigcup_{i} \supp(\beta_i)=\bigcup_{i} \supp(\beta'_i)$;
for $t\in\dom(\gamma_1)$ we have
\[
	\gamma_1(t)
	= \beta_{\min\{i\st 0<\beta_i(t)<1\}}(t)
	= \beta'_{\min\{i\st 0<\beta'_i(t)<1\}}(t),
\]
and $\alpha(t)$ is a vertex  for $t\not\in \dom(\gamma_1)$.

Note that for any pair of numbers $0\leq s_0<s_1\leq 1$ the following conditions are equivalent:
\begin{enumerate}
\item
	$\gamma_1$ is defined and increasing on $(s_0,s_1)$,
	$\lim_{t\to s_0^+} \gamma_1(t)=0$ and $\lim_{t\to s_1^-}\gamma_1(t)=1$.
\item
	There exists $k\in\{1,\dots,n\}$ such that $\supp(\beta_k)=(s_0,s_1)$ and $\supp(\beta_k)\cap\supp(\beta_l)=\emptyset$ for every $l<k$.
\item
	There exists $k'\in\{1,\dots,n\}$ such that $\supp(\beta'_{k'})=(s_0,s_1)$ and $\supp(\beta'_{k'})\cap\supp(\beta'_{l'})=\emptyset$ for every $l'<k'$.
\end{enumerate}
Obviously such $k$ and $k'$ are unique for a given $(s_0,s_1)$.
Intervals $(s_0,s_1)$ that satisfy (1)--(3) will be called distinguished intervals.
There exists at least one distinguished interval: $\supp(\beta_1)$ is an example.

Let $(s_0,s_1)$ be a distinguished interval for which $k>1$.
Then $\supp(\beta_{k-1})<\supp(\beta_k)$ (since $\beta$ is sorted and (2) is assumed).
Also, $\supp(\beta_{k-2})$ is disjoint with $\supp(\beta_k)$(by (2))
and $\supp(\beta_{k-2})<\supp(\beta_k)$:
in the opposite case we have $\supp(\beta_{k-1})<\supp(\beta_k)<\supp(\beta_{k-2})$,
which implies that $\beta$ is not sorted.
By repeating this argument we show that $\supp(\beta_1)<\supp(\beta_k)$.
As a consequence, the left-most distinguished interval is $\supp(\beta_1)$.
The same argument applies for $\beta'$, so that $\supp(\beta_1')=\supp(\beta_1)$.
As a consequence,
	\[
		\beta_1(t)=\beta'_1(t)=
		\begin{cases}
			0 & \text{for $t<\supp(\beta_1)=\supp(\beta'_1)$,}\\
			\gamma_1(t) & \text{for $t\in \supp(\beta_1)$,}\\
			1 & \text{for $t>\supp(\beta_1)$.}
		\end{cases}
	\]
	Since $[z^n_0;\beta]=\nu(\beta)=\nu(\beta')=[z^n_0;\beta']$, from Lemma \ref{l:DimRed} we obtain that 
	\[
		[z^{n-1}_0;\beta_2,\dots,\beta_n]=[z^{n-1}_0;\beta'_2,\dots,\beta'_n].
	\]
The inductive hypothesis implies that $(\beta_2,\dotsc,\beta_n)=(\beta'_2,\dotsc,\beta'_n)$, and then, $\beta=\beta'$.
\end{proof}

Let $\sim$ be the equivalence relation on $Y^A\times \vP(\vI^n)_\bO^\bI$  spanned by 
\begin{multline}\label{e:PresSwap}
	((\emptyset\;\parallel\; a_1<\dotsm<a_i<a_{i+1}<\dotsc <a_n \;\parallel\;\emptyset), (\beta_1,\dotsc,\beta_i,\beta_{i+1},\dotsc,\beta_n))\\
	\sim
	((\emptyset\;\parallel\; a_1<\dotsm<a_{i+1}<a_{i}<\dotsc <a_n \;\parallel\;\emptyset), (\beta_1,\dotsc,\beta_{i+1},\beta_{i},\dotsc,\beta_n))
\end{multline}
for all $i$, $a_i$ and $\beta_i$ such that the supports of $\alpha_i$ and $\alpha_{i+1}$ are disjoint.

\begin{prp}\label{p:YPresEq}
	Let $c,c'\in Y^A[n]$, $\beta,\beta'\in \vP(\vI^n)_\bO^\bI$.
	Assume that $(c;\beta)\sim(c';\beta')$. Then
	\begin{enumerate}[\normalfont (a)]
	\item
		If $(c;\beta)\sim(c';\beta')$, then $\mu(c,\beta)=\mu(c',\beta')$.
	\item
		If $(c;\beta)\sim(c';\beta')$, then $\beta\sim \beta'$.
	\item
		If $\beta\sim\beta'$, then there exists $c''\in Y^A[n]$ such that $(c,\beta)\sim (c'',\beta')$.
	\end{enumerate}
\end{prp}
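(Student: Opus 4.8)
The plan is to follow the pattern of Proposition \ref{p:ZPresEq}, using that the generating relation (\ref{e:PresSwap}) on $Y^A[n]\times\vP(\vI^n)_\bO^\bI$ lies over the generating relation (\ref{e:EqZ}) on $\vP(\vI^n)_\bO^\bI$ via the forgetful map $(c,\beta)\mapsto\beta$, while additionally transposing two adjacent elements of the total order carried by the $n$--cube. All three assertions concern equivalence relations spanned by single moves, so throughout it suffices to treat one instance of (\ref{e:PresSwap}). Part (b) is then immediate: applying $(c,\beta)\mapsto\beta$ to an instance of (\ref{e:PresSwap}) produces exactly an instance of (\ref{e:EqZ}), so $(c;\beta)\sim(c';\beta')$ forces $\beta\sim\beta'$.

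For (a) I would fix a single move: $c=(\emptyset\parallel a_1<\dotsm<a_i<a_{i+1}<\dotsm<a_n\parallel\emptyset)$, $c'=(\emptyset\parallel a_1<\dotsm<a_{i+1}<a_i<\dotsm<a_n\parallel\emptyset)$, $\beta'$ equal to $\beta$ with coordinates $i$ and $i+1$ swapped, and $\supp(\beta_i)\cap\supp(\beta_{i+1})=\emptyset$, and then check $[c;\beta(t)]=[c';\beta'(t)]$ pointwise in $t\in[0,1]$. By disjointness of supports, at each $t$ one of $\beta_i(t),\beta_{i+1}(t)$ equals some $\varepsilon\in\{0,1\}$. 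If $\beta_i(t)=\varepsilon$, the defining relation of the geometric realization gives $[c;\beta(t)]=[d^\varepsilon_i(c);\gamma(t)]$, where $\gamma(t)$ is $\beta(t)$ with its $i$-th coordinate deleted, and likewise $[c';\beta'(t)]=[d^\varepsilon_{i+1}(c');\gamma'(t)]$ with the $(i+1)$-st coordinate of $\beta'(t)$ deleted. By (\ref{e:dYAO})--(\ref{e:dYAI}) one has $d^\varepsilon_i(c)=d^\varepsilon_{i+1}(c')$ (both delete $a_i$ from the order and place it in the $\varepsilon$--block), while $\gamma(t)=\gamma'(t)=(\beta_1(t),\dotsc,\beta_{i-1}(t),\beta_{i+1}(t),\dotsc,\beta_n(t))$; the subcase $\beta_{i+1}(t)=\varepsilon$ is symmetric (now one deletes coordinate $i+1$ on the $c$--side and coordinate $i$ on the $c'$--side, and removes $a_{i+1}$ from the orders). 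Hence $\mu$ agrees on the two sides of every generating move, so it is constant on each $\sim$--class.

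For (c), given $\beta\sim\beta'$, I would choose a chain $\beta=\beta^{(0)},\beta^{(1)},\dotsc,\beta^{(m)}=\beta'$ in which each step is a single swap of adjacent coordinates with disjoint supports, and lift it recursively starting from $(c^{(0)},\beta^{(0)})=(c,\beta)$: if the step $\beta^{(j)}\to\beta^{(j+1)}$ transposes coordinates $l,l+1$, let $c^{(j+1)}$ be $c^{(j)}$ with its $l$-th and $(l+1)$-st order elements transposed. Since $c^{(j)}\in Y^A[n]$ and $|A|=n$ force $(c^{(j)})^{-1}(*)=A$, i.e. $c^{(j)}$ has empty $0$-- and $1$--blocks, the pair $(c^{(j)},\beta^{(j)})$ has the form appearing in (\ref{e:PresSwap}), so $(c^{(j)},\beta^{(j)})\sim(c^{(j+1)},\beta^{(j+1)})$. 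Then $c''=c^{(m)}$ satisfies $(c,\beta)\sim(c'',\beta')$.

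I do not expect a genuine obstacle here: the only non-formal content is the pointwise computation in (a), and the sole risk is an index-bookkeeping slip arising from the fact that the deleted coordinate is $i$ on the $c$--side but $i+1$ on the $c'$--side (and conversely in the symmetric subcase). It is worth noting that in (c) one cannot expect $c''=c'$ in general, only existence of some compatible $c''$, which is precisely what is used later.
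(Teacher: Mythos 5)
Your proposal is correct and takes essentially the same route as the paper, whose proof merely states that (a) is analogous to the proof of Proposition~\ref{p:ZPresEq}.(a) and that (b) and (c) follow immediately from the definitions. Your pointwise verification of $[c;\beta(t)]=[c';\beta'(t)]$ via $d^\varepsilon_i(c)=d^\varepsilon_{i+1}(c')$, and the step-by-step lifting of a chain of swaps in (c), are precisely the details the paper leaves implicit, with the index bookkeeping handled correctly.
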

\begin{proof}
	Proof of (a) is similar to Proof of \ref{p:ZPresEq}.(a). Points (b) and (c) follow immediately from the definitions.
\end{proof}

\begin{prp}\label{p:PiInj}
	Let $\alpha,\alpha'\in \vP(Y^A)_\bO^\bI$. If $\pi(\alpha)=\pi(\alpha')$, then there exists $\sigma\in\Sigma_A$ such that  $\alpha=\alpha'\sigma$.
\end{prp}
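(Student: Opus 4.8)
The plan is to reduce the statement to the three ``presentation'' results already in hand plus a single combinatorial observation. The diagram (\ref{e:CovDiag}) does the heavy lifting: it lets us pass from the d-paths $\alpha,\alpha'$ on $Y^A$ to presentations $(c,\beta),(c',\beta')\in Y^A[n]\times\vP(\vI^n)_\bO^\bI$, read off the equality of their images under $\nu$, and transport the resulting identification back upstairs.

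First I would invoke Proposition \ref{p:YPres} to write $\alpha=\mu(c,\beta)=[c;\beta]$ and $\alpha'=\mu(c',\beta')=[c';\beta']$ with $c,c'\in Y^A[n]$, $\beta,\beta'\in\vP(\vI^n)_\bO^\bI$. Commutativity of (\ref{e:CovDiag}) gives $\pi(\alpha)=\nu(\varrho(c,\beta))=\nu(\beta)$ and $\pi(\alpha')=\nu(\beta')$, so the hypothesis $\pi(\alpha)=\pi(\alpha')$ says exactly $\nu(\beta)=\nu(\beta')$. By Proposition \ref{p:ZPresEq}(c) this forces $\beta\sim\beta'$. Proposition \ref{p:YPresEq}(c) then yields a cube $c''\in Y^A[n]$ with $(c,\beta)\sim(c'',\beta')$, and Proposition \ref{p:YPresEq}(a) turns this into $\alpha=\mu(c,\beta)=\mu(c'',\beta')=[c'';\beta']$. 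Thus $\alpha$ and $\alpha'$ are presented by one and the same d-path $\beta'$ of the directed $n$-cube, carried by the $n$-cubes $c''$ and $c'$ of $Y^A$.

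It remains to produce $\sigma\in\Sigma_A$ with $\alpha'\sigma=\alpha$. Because $|A|=n$, every $n$-cube of $Y^A$ is of the form $(\const_*,{<})$ for a total strict order ${<}$ on $A$; write $c''=(\const_*,{<''})$, $c'=(\const_*,{<'})$, and let $\sigma\in\Sigma_A$ be the unique order isomorphism $(A,{<''})\to(A,{<'})$. Unwinding the right action $(c,{<})\sigma=(c\circ\sigma,{<}\sigma)$ gives $c'\sigma=c''$. Since the $\Sigma_A$-action on $|Y^A|$ is the geometric realization of the $\square$-map $(\cdot)\sigma$, which sends a presentation $[d;\bx]$ to $[d\sigma;\bx]$, we get, for every $t$,
\[
	(\alpha'\sigma)(t)=\alpha'(t)\sigma=[c';\beta'(t)]\sigma=[c'\sigma;\beta'(t)]=[c'';\beta'(t)]=\alpha(t),
\]
so $\alpha=\alpha'\sigma$, as required.

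The argument has essentially no obstacle of its own, since the substance sits in Propositions \ref{p:YPres}, \ref{p:ZPresEq} and \ref{p:YPresEq}. The one point demanding care is the final step: one must verify that the $\Sigma_A$-action on $|Y^A|$ acts on a presentation $[d;\bx]$ purely by relabelling the carrier, $d\mapsto d\sigma$, with no permutation of the interval coordinates --- this is exactly the fact that if $c^{-1}(*)=\{a_1<\dots<a_k\}$ then $(c\circ\sigma)^{-1}(*)=\{\sigma^{-1}(a_1)<\dots<\sigma^{-1}(a_k)\}$ in the order ${<}\sigma$, so the $j$-th coordinate slot is unchanged --- and that the direction of $\sigma$ (from ${<''}$ to ${<'}$) matches the right-action convention, so that one indeed gets $c'\sigma=c''$ and not its inverse.
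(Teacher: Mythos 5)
Your proposal is correct and follows essentially the same route as the paper: present both paths via $\mu$ (Prop.~\ref{p:YPres}), use commutativity of (\ref{e:CovDiag}) and Propositions \ref{p:ZPresEq}(c), \ref{p:YPresEq}(c),(a) to get $\alpha=\mu(c'',\beta')$, then find $\sigma$ with $c'\sigma=c''$ and conclude by equivariance of $\mu$. The only cosmetic difference is that you construct $\sigma$ explicitly as the order isomorphism $(A,{<''})\to(A,{<'})$ and verify the equivariance $[c';\beta']\sigma=[c'\sigma;\beta']$ by hand, where the paper simply invokes transitivity of the $\Sigma_A$--action on $Y^A[n]$ and the already-noted $\Sigma_A$--equivariance of the diagram (\ref{e:CovDiag}).
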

\begin{proof}
	Choose $c,c'\in Y^A[n]$, $\beta,\beta'\in\vP(\vI^n)_\bO^\bI$ such that $\alpha=\mu(c,\beta)$, $\alpha'=\mu(c',\beta')$ (Prop. \ref{p:PiIsQuotient}).
	From commutativity of (\ref{e:CovDiag}), $\pi(\alpha)=\nu(\beta)=\nu(\beta')=\pi(\alpha')$;
	therefore $\beta\sim \beta'$ (Prop. \ref{p:ZPresEq}.(c)).
	There exists $c''\in Y^A[n]$ such that $(c,\beta)\sim(c'',\beta')$ (Prop. \ref{p:YPresEq}.(c))
	and then $\mu(c,\beta)=\mu(c'',\beta')$ (Prop. \ref{p:YPresEq}.(a)).
	The action of $\Sigma_A$ on $Y^A[n]$ is transitive, so there exists $\sigma\in \Sigma_A$ such that $c''= c'\sigma$.
	Eventually,
	\[
		\alpha=\mu(c,\beta)=\mu(c'',\beta')=\mu(c'\sigma,\beta')=\mu(c',\beta')\sigma=\alpha'\sigma.\qedhere
	\]
\end{proof}

\begin{proof}[Proof of \ref{p:MainCov}]
	Consider the commutative diagram
	\[
		\begin{diagram}
			\node{\vP(Y^A)_\bO^\bI}
				\arrow{s,l}{p}
				\arrow{se,t}{\pi}
		\\
			\node{\vP(Y^A)_\bO^\bI/\Sigma_A}
				\arrow{e,t}{i}
			\node{\vP(Y^A/\Sigma_A)_\bO^\bI\rlap{$\;\cong\vP(\tilde{Z}_n)_\bO^\bI$}}
		\end{diagram}
	\]
	The map $p$ is a $\Sigma_A$--covering (Proposition \ref{p:YActionFree}).
	The map $i$ is injective (Proposition \ref{p:PiInj}) and surjective (since $\pi$ is surjective, Proposition \ref{p:PiIsQuotient}).
	But $\pi$ is a quotient map, so then $i$ is a homeomorphism.
\end{proof}

We conclude this section with two applications of Proposition \ref{p:MainCov}.

\begin{prp}\label{p:ZHtpCW}
	$\vP(Y^A)_\bO^\bI$ has the $\Sigma_A$--homotopy type of a (free) $\Sigma_A$--CW-complex, and
	$\vP(\tilde{Z}_n)_\bO^\bI$ has the homotopy type of a CW-complex.
\end{prp}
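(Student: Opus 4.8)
The plan is to derive both assertions from Proposition~\ref{p:MainCov} and the known CW result for non-self-linked $\square$--sets by purely formal arguments about free actions of finite groups, avoiding the cube chain machinery altogether. I would use two facts. First, by Proposition~\ref{p:YANonSelfLinked} the $\square$--set $Y^A$ is non-self-linked, so by \cite{Raussen-Trace} the space $\vP(Y^A)_\bO^\bI$ has the homotopy type of a CW-complex. Second, by Propositions~\ref{p:YActionFree} and~\ref{p:MainCov} the finite group $\Sigma_A$ acts freely on $\vP(Y^A)_\bO^\bI$ and the quotient map $\pi\colon\vP(Y^A)_\bO^\bI\to\vP(\tilde{Z}_n)_\bO^\bI$ is a principal $\Sigma_A$--bundle, in particular a covering map. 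Since $|Y^A|$ is a finite CW-complex, $\vP(Y^A)_\bO^\bI\subseteq\map([0,1],|Y^A|)$ is metrizable, and hence so is $\vP(\tilde{Z}_n)_\bO^\bI$; in particular both are paracompact.

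For the statement about $\vP(\tilde{Z}_n)_\bO^\bI$, I would pass to the Borel construction $E=\vP(Y^A)_\bO^\bI\times_{\Sigma_A}E\Sigma_A$. Its projection to $B\Sigma_A$ is a fibration with fibre $\vP(Y^A)_\bO^\bI$, so by the classical theorem that the total space of a fibration over a CW-complex with CW-homotopy-type fibre has the homotopy type of a CW-complex, $E$ has the homotopy type of a CW-complex. Its other projection $E\to\vP(Y^A)_\bO^\bI/\Sigma_A\cong\vP(\tilde{Z}_n)_\bO^\bI$ is the bundle associated to $\pi$ with fibre $E\Sigma_A$, a fibre bundle with contractible fibre over a paracompact base, hence a homotopy equivalence. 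Therefore $\vP(\tilde{Z}_n)_\bO^\bI\simeq E$ has the homotopy type of a CW-complex.

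For the statement about $\vP(Y^A)_\bO^\bI$, choose (using the previous step) a homotopy equivalence $h\colon B\to\vP(\tilde{Z}_n)_\bO^\bI$ with $B$ a CW-complex, and regard $B$ and $\vP(\tilde{Z}_n)_\bO^\bI$ as $\Sigma_A$--spaces with the trivial action, so that $\pi$ is $\Sigma_A$--equivariant and $h$ is trivially a $\Sigma_A$--homotopy equivalence. The pullback $h^*\vP(Y^A)_\bO^\bI\to B$ is a principal $\Sigma_A$--bundle over a CW-complex, hence a Galois covering of a CW-complex with free cellular deck action, i.e.\ a free $\Sigma_A$--CW-complex. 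Moreover $\pi$, being a covering map, is a $\Sigma_A$--fibration: the unique lift of a $\Sigma_A$--invariant homotopy with $\Sigma_A$--equivariant initial condition is again $\Sigma_A$--equivariant, by uniqueness of lifts. Hence the map $h^*\vP(Y^A)_\bO^\bI\to\vP(Y^A)_\bO^\bI$ covering $h$ is the pullback of the $\Sigma_A$--fibration $\pi$ along the $\Sigma_A$--homotopy equivalence $h$, hence itself a $\Sigma_A$--homotopy equivalence. Thus $\vP(Y^A)_\bO^\bI$ has the $\Sigma_A$--homotopy type of the free $\Sigma_A$--CW-complex $h^*\vP(Y^A)_\bO^\bI$.

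I expect the only genuinely delicate point to be the use, in the second paragraph, of the theorem that the total space of a fibration over a CW base with CW-homotopy-type fibre is again of CW-homotopy type (closely related to the statement that a free action of a finite group on a metrizable space of CW-homotopy type has orbit space of CW-homotopy type); everything else is routine, since all the geometric content has already been packed into Proposition~\ref{p:MainCov} and the non-self-linkedness of $Y^A$. An alternative to the second paragraph would be to use the naturality of Theorem~\ref{thm:CubeChainModel} to obtain a $\Sigma_A$--equivariant weak equivalence $\vP(Y^A)_\bO^\bI\simeq|\Ch(Y^A)|$: here $\Ch(Y^A)$ is a poset by Proposition~\ref{p:YANonSelfLinked}, and $\Sigma_A$ acts freely on its objects since a permutation fixing a cube chain $(c_1,\dots,c_l)$ must fix $A=\bigcup_i c_i^{-1}(*)$ pointwise, so $|\Ch(Y^A)|$ is a free $\Sigma_A$--CW-complex; but promoting that weak equivalence to a genuine $\Sigma_A$--homotopy equivalence requires the same CW-recognition input, so this route does not really circumvent the difficulty.
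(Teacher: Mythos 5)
Your proposal is correct, but it takes a genuinely different route from the paper. The paper gets the equivariant statement in one step: Raussen's argument in \cite{Raussen-Trace} actually produces an equilocally convex structure on $\vP(K)_\bO^\bI$ for non-self-linked $K$, and since that construction is invariant under automorphisms of $K$, the space $\vP(Y^A)_\bO^\bI$ is $\Sigma_A$--equilocally convex in the sense of Waner; Waner's equivariant version of Milnor's theorem then gives the (free) $\Sigma_A$--CW homotopy type directly, and the statement about $\vP(\tilde{Z}_n)_\bO^\bI$ follows immediately by passing to orbit spaces via Proposition~\ref{p:MainCov}. You instead use only the weaker, non-equivariant output of \cite{Raussen-Trace} together with the principal bundle, and recover the equivariant refinement by general bundle theory: the Borel construction plus the Stasheff--Sch\"on theorem (total space of a fibration over a CW base with fibre of CW type has CW type) and Dold's theorem handle $\vP(\tilde{Z}_n)_\bO^\bI$, and then lifting the CW structure through the covering and an equivariant Dold-type pullback argument handle $\vP(Y^A)_\bO^\bI$ --- note that your logical order is the reverse of the paper's, deriving the equivariant claim from the quotient claim rather than vice versa. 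The individual steps check out: metrizability of the path spaces gives paracompactness; a nontrivial deck transformation cannot map a lifted cell to itself, so the pulled-back cover is indeed a free $\Sigma_A$--CW complex; and your verification of the equivariant homotopy lifting property via uniqueness of covering lifts is what makes the final pullback step legitimate, though that step (pullback of a $G$--fibration along a $G$--homotopy equivalence induces a $G$--homotopy equivalence of total spaces) is itself a nontrivial equivariant generalization that you assert rather than prove and should be cited or proved by equivariantizing the standard argument. What each approach buys: the paper's proof is short but leans on the specific observation that Raussen's construction is automorphism-invariant plus Waner's theorem \cite{Waner-Milnor}; yours avoids equivariant equilocal convexity entirely and would apply verbatim to any free action of a finite group on a metrizable path space of CW type with a principal-bundle quotient, at the cost of the heavier classical fibration-theoretic inputs you correctly flag as the delicate point.
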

\begin{proof}
	Raussen \cite{Raussen-Trace} shows that for a non-self-linked $\square$--set $K$, the space $\vP(K)_\bO^\bI$ is equilocally convex.
	His construction is invariant with respect to automorphisms of $K$.
	As a consequence, $\vP(Y^A)_\bO^\bI$ is $\Sigma_A$--equilocally convex \cite[Definition 4.2]{Waner-Milnor}.
	By the equivariant version of Milnor's Theorem \cite[Theorem 4.9]{Waner-Milnor},
	$\vP(Y^A)_\bO^\bI$ has the homotopy type of a CW-complex.
	Now Proposition \ref{p:MainCov} implies that $\vP(\tilde{Z}_n)_\bO^\bI$ 
	has the homotopy type of a CW-complex.
\end{proof}

\begin{prp}\label{p:ChPYEqHE}
	The spaces $\vP(Y^A)_\bO^\bI$ and $|\Ch(Y^A)|$ are $\Sigma_A$--homotopy equivalent.
\end{prp}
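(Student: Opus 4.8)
The plan is to combine Theorem~\ref{thm:CubeChainModel} with the equivariant CW-structure established in Proposition~\ref{p:ZHtpCW}. Theorem~\ref{thm:CubeChainModel} provides a natural weak homotopy equivalence $\vP(Y^A)_\bO^\bI\simeq_w|\Ch(Y^A)|$; since this equivalence is natural in $\square\Set_*^*\to\hTop$, it is automatically $\Sigma_A$--equivariant, meaning that for each $\sigma\in\Sigma_A$ the square relating the weak equivalence to the action of $\sigma$ on both sides commutes up to the naturality of the construction. The obstruction to upgrading a $\Sigma_A$--equivariant weak equivalence to a genuine $\Sigma_A$--homotopy equivalence is that this requires the spaces to have the homotopy type of $\Sigma_A$--CW-complexes (by the equivariant Whitehead theorem, a $\Sigma_A$--map which is a weak equivalence on all fixed-point subspaces $X^H$ is a $\Sigma_A$--homotopy equivalence provided both sides are $\Sigma_A$--CW). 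Here $|\Ch(Y^A)|$ is the geometric realization of the nerve of a poset on which $\Sigma_A$ acts, so it is a $\Sigma_A$--CW-complex automatically; and $\vP(Y^A)_\bO^\bI$ has the $\Sigma_A$--homotopy type of a free $\Sigma_A$--CW-complex by Proposition~\ref{p:ZHtpCW}.

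First I would check that the weak equivalence of Theorem~\ref{thm:CubeChainModel} is $\Sigma_A$--equivariant in the appropriate sense: naturality of the comparison map means that the action of $\Sigma_A$ (which is realized by bi-pointed $\square$--automorphisms of $Y^A$) is intertwined, so one gets an actual $\Sigma_A$--map $\vP(Y^A)_\bO^\bI\to|\Ch(Y^A)|$ which is a nonequivariant weak equivalence. Second, I would observe that the action of $\Sigma_A$ on $\vP(Y^A)_\bO^\bI$ is free (Proposition~\ref{p:YActionFree}) and that, by Proposition~\ref{p:YANonSelfLinked}, $\Ch(Y^A)$ is a poset, so the $\Sigma_A$--action on $|\Ch(Y^A)|$ is also free (a permutation fixing a cube chain pointwise is the identity, arguing as in Proposition~\ref{p:YActionFree} via the valuation maps, or directly from Lemma~\ref{l:ChNSL}). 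Hence all nontrivial fixed-point subspaces on both sides are empty, and the only fixed-point condition to verify for the equivariant Whitehead theorem is the underlying one, which is exactly Theorem~\ref{thm:CubeChainModel}.

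Third, I would invoke the equivariant Whitehead theorem: a $G$--map between $G$--spaces having the homotopy type of $G$--CW-complexes which induces a weak equivalence on each $G/H$--fixed-point space is a $G$--homotopy equivalence. We have verified both inputs: $|\Ch(Y^A)|$ is a $\Sigma_A$--CW-complex (realization of a simplicial set with free $\Sigma_A$--action, hence a free $\Sigma_A$--CW-complex), $\vP(Y^A)_\bO^\bI$ has the $\Sigma_A$--homotopy type of a $\Sigma_A$--CW-complex by Proposition~\ref{p:ZHtpCW}, and the comparison map is a $\Sigma_A$--weak equivalence (trivially on proper fixed-point sets since both are empty by freeness, and on the whole space by Theorem~\ref{thm:CubeChainModel}). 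This yields the asserted $\Sigma_A$--homotopy equivalence.

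I expect the main obstacle to be the careful verification that the comparison map of \cite[Theorem 7.6]{Z-Cub2} really is $\Sigma_A$--equivariant on the nose rather than merely up to homotopy --- one must trace through the construction of that natural transformation (presumably built from the cube-chain-to-path map realizing each chain $\bc:\square^{\vee\bn}\to Y^A$ as a concrete natural d-path) and confirm that post-composition with a $\square$--automorphism $\sigma$ of $Y^A$ is compatible with precomposition of the index chains. Since that comparison is asserted to be natural in $\square\Set_*^*$, this should be formal, but it is the step that deserves an explicit sentence. A secondary point is the freeness of the $\Sigma_A$--action on $|\Ch(Y^A)|$; this is short but should be stated, since it is what makes the fixed-point hypotheses of equivariant Whitehead vacuous away from the trivial subgroup.
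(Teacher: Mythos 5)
Your overall strategy---upgrade the natural weak equivalence of Theorem \ref{thm:CubeChainModel} to a $\Sigma_A$--homotopy equivalence via the equivariant Whitehead theorem, using freeness of the actions and $\Sigma_A$--CW structures---is the same as the paper's. The gap is in your first step: Theorem \ref{thm:CubeChainModel} asserts a \emph{natural weak equivalence as functors to $\hTop$}, and this is not realized by a single map $\vP(Y^A)_\bO^\bI\to\georel{\Ch(Y^A)}$ but by a zig-zag (from \cite[Theorem 7.5]{Z-Cub2})
\[
	\georel{\Ch(Y^A)}
	\longleftarrow
	\Lhocolim_{\bc\in\Ch(Y^A)} \vN(\square^{{\vee}\bn^{\bc}})_\bO^\bI
	\longrightarrow
	\vN^t_{[0,n]}(Y^A)_\bO^\bI,
\]
followed by a comparison of the space of natural tame d-paths $\vN^t_{[0,n]}(Y^A)_\bO^\bI$ with $\vP(Y^A)_\bO^\bI$ (via \cite[Corollary 6.6]{Z-Cub2}, whose homotopy inverses are functorial, hence equivariant). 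You claim naturality yields ``an actual $\Sigma_A$--map'' and treat the equivariance check as formal, but the substantive issue is different: to apply the equivariant Whitehead theorem you must do so \emph{to each leg of the zig-zag separately}, and for that you need the intermediate spaces---the homotopy colimit and the space of natural (tame) paths---to be free $\Sigma_A$--spaces having the $\Sigma_A$--homotopy type of $\Sigma_A$--CW-complexes. This is not automatic and is the bulk of the paper's proof: the $\Sigma_A$--CW type of the homotopy colimit is deduced from the contractibility of the spaces $\vN(\square^{{\vee}\bn^{\bc}})_\bO^\bI$, its freeness from the existence of an equivariant map to a free $\Sigma_A$--space, and the freeness of the action on $\georel{\Ch(Y^A)}$ from Propositions \ref{l:FreeActionDA} and \ref{p:ChRComp}.

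Your remaining points---freeness of the action on $\vP(Y^A)_\bO^\bI$ by Proposition \ref{p:YActionFree}, the $\Sigma_A$--CW homotopy type from Proposition \ref{p:ZHtpCW}, the $\Sigma_A$--CW structure and freeness of $\georel{\Ch(Y^A)}$ as the realization of the nerve of a free $\Sigma_A$--poset, and the vacuity of the fixed-point conditions for proper subgroups---are correct and agree with the paper. But as written the argument is incomplete: without identifying the zig-zag and verifying freeness and $\Sigma_A$--CW type for its middle term (and for $\vN^t_{[0,n]}(Y^A)_\bO^\bI$), the equivariant Whitehead theorem cannot be invoked, and no direct equivariant map between the two stated spaces has been produced.
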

\begin{proof}
	By \cite[Corollary 6.6]{Z-Cub2}, the maps
	\[
		\vN^t_{[0,n]}(K)_\bO^\bI \xrightarrow{\subseteq} \vP_{[0,n]}(K;n)_\bO^\bI \xrightarrow{\simeq} \vP(K;n)_\bO^\bI
	\]
	are homotopy equivalences with homotopy inverses that are functorial the respect to $K$.
	Thus, $\vN^t_{[0,n]}(Y^A)_\bO^\bI$ and $\vP(Y^A)_\bO^\bI$
	are $\Sigma_A$--homotopy equivalent
	and they have the homotopy type of a $\Sigma_A$--CW-complex (Proposition \ref{p:ZHtpCW}).
	Here $\vN^t_{[0,n]}(K)_\bO^\bI$ denotes the space of natural tame d-paths
	(see \cite[2.9]{Z-Cub2}).
	(Actually, all d-paths on $Y^A$ are tame, so that $\vN^t_{[0,n]}(K)_\bO^\bI=\vN_{[0,n]}(K)_\bO^\bI$).

	As shown in \cite[Theorem 7.5]{Z-Cub2}, both the maps
	\[
		|\Ch(Y^A)| 
		=
		|\Ch(Y^A;n)| 
		\longleftarrow
		\Lhocolim_{\bc\in\Ch(Y^A)} \vN(\square^{{\vee}\bn^{\bc}})_\bO^\bI
		\longrightarrow
		\vN^t_{[0,n]}(|Y^A|)_\bO^\bI \tag{*}
	\]
	are weak homotopy equivalences that are functorial
	and, hence, $\Sigma_A$--equivariant.
	All three spaces appearing have the $\Sigma_A$--homotopy types of $\Sigma_A$--CW-complexes:
	for the hocolim it follows from the fact 
	that the spaces $\vN(\square^{{\vee}\bn^{\bc}})_\bO^\bI$ are contractible \cite[Proposition 6.2]{Z-Cub}.
	Thus, these maps are $\Sigma_A$--equivariant homotopy equivalences.
	The action of $\Sigma_A$ on all these spaces is free:
	\begin{itemize}
	\item
		For $\vN^t_{[0,n]}(Y^A)_\bO^\bI$ it follows from Proposition \ref{p:YActionFree},
	\item
		For the hocolim, from the existence of an equivariant map into a free $\Sigma_A$--space,
	\item
		It will be shown below (Propositions \ref{l:FreeActionDA} and \ref{p:ChRComp})
	that $\Sigma_A$ acts freely on $\Ch(Y^A)$
	and, therefore, on $|\Ch(Y^A)|$.
	\end{itemize}
	As a consequence of equivariant Whitehead theorem \cite[Corollary 3.3]{May-Equivariant},
	the maps in (*) are $\Sigma_A$--homotopy equivalences.
\end{proof}

\section{Doubly ordered sets}
\label{s:DOS}

Throughout the whole section, $A$ is a fixed set having $n$ elements.

\def\trcup{\mathop{\bar\cup}}

\begin{df}\label{d:StrictOrders} {\ }
\begin{itemize}
\item
	\emph{A strict partial order} on $A$ is a transitive and irreflexive relation.
\item
	A strict partial order $<$ is \emph{total} if $a<b$ or $b<a$ for every $a\neq b\in A$.
\item
	\emph{The union} of strict partial orders ${<_1}$ and $<_2$ on $A$, denoted by ${<_1}\trcup{<_2}$, is the transitive closure of the union of relations ${<_1}$ and ${<_2}$;
this is not necessarily irreflexive.
\item
	A strict partial order ${<}$ on $A$ is \emph{semi-linear} if there exists a surjective function 
	\begin{equation}\label{e:HFun}
		h=h({<}):A\to \{1,\dotsc,l({<})\}
	\end{equation}
	that induces $<$, ie, such that $a<b$ if and only if $h(a)<h(b)$.
	Such a function is unique.		
\end{itemize}
\end{df}

\begin{lem}\label{l:UnionOfSemiLinearOrders}
	Let ${<_1}$ and ${<_2}$ be semi-linear strict partial orders on $A$.
	If ${<_1}\trcup{<_2}$ is irreflexive, then it is a semi-linear strict partial order.
\end{lem}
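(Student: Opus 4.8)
The plan is to verify directly that the relation ${<} := {<_1}\trcup{<_2}$, once we know it is irreflexive, is induced by a surjective map to a chain, which is exactly the definition of semi-linearity. Since ${<}$ is by construction a transitive relation, and we are assuming it is irreflexive, it is automatically a strict partial order; so the only real content is producing the function $h({<})$. I would build it from the two given functions $h_1 = h({<_1}) : A \to \{1,\dots,l_1\}$ and $h_2 = h({<_2}) : A\to\{1,\dots,l_2\}$. The natural candidate is the map $H = (h_1,h_2) : A \to \{1,\dots,l_1\}\times\{1,\dots,l_2\}$, whose image I will equip with the strict partial order induced by ${<}$; the point is to show this induced order on $\im(H)$ is total, so that $\im(H)$ is itself a chain and $h({<})$ is obtained by composing $H$ with the unique order isomorphism $\im(H)\xrightarrow{\sim}\{1,\dots,l({<})\}$.

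The key steps, in order: (1) Observe that $H$ induces ${<}$ in the weak sense that $a < b$ implies $H(a) \neq H(b)$ — indeed if $H(a)=H(b)$ then $a$ and $b$ are ${<_1}$-incomparable and ${<_2}$-incomparable, hence (by a short argument on the transitive closure, using that each generating relation $a<_\varepsilon b$ forces $h_\varepsilon(a)<h_\varepsilon(b)$) they are ${<}$-incomparable; and conversely $a<_1 b$ or $a<_2 b$ gives $H(a) < H(b)$ coordinatewise-or-equal with strict inequality in one coordinate, which propagates through the transitive closure. (2) Show the induced relation on $\im(H)$ is total: take $a,b$ with $H(a)\neq H(b)$; I must show $a<b$ or $b<a$. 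Here is where I use that the coordinatewise comparisons cannot "cross": if $h_1(a) < h_1(b)$ and $h_2(b) < h_2(a)$ then $a <_1 b$ and $b <_2 a$, so $a < b$ and $b < a$, contradicting irreflexivity of ${<}$. Hence the two coordinates agree in direction (with ties allowed), which gives comparability of $a,b$ under ${<}$. (3) Conclude that $\im(H)$ with the induced order is a finite total order, identify it with $\{1,\dots,l({<})\}$, and check surjectivity of the resulting $h({<})$ (immediate, since $H$ surjects onto its image) and that it induces ${<}$ (from step 1, plus totality closing the gap between "$<$ implies $\neq$" and "$<$ iff $h({<})$-less-than").

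The main obstacle I anticipate is step (1)'s converse direction and step (2): controlling the transitive closure defining ${\trcup}$. A chain $x_0 \mathrel{<_{\varepsilon_1}} x_1 \mathrel{<_{\varepsilon_2}} \cdots \mathrel{<_{\varepsilon_m}} x_m$ witnessing $x_0 < x_m$ mixes both orders, so I cannot simply say "$H$ is monotone for $<_1$"; instead I track the partial sums in $\{1,\dots,l_1\}\times\{1,\dots,l_2\}$ and use that along the chain, in each coordinate the value is non-decreasing (strictly increasing at the steps that use that order), which is what makes $H$ compatible with ${<}$. The irreflexivity hypothesis is then exactly what rules out the pathological "$h_1$ says $a$ first, $h_2$ says $b$ first" configuration in step (2); without it $\im(H)$ would only be a poset, not a chain. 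Everything else is bookkeeping with finite totally ordered sets.
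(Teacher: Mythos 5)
Your proposal is correct and follows essentially the same route as the paper: both work with the map $(h_1,h_2):A\to\{1,\dots,l_1\}\times\{1,\dots,l_2\}$, use irreflexivity of ${<_1}\trcup{<_2}$ to exclude the crossing configuration $h_1(a)<h_1(b)$, $h_2(b)<h_2(a)$, and then observe that the image is totally ordered and can be identified order-preservingly with $\{1,\dots,m\}$, giving the inducing function. The only small remark is that this no-crossing observation is already needed in your step (1) (to see that at a $<_1$-step of a chain the $h_2$-coordinate cannot decrease), not only in step (2), but this does not change the argument.
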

\begin{proof}
	Assume that ${<_1}\trcup{<_2}$ is a strict partial order.
	Denote $h_1=h({<_1})$, $h_2=h(<_2)$ and let
	\[
		B=\im(h_1 \times h_2)\subseteq \{1,\dotsc,l({<_1})\}\times \{1,\dotsc,l({<_2})\}.
	\]
	There are no elements $a,a'\in A$ such that $h_1(a)<h_1(a')$ and $h_2(a')<h_2(a)$.
	As a consequence, the relation $<_B$ on $B$:
	\[
		(b_1,b_2)<_B (b_1',b_2')\iff \text{$b_1<b_1'$ or $b_2< b_2'$}
	\]
	is a total strict order on $B$. Let $p:(B,{<}_B)\to \{1<\dotsm<m\}$ be the unique order-preserving bijection; it is easy to verify that the composition $p\circ(h_1\times h_2)$ induces ${<_1}\trcup{<_2}$.
\end{proof}

\begin{df}\label{d:DoublyOrderedSet}
	{\ }
	\begin{itemize}
	\item
		\emph{A double order} on $A$ is a pair of strict partial orders ${\dle}=(\xle,\yle)$ on $A$ such that for every $a,b\in A$ at least one of the conditions $a\xle b$, $b\xle a$, $a\yle b$, $b\yle a$ holds.
	\item
		A double order $\dle$ is \emph{regular} if $\xle$ is semi-linear and $a\xle b$ implies that neither $a\yle b$ nor $b \yle a$.	
	\item
		The union of double orders ${\dle_1}=({\xle_1},{\yle_1})$ and ${\dle_2}=({\xle_2},{\yle_2})$
		is the pair of relations ${\dle_1}\mathop{\bar\cup}{\dle_2}=({\xle_1}\mathop{\bar\cup}{\xle_2},{\yle_1}\mathop{\bar\cup}{\yle_2})$,
		which is a double order if both ${\xle_1}\mathop{\bar\cup}{\xle_2}$ and ${\yle_1}\mathop{\bar\cup}{\yle_2}$ are irreflexive.
	\item
		A double order $\dle$ is \emph{semi-regular} if it is a union of regular double orders, ie, there exist regular double orders ${\dle_1},\dotsc,{\dle_k}$ such that ${\xle}=\bar\bigcup_{i=1}^k {\xle_i}$ and ${\yle}=\bar\bigcup_{i=1}^k {\yle_i}$.
	\end{itemize}
\end{df}

Let $D(A)$ (resp. $R(A)$, $R^+(A)$) denote the set of double (resp. regular, semi-regular) orders on $A$.
The permutation group $\Sigma_A$ acts on $D(A)$ (resp. $R(A)$, $R^+(A)$) from the right by ${\dle}\sigma=({\xle}\sigma, {\yle}\sigma)$,
\[
	a \mathop{{\xle}\sigma}b \iff \sigma(a)\xle\sigma(b),\qquad
	a\mathop{{\yle}\sigma}b \iff \sigma(a)\yle\sigma(b).
\]
\begin{prp}\label{l:FreeActionDA}
	$\Sigma_A$ acts freely on $D(A)$. As a consequence, $\Sigma_A$ also acts freely on $R(A)$ and $R^+(A)$.
\end{prp}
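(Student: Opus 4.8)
The plan is to prove the first assertion---freeness on $D(A)$---directly; the statements for $R(A)$ and $R^+(A)$ are then immediate, since both are $\Sigma_A$-invariant subsets of $D(A)$ and the restriction of a free action to an invariant subset is free.

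So suppose $\sigma\in\Sigma_A$ fixes a double order ${\dle}=(\xle,\yle)$. By the definition of the action, this means precisely that $\sigma$ is an automorphism of each of the two strict partial orders: $a\xle b\iff\sigma(a)\xle\sigma(b)$ and $a\yle b\iff\sigma(a)\yle\sigma(b)$ for all $a,b\in A$. Assume toward a contradiction that $\sigma\neq\id$ and pick $a\in A$ with $\sigma(a)\neq a$. Since $\Sigma_A$ is finite, there is an integer $m\geq 2$ with $\sigma^m(a)=a$.

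The one substantive step is to invoke the defining condition of a double order (Definition \ref{d:DoublyOrderedSet}) for the pair $a,\sigma(a)$: at least one of $a\xle\sigma(a)$, $\sigma(a)\xle a$, $a\yle\sigma(a)$, $\sigma(a)\yle a$ holds. The four cases are handled identically, so suppose first that $a\xle\sigma(a)$. Applying the order-automorphism $\sigma$ repeatedly yields $\sigma^i(a)\xle\sigma^{i+1}(a)$ for every $i\geq 0$, and transitivity of $\xle$ then gives $a\xle\sigma^m(a)=a$, contradicting irreflexivity. If instead $\sigma(a)\xle a$, the same iteration gives $\sigma^{i+1}(a)\xle\sigma^{i}(a)$ for every $i\geq 0$, hence $a=\sigma^m(a)\xle a$, again a contradiction; the two $\yle$ cases are word-for-word the same with $\xle$ replaced by $\yle$. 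Hence $\sigma=\id$, so $\Sigma_A$ acts freely on $D(A)$, and therefore on $R(A)$ and $R^+(A)$ as well.

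There is no genuine obstacle here. The only point requiring a little care is that $\sigma$ has finite order, so that iterating the relation along the $\sigma$-orbit of $a$ eventually closes up at $a$ and produces the forbidden reflexive instance; the totality axiom of a double order is exactly what ensures that $a$ and $\sigma(a)$ are comparable in at least one of the two orders, which is what launches the argument (a single partial order need not be rigid).
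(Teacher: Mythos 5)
Your proof is correct, but it takes a different route from the paper. The paper argues by induction on $n=|A|$: it takes the set $M$ of $\xle$--minimal elements, notes that $\sigma(M)=M$, that no two elements of $M$ are $\xle$--comparable so the double-order axiom forces $M$ to be totally ordered by $\yle$, hence $\sigma|_M$ is an automorphism of a finite total order and therefore the identity, and then applies the inductive hypothesis to the restricted double order on $A\setminus M$. You instead argue globally along a $\sigma$--orbit: since $\sigma$ preserves both $\xle$ and $\yle$ and has finite order, $a$ and $\sigma(a)$ can never be comparable in either order (iterate and close the cycle against irreflexivity), while the double-order axiom forces them to be comparable unless $\sigma(a)=a$. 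Your argument is slightly more self-contained -- no induction, and no need to observe that the restriction of a double order to an invariant subset is again a double order -- and it isolates the exact mechanism (a finite-order automorphism of a strict partial order cannot move a point to a comparable point); it in fact proves the stronger statement that any permutation with finite orbits preserving both orders is the identity. The paper's induction, on the other hand, fits the style of the surrounding arguments (e.g.\ the analysis via $\xle$--levels in Proposition \ref{p:DOAction}) and makes visible the structural fact that the $\xle$--minimal layer is $\yle$--totally ordered. Both proofs use finiteness essentially, as they must: for infinite $A$ the statement fails (a shift of $\Z$ with its usual order and empty second order). Also note, as you implicitly did, that the totality clause of Definition \ref{d:DoublyOrderedSet} is to be read for distinct $a,b$, which is all you need since $a\neq\sigma(a)$.
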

\begin{proof}
	Let ${\dle}\in D(A)$, $\sigma\in \Sigma_A$ 
	and assume that ${\dle}\sigma={\dle}$.
	We will show that $\sigma$ is the identity inductively with respect to $n=|A|$.
	If $n=1$, this is clear.
	Let $M\subseteq A$ be the set of all $\xle$--minimal elements of $A$.
	Obviously $\sigma(M)=M$.
	Since no two elements of $M$ are comparable by $\xle$,
	then $M$ is totally ordered by ${\yle}$
	and $\sigma|_M$ is the identity.
	The restriction $\sigma|_{A\setminus M}$ is the identity by the inductive hypothesis.
\end{proof}

\begin{prp}\label{p:DOAction}
	Let ${\dle}$ be a regular double order on $A$ and let  $\sigma\in\Sigma_A$.
	Then ${\dle}\trcup{{\dle}\sigma}$ is a double order if and only if $\sigma$ is the identity.
\end{prp}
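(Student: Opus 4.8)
The plan is to reduce the claim to two independent acyclicity conditions, one for each of the two partial orders constituting a double order, and to exploit the rigid structure of regular double orders. The ``if'' direction is immediate: if $\sigma=\id$ then ${\dle}\sigma={\dle}$, so ${\dle}\trcup{{\dle}\sigma}={\dle}$ is a double order. For the converse, since $\xle\subseteq{\xle}\trcup({\xle}\sigma)$ and $\yle\subseteq{\yle}\trcup({\yle}\sigma)$ componentwise, the comparability clause of Definition \ref{d:DoublyOrderedSet} holds automatically for ${\dle}\trcup{{\dle}\sigma}$; hence the hypothesis is equivalent to requiring that both ${\xle}\trcup({\xle}\sigma)$ and ${\yle}\trcup({\yle}\sigma)$ be irreflexive. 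I would also record the structure of a regular double order $\dle$: writing $h=h({\xle})\colon A\to\{1,\dots,l\}$ for the surjection inducing the semi-linear order $\xle$ and $L_i=h^{-1}(i)$ for its level sets, regularity forces elements of distinct levels to be $\yle$-incomparable, and then the comparability clause forces $\yle$ to restrict to a linear order on each $L_i$ and to be the disjoint union of these. Note that $\xle{\sigma}$ is semi-linear with inducing surjection $g:=h\circ\sigma$ and $g^{-1}(i)=\sigma^{-1}(L_i)$, so $|g^{-1}(i)|=|L_i|$ because $\sigma$ is a bijection.

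The first main step is to deduce from irreflexivity of ${\xle}\trcup({\xle}\sigma)$ that $\sigma$ preserves each level set. If $h(a)<h(a')$ and $g(a')<g(a)$, then $a\xle a'$ and $a'\mathop{{\xle}\sigma}a$, so $a$ would be related to itself by ${\xle}\trcup({\xle}\sigma)$, contradicting irreflexivity; hence $h(a)<h(a')$ implies $g(a)\le g(a')$. Combined with $|g^{-1}(i)|=|h^{-1}(i)|$, this forces $g=h$ by induction on $l$: the elements of $h^{-1}(1)$ realize the smallest values of $g$, so $g^{-1}(\{1,\dots,m-1\})\subseteq h^{-1}(1)\subseteq g^{-1}(\{1,\dots,m\})$ with $m=\max_{a\in h^{-1}(1)}g(a)$, and comparing cardinalities (each $|h^{-1}(j)|\geq 1$) yields $m\le 2$ and $g^{-1}(1)=h^{-1}(1)$; one then restricts to $A\setminus h^{-1}(1)$, where $h$ and $g$ map onto $\{2,\dots,l\}$, and repeats. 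Thus $h\circ\sigma=h$, i.e.\ $\sigma(L_i)=L_i$ for every $i$. This cardinality argument is where care is needed: the implication ``$h(a)<h(a')\Rightarrow g(a)\le g(a')$'' alone does \emph{not} force $g=h$ (for instance, value patterns $(1,1,2)$ and $(1,2,2)$ on a three-element set satisfy it), so it is essential to use bijectivity of $\sigma$, i.e.\ that $g$ and $h$ have equipotent level sets.

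The second step uses that $\sigma$ now permutes each $L_i$ onto itself, so $\yle{\sigma}$ is again a disjoint union of linear orders on the sets $L_i$, with $({\yle}\sigma)|_{L_i}$ pulled back from $(L_i,\yle|_{L_i})$ along $\sigma|_{L_i}$. Hence ${\yle}\trcup({\yle}\sigma)$ is block-diagonal with respect to the partition $\{L_i\}$, and on each block it equals the transitive closure of a union of two linear orders on the finite set $L_i$; such a union is acyclic if and only if the two orders coincide. Therefore $\yle|_{L_i}=({\yle}\sigma)|_{L_i}$ for each $i$, which says $\sigma|_{L_i}$ is an order-automorphism of a finite linearly ordered set, hence the identity. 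Combining over all $i$ gives $\sigma=\id$. Apart from the cardinality point flagged above, the remaining ingredients — the structural description of regular double orders and the acyclicity criterion for a union of two linear orders — are routine.
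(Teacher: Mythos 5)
Your proof is correct and follows essentially the same route as the paper's: first use irreflexivity of ${\xle}\trcup({\xle}\sigma)$ together with the equipotence of the level sets of $h({\xle})$ and $h({\xle}\sigma)=h({\xle})\circ\sigma$ to force $h({\xle})\circ\sigma=h({\xle})$, then compare $\yle$ and ${\yle}\sigma$ on each level to conclude $\sigma=\id$. The only difference is that you spell out in full the cardinality induction and the ``two linear orders with acyclic union must coincide'' step, which the paper leaves terse.
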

\begin{proof}
	Assume that ${\dle}\trcup{{\dle}\sigma}$ is a double order.	
	Clearly $h({\xle}\sigma)=h({\xle})\circ \sigma$,
	which implies that for all $i\in \{1,\dotsc,l({\xle})\}$,
	the preimages $h({\xle})^{-1}(i)$ and $h({\xle}\sigma)^{-1}(i)$
	are equipotent.
	Assume that the functions $h({\xle}\sigma)$ and $h({\xle})$ are not equal.
	Then there exist $a,b\in A$
	such that $h({\xle})(a)<h({\xle})(b)$ and $h({\xle}\sigma)(b)<h({\xle}\sigma)(a)$,
	ie $a\xle b \mathrel{{\xle}\sigma} a$: a contradiction.
	As a consequence, $h({\xle})\sigma=h({\xle})$.
	
	In a similar way we show that for every $i\in \{1,\dotsc,l({\xle})\}$ and every $a,b\in h({\xle})^{-1}(i)=h({\xle}\sigma)^{-1}(i)$,
	$a\yle b$ if and only if $a \mathrel{{\yle}\sigma} b$.
	Eventually, $\sigma$ is the identity. The inverse implication is obvious.
\end{proof}

Let us introduce two (reflexive) partial orders on $D(A)$:
\begin{align}
	(\dle_1)\subseteq (\dle_2) &\iff (\xle_1)\subseteq (\xle_2) \land (\yle_1)\subseteq(\yle_2),\\
	(\dle_1)\sqsubseteq (\dle_2) &\iff (\xle_1)\subseteq (\xle_2) \land (\yle_1)\supseteq(\yle_2).
\end{align}

In the remaining part of this section we will show that the following pairs of $\Sigma_A$--spaces are $\Sigma_A$--homotopy equivalent:
\begin{itemize}
\item
	$|(R(A),{\sqsubseteq})|$ and $|(R^+(A),{\subseteq})|$ (Proposition \ref{p:RRComp}).
\item
	$|(R(A),{\sqsubseteq})|$ and $\georel{\Ch(Y^A)}$ (Proposition \ref{p:ChRComp}),
\end{itemize}
An equivalence between $|(R^+(A),{\subseteq})|$ and $\OConf(A,\R^2)$ will be proven in the next section  (Proposition \ref{p:RConfEq}).


For strict partial orders ${<}_1$ and ${<}_2$ on $A$ define a relation
\begin{equation}
	a({<}_1{\setminus}{<}_2)b 
	\iff
	\text{$a<_1 b$ and neither $a<_2 b$ nor $b<_2 a$.}
\end{equation}

\begin{lem}
	If ${\dle}=({\xle},{\yle})\in R^+(A)$, then $({\xle},{\yle}{\setminus}{\xle})\in R(A)$.
\end{lem}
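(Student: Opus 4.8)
The plan is to verify, for the pair $({\xle},{\yle}{\setminus}{\xle})$, the three ingredients in the definition of a regular double order: that it is a double order, that $\xle$ is semi-linear, and that $a\xle b$ precludes both $a({\yle}{\setminus}{\xle})b$ and $b({\yle}{\setminus}{\xle})a$.

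First I would establish that $\xle$ is semi-linear. Since $\dle$ is semi-regular, we may write ${\xle}=\bar\bigcup_{i=1}^{k}{\xle_i}$ for regular double orders ${\dle_i}$, each ${\xle_i}$ being semi-linear by regularity. Viewing this $k$-fold union as an iterated binary union, every partial union is a sub-relation of $\xle$, hence irreflexive (as $\xle$ is a strict partial order), so a straightforward induction based on Lemma \ref{l:UnionOfSemiLinearOrders} yields that $\xle$ is semi-linear. Let $h=h({\xle})$ be the associated function; then $a\xle b\iff h(a)<h(b)$, and for $a\neq b$ the elements $a,b$ are $\xle$-incomparable precisely when $h(a)=h(b)$.

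Next I would check that ${\yle}{\setminus}{\xle}$ is a strict partial order. Irreflexivity is immediate, since $a({\yle}{\setminus}{\xle})a$ would entail $a\yle a$. For transitivity, suppose $a({\yle}{\setminus}{\xle})b$ and $b({\yle}{\setminus}{\xle})c$: then $a\yle c$ by transitivity of $\yle$, while the $\xle$-incomparability of the pairs $a,b$ and $b,c$ forces $h(a)=h(b)=h(c)$, so $a,c$ are likewise $\xle$-incomparable, whence $a({\yle}{\setminus}{\xle})c$. This is the one place where semi-linearity is genuinely used, and I expect it to be the only nontrivial point of the argument.

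It then remains to dispatch two routine points. For the double-order condition, take distinct $a,b\in A$: since $\dle$ is a double order, one of $a\xle b$, $b\xle a$, $a\yle b$, $b\yle a$ holds; the first two cases are done, and in the others $a,b$ are $\xle$-incomparable, so $a\yle b$ (resp.\ $b\yle a$) upgrades to $a({\yle}{\setminus}{\xle})b$ (resp.\ $b({\yle}{\setminus}{\xle})a$). For the last condition, both $a({\yle}{\setminus}{\xle})b$ and $b({\yle}{\setminus}{\xle})a$ require $a,b$ to be $\xle$-incomparable, so neither can hold when $a\xle b$. Combining these, $({\xle},{\yle}{\setminus}{\xle})\in R(A)$.
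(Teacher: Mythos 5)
Your proof is correct and takes essentially the same route as the paper: semi-linearity of $\xle$ via Lemma \ref{l:UnionOfSemiLinearOrders}, transitivity of ${\yle}{\setminus}{\xle}$ through the level function $h({\xle})$, and the observation that ${\yle}{\setminus}{\xle}$-related elements are $\xle$-incomparable. You merely make explicit two points the paper leaves implicit, namely the induction needed to apply the binary-union lemma to the $k$-fold union in the definition of semi-regularity, and the verification that $({\xle},{\yle}{\setminus}{\xle})$ still satisfies the totality condition for a double order.
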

\begin{proof}
		By Lemma \ref{l:UnionOfSemiLinearOrders}, $\xle$ is semi-linear. 
		For $a,b,c\in A$ we have
		\[\
			a \mathrel{{\yle}{\setminus}{\xle}} b \mathrel{{\yle}{\setminus}{\xle}}c 
			\implies
			(\text{$h({\xle})(a)=h({\xle})(b)=h({\xle})(c)$ and $a\yle c$})
			\implies 
			a \mathrel{{\yle}{\setminus}{\xle}}c,
		\]
		so ${\yle}{\setminus}{\xle}$ is transitive.
		Since $\yle$ is irreflexive, then so is ${{\yle}{\setminus}{\xle}}$.
		Hence, ${\yle}{\setminus}{\xle}$ is a strict partial order.
		If $a\mathrel{{\yle}{\setminus}{\xle}}b$, then obviously neither $a\xle b$ nor $b\xle a$;
		as a consequence, $({\xle},{\yle}{\setminus}{\xle})\in R(A)$.
\end{proof}

\begin{lem}\label{l:DLECompare}
	Assume that ${\dle}_1 \subseteq {\dle_2}\in D(A)$. Then 
	\begin{enumerate}[\normalfont(a)]
	\item
		${\yle_2}{\setminus}{\xle_2}={\yle_1}{\setminus}{\xle_2}\subseteq {\yle_1}\setminus{\xle_1}$,
	\item
		$({\xle_1},{\yle_1}{\setminus}{\xle_1})\sqsubseteq ({\xle_2},{\yle_2}{\setminus}{\xle_2})$.
	\end{enumerate}
\end{lem}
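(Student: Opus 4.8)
The plan is to prove (b) as a formal consequence of (a): by the definition of ${\sqsubseteq}$ one has $({\xle_1},{\yle_1}{\setminus}{\xle_1})\sqsubseteq({\xle_2},{\yle_2}{\setminus}{\xle_2})$ precisely when ${\xle_1}\subseteq{\xle_2}$ (which is part of the hypothesis ${\dle_1}\subseteq{\dle_2}$) and ${\yle_1}{\setminus}{\xle_1}\supseteq{\yle_2}{\setminus}{\xle_2}$, and the latter is exactly the last inclusion asserted in (a). So all the work sits in (a), and within (a) the real point is the equality ${\yle_2}{\setminus}{\xle_2}={\yle_1}{\setminus}{\xle_2}$; the trailing inclusion ${\yle_1}{\setminus}{\xle_2}\subseteq{\yle_1}{\setminus}{\xle_1}$ is then immediate, since ${\xle_1}\subseteq{\xle_2}$ forces $\xle_2$-incomparability to imply $\xle_1$-incomparability, so enlarging the subtracted order can only shrink the difference.

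For the equality I would argue by double inclusion, just unwinding the definition of ${<_1}{\setminus}{<_2}$. The inclusion $\supseteq$ is the easy one: if $a\,({\yle_1}{\setminus}{\xle_2})\,b$ then $a\yle_1 b$, hence $a\yle_2 b$ because ${\yle_1}\subseteq{\yle_2}$, while the side condition that $a,b$ be $\xle_2$-incomparable is literally the same on both sides. For $\subseteq$, suppose $a\yle_2 b$ with $a,b$ incomparable under $\xle_2$; since ${\xle_1}\subseteq{\xle_2}$ they are also incomparable under $\xle_1$, so the double-order axiom applied to ${\dle_1}$ forces $a\yle_1 b$ or $b\yle_1 a$. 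The second alternative is impossible: it would give $b\yle_2 a$ by ${\yle_1}\subseteq{\yle_2}$, and together with $a\yle_2 b$ this contradicts transitivity and irreflexivity of $\yle_2$. Hence $a\yle_1 b$, so $a\,({\yle_1}{\setminus}{\xle_2})\,b$, and the equality follows.

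The only step that is not pure bookkeeping with the definitions of $\subseteq$, $\sqsubseteq$ and ${\setminus}$, and hence the main obstacle, is this last argument excluding $b\yle_1 a$: it is the single place where one genuinely uses that ${\dle_1}$ is a double order together with the fact that $\yle_2$ is a strict partial order. Once the equality is in place, part (a) is complete, and part (b) drops out exactly as described above, so no further computation is needed.
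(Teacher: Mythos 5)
Your proof is correct and follows essentially the same route as the paper: the paper likewise reduces everything to showing ${\yle_2}{\setminus}{\xle_2}\subseteq{\yle_1}{\setminus}{\xle_2}$ by noting that $\xle_2$-incomparability forces $\xle_1$-incomparability, invoking the double-order axiom for ${\dle_1}$ to get $\yle_1$-comparability, and ruling out $b\yle_1 a$ via ${\yle_1}\subseteq{\yle_2}$ and strictness of $\yle_2$, with the remaining inclusions and part (b) treated as immediate. No gaps; your write-up just spells out slightly more explicitly the contradiction step that the paper leaves terse.
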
	
\begin{proof}
	The inclusions ${\yle_1}{\setminus}{\xle_2}\subseteq {\yle_2}{\setminus}{\xle_2}$ and 
	${\yle_1}\setminus{\xle_2}\subseteq {\yle_1}\setminus{\xle_1}$ are obvious.
	If $a \mathrel{{\yle_2}{\setminus}{\xle_2}} b$, then neither $a\xle_2 b$ nor $b\xle_2 a$, which implies that neither $a\xle_1 b$ nor $b\xle_1 a$.
	As a consequence, $a$ and $b$ must be comparable by $\yle_1$. 
	Since $a\yle_2 b$, it follows that $a\yle_1 b$ and then $a \mathrel{{\yle_1}{\setminus}{\xle_2}} b$.
	This proves (a), from which (b) follows immediately.
\end{proof}

Lemma \ref{l:DLECompare}.(b) implies that the map
\begin{equation}
	F:(R^+(A),{\subseteq})\ni ({\xle},{\yle})\mapsto ({\xle},{\yle}{\setminus}{\xle}) \in (R(A),{\sqsubseteq})
\end{equation}
preserves order (ie, it is a functor).
In order to define an ``inverse" functor we need to pass to barycentric subdivisions.
\emph{The barycentric subdivision} of a poset $(P,{\leq})$ is the poset $(\sd(P),{\subseteq})$ whose elements are chains in $P$.
Note that the functor 
	\begin{equation}\label{e:MaxEq}
		\max: \sd(P)\ni (p_0\lneq\dotsm\lneq p_n) \mapsto p_n \in (P,{\leq})
	\end{equation}
induces a homotopy equivalence $\georel{\max}: \georel{\sd(P)}\to \georel{P}$.

The formula
\begin{equation}
	G((\dle_0)\sqsubsetneq (\dle_1)\sqsubsetneq \dotsm\sqsubsetneq (\dle_r))
	=
	\bigcup_{i=0}^r {\dle_i}
	=
	\left(\xle_r, \yle_0\right).
\end{equation}
defines a functor $G:\sd(R(A),\sqsubseteq)\to (R^+(A),\subseteq)$.
Indeed, if
\[
	\big((\dle_0)\sqsubsetneq (\dle_1)\sqsubsetneq \dotsm\sqsubsetneq (\dle_r)\big)
	\subseteq
	\big((\dle'_0)\sqsubsetneq (\dle'_1)\sqsubsetneq \dotsm\sqsubsetneq (\dle'_s)\big),
\] 
then
\[
	G\big((\dle_0)\sqsubsetneq (\dle_1)\sqsubsetneq \dotsm\sqsubsetneq (\dle_r)\big)
	=
	\bigcup_{i=0}^r {\dle_i}
	\subseteq
	\bigcup_{i=0}^s {\dle'_i}
	=
	G\big((\dle'_0)\sqsubsetneq (\dle'_1)\sqsubsetneq \dotsm\sqsubsetneq (\dle'_s)\big).
\]
Let us consider the diagram of posets:
\begin{equation}
	\begin{diagram}
		\node{(R(A),{\sqsubseteq})}
		\node{(R^+(A),{\subseteq})}
			\arrow{w,t}{F}
	\\
		\node{\sd(R(A),{\sqsubseteq})}
			\arrow{ne,t}{G}
			\arrow{n,l}{\max}
		\node{\sd(R^+(A),{\subseteq})}
			\arrow{w,t}{\sd(F)}
			\arrow{n,r}{\max}
	\end{diagram}
\end{equation}

Using Lemma \ref{l:DLECompare}.(a) for $({\xle_r},{\yle_r})\subseteq ({\xle_r},{\yle_0})$, we obtain
\begin{multline*}
	FG((\dle_0)\sqsubsetneq (\dle_1)\sqsubsetneq \dotsm\sqsubsetneq (\dle_r))
	= 
	F\left({\xle_r}, {\yle_0}\right)
	=
	\left({\xle_r}, {\yle_0\setminus\xle_r}\right)
	=
	\left({\xle_r}, {\yle_r\setminus\xle_r}\right)	
	\\
	=
	\left({\xle_r},{\yle_r}\right)
	=
	\max((\dle_0)\sqsubsetneq (\dle_1)\sqsubsetneq \dotsm\sqsubsetneq (\dle_r)).
\end{multline*}
Thus, the left upper triangle of the diagram commutes.
Furthermore, 
\begin{multline*}
	G\circ \sd(F) ((\dle_0)\subsetneq (\dle_1)\subsetneq \dotsm\subsetneq (\dle_r))
	=
	G\left(\left(\xle_0, (\yle_0{\setminus}\xle_0)\right) \sqsubsetneq\dots,\sqsubsetneq \left(\xle_r, (\yle_r{\setminus} \xle_r)\right) \right)\\
	=
	\left(\xle_r, (\yle_0{\setminus} \xle_0)\right)
	\subseteq 
	\left(\xle_r, \yle_0\right)
	\subseteq 
	\left(\xle_r, \yle_r\right)
	=
	\max((\dle_0)\subsetneq (\dle_1)\subsetneq \dotsm\subsetneq (\dle_r)).	
\end{multline*}
Hence, the right bottom triangle does not necessarily commute. Nevertheless, there is a natural transformation of functors $G\circ \sd(F)\Rightarrow \max:\sd(R^+(A),{\subseteq})\to (R^+(A),{\subseteq})$, so the geometric realizations of functors $G\circ\sd(F)$ and $\max$ are homotopic.

All the maps above are $\Sigma_A$--equivariant, so we have proven the following.

\begin{prp}\label{p:RRComp}
	The maps $\georel{F}: \georel{(R^+(A),{\subseteq})} \to \georel{(R(A),{\sqsubseteq})}$ and 
	\[
		\georel{(R(A),{\sqsubseteq})}
		\cong
		\georel{\sd(R(A),{\sqsubseteq})}
		\xrightarrow{\georel{G}}
		\georel{(R^+(A),{\subseteq})}
	\]
	are mutual $\Sigma_A$--equivariant homotopy inverses.\qed
\end{prp}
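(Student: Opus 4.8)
The plan is to feed the three structural facts about $F$, $G$, $\sd(F)$ and $\max$ recorded just above --- the equality $FG=\max$ of functors $\sd(R(A),{\sqsubseteq})\to(R(A),{\sqsubseteq})$, the naturality identity $\max_{R(A)}\circ\sd(F)=F\circ\max_{R^+(A)}$, and the natural transformation $G\circ\sd(F)\Rightarrow\max_{R^+(A)}$ --- into the standard argument showing that a pair of maps through a barycentric subdivision are mutually inverse, and then to check that every step survives passage to $\Sigma_A$--equivariant homotopy. Throughout, write $\Phi\colon\georel{(R(A),{\sqsubseteq})}\to\georel{(R^+(A),{\subseteq})}$ for the second map in the statement.

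The first thing to settle is that $\georel{\max}$ is a $\Sigma_A$--equivariant, and not merely an ordinary, homotopy equivalence. By Proposition~\ref{l:FreeActionDA} the action of $\Sigma_A$ on $R(A)$ and on $R^+(A)$ is free; hence so is the action on their barycentric subdivisions, since an order-preserving permutation fixing a chain must fix each element of that chain. Consequently all four geometric realizations occurring here are free $\Sigma_A$--CW-complexes, and the equivariant map $\georel{\max}$, being a non-equivariant homotopy equivalence, is a $\Sigma_A$--homotopy equivalence by the equivariant Whitehead theorem \cite[Corollary 3.3]{May-Equivariant}. Fix $\Sigma_A$--equivariant homotopy inverses $\psi$ of $\georel{\max_{R(A)}}$ and $\psi'$ of $\georel{\max_{R^+(A)}}$. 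Since the barycentric homeomorphism $\georel{(R(A),{\sqsubseteq})}\cong\georel{\sd(R(A),{\sqsubseteq})}$ used to form $\Phi$ is $\Sigma_A$--homotopic to $\psi$ (the affine homotopy between it and the last-vertex map $\georel{\max_{R(A)}}$ is equivariant), we have $\Phi\simeq_{\Sigma_A}\georel{G}\circ\psi$.

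Now the two composites. For $\georel{F}\circ\Phi$, the equality $FG=\max_{R(A)}$ gives $\georel{F}\circ\Phi\simeq_{\Sigma_A}\georel{F}\circ\georel{G}\circ\psi=\georel{FG}\circ\psi=\georel{\max_{R(A)}}\circ\psi\simeq_{\Sigma_A}\id$. For $\Phi\circ\georel{F}$, realizing the naturality identity for $\max$ and conjugating by $\psi$ and $\psi'$ yields $\psi\circ\georel{F}\simeq_{\Sigma_A}\georel{\sd(F)}\circ\psi'$, whence $\Phi\circ\georel{F}\simeq_{\Sigma_A}\georel{G}\circ\georel{\sd(F)}\circ\psi'=\georel{G\circ\sd(F)}\circ\psi'$; and the natural transformation $G\circ\sd(F)\Rightarrow\max_{R^+(A)}$, being a transformation of $\Sigma_A$--equivariant functors, realizes to a $\Sigma_A$--homotopy $\georel{G\circ\sd(F)}\simeq_{\Sigma_A}\georel{\max_{R^+(A)}}$, so $\Phi\circ\georel{F}\simeq_{\Sigma_A}\georel{\max_{R^+(A)}}\circ\psi'\simeq_{\Sigma_A}\id$. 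This is the assertion.

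Since the algebraic input --- the two commuting triangles and the interpolating natural transformation --- is already in hand, I do not expect a genuine obstacle; the only point that deserves real care is the equivariant upgrade of $\georel{\max}$, i.e. checking that the posets in question and their subdivisions carry free $\Sigma_A$--actions (so equivariant Whitehead applies and so the subdivision homeomorphism may be replaced, up to $\Sigma_A$--homotopy, by an equivariant homotopy inverse of $\georel{\max}$). Everything else is bookkeeping with the diagram displayed above.
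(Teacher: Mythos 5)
Your argument is correct and follows essentially the same route as the paper: it rests on exactly the same three facts ($FG=\max$, the strict naturality square for $\max$, and the natural transformation $G\circ\sd(F)\Rightarrow\max$), and simply makes explicit the equivariance bookkeeping that the paper leaves implicit. The only remark worth adding is that the detour through freeness and the equivariant Whitehead theorem is avoidable, since the affine homotopy you mention between the last-vertex map and the barycentric homeomorphism is itself $\Sigma_A$--equivariant, so $\georel{\max}$ is directly a $\Sigma_A$--homotopy equivalence.
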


Now we will show that $(R(A),\sqsupseteq)$ is $\Sigma_A$--homotopy equivalent to $\Ch(Y^A)$
Recall that $Y^A$ is non-self-linked (Proposition \ref{p:YANonSelfLinked}) and, therefore, $\Ch(Y^A)$ is a poset (Lemma \ref{l:ChNSL}.(a)). We will write $\bc\leq \be$ whenever $\Ch(Y^A)(\bc,\be)\neq\emptyset$.

\begin{lem}
	Let $\bc=((c_1,{<_1}),\dotsc,(c_l,<_l))\in \Ch(Y^A)$.
	For every $a\in A$ there is a unique integer $h_\bc(a)\in\{1,\dotsc,l\}$ such that $c_{h_\bc(a)}(a)=*$.
\end{lem}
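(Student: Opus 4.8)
The statement asserts that for a cube chain $\bc = ((c_1,{<_1}),\dots,(c_l,{<_l}))$ on $Y^A$, each element $a \in A$ is ``in the $*$-position'' of exactly one cube of the chain. The plan is to exploit the cube chain axioms together with the explicit description of face maps in $Y^A$ (Lemma \ref{l:YAFace} and equations (\ref{e:dYAO})--(\ref{e:dYAI})). Recall that $d^0(c_1) = \bO_{Y^A} = \YCell{\emptyset}{-}{A}$ means $c_1^{-1}(0) \cup c_1^{-1}(*) = A$ and $c_1^{-1}(1) = \emptyset$; dually $d^1(c_l) = \bI_{Y^A}$ means $c_l^{-1}(1) \cup c_l^{-1}(*) = A$ and $c_l^{-1}(0) = \emptyset$; and the gluing condition $d^1(c_i) = d^0(c_{i+1})$ forces $c_i^{-1}(1) \cup c_i^{-1}(*) = c_{i+1}^{-1}(0) = A \setminus c_{i+1}^{-1}(*) \setminus c_{i+1}^{-1}(1)$, equivalently $c_{i+1}^{-1}(1) = c_i^{-1}(1) \cup c_i^{-1}(*)$ and $c_i^{-1}(0) = c_{i+1}^{-1}(0) \cup c_{i+1}^{-1}(*)$.

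First I would record the monotonicity that follows: the sets $U_i := c_i^{-1}(1)$ are strictly increasing in $i$ (with $U_0 := \emptyset$ and $U_{l+1} := A$ by convention, reading $U_i = c_{i-1}^{-1}(1) \cup c_{i-1}^{-1}(*)$), and symmetrically $D_i := c_i^{-1}(0)$ are strictly decreasing. Then for a fixed $a \in A$, track the ``state'' of $a$ as $i$ runs from $1$ to $l$: using the gluing identities, once $a \in c_i^{-1}(1)$ we have $a \in c_j^{-1}(1)$ for all $j > i$, and once $a \in c_i^{-1}(0)$... wait, rather: $a \in c_i^{-1}(0)$ implies $a \in c_j^{-1}(0)$ for all $j < i$. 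So there is a well-defined ``time'' at which $a$ transitions from the $0$-state to the $*$-state (it starts in state $0$ or $*$ in $c_1$, since $c_1^{-1}(1)=\emptyset$) and from the $*$-state to the $1$-state (it ends in state $1$ or $*$ in $c_l$). The key point is that $a$ cannot jump directly from $0$ to $1$: the gluing condition $c_{i+1}^{-1}(1) = c_i^{-1}(1) \cup c_i^{-1}(*)$ says that to be in $c_{i+1}^{-1}(1)$, $a$ must have been in $c_i^{-1}(1) \cup c_i^{-1}(*)$. Hence $a$ passes through the $*$-state, so the set $\{i : c_i(a) = *\}$ is nonempty. And since $c_i^{-1}(1), c_i^{-1}(0)$ are monotone while $c_i^{-1}(*) = A \setminus c_i^{-1}(0) \setminus c_i^{-1}(1)$, the set of indices where $a$ is in the $*$-state is an interval; I must show this interval has length exactly $1$.

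For the length-one claim, I would use the dimension count. We have $\sum_{i=1}^l |c_i^{-1}(*)| = \sum_i n_i = \len(\bc)$. On the other hand, for each $a$ let $J_a = \{i : c_i(a) = *\}$, a nonempty interval by the above; then $\sum_{i=1}^l |c_i^{-1}(*)| = \sum_{a \in A} |J_a|$. Now the altitude function on $\square^{{\vee}\bn}$ (Example \ref{exa:Altitude}(c)) transported to $Y^A$ via $\alt(c,{<}) = |c^{-1}(1)|$ is strictly increasing along the chain cube by cube, and indeed $\alt(\bI_{Y^A}) = |A| = n$. Since each face step $d^1_i$ increases $|c^{-1}(1)|$ by exactly $1$ and there are $\len(\bc)$ such steps needed to go from altitude $0$ to altitude... hmm, actually $\len(\bc) = \sum n_i$ need not equal $n$. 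Let me reconsider: the cleanest argument is that along the chain $a$ moves $0 \to * \to 1$ with the $*$-phase being an interval $J_a$; each index $i \in J_a$ contributes a ``$*$'' but the transition out of state $*$ into state $1$ happens at a single face map $d^1$. Concretely, $a \in c_i^{-1}(*)$ and $a \in c_{i+1}^{-1}(1)$ can both hold (that is the $* \to 1$ transition), and $a \in c_i^{-1}(0)$, $a \in c_{i+1}^{-1}(*)$ can both hold. But can $a$ be in $*$-state for two consecutive cubes $c_i, c_{i+1}$? If $a \in c_i^{-1}(*)$ then $a \notin c_i^{-1}(1)$, and the gluing gives $c_{i+1}^{-1}(0) = c_i^{-1}(0) \cup c_i^{-1}(*) \ni a$, so $a \in c_{i+1}^{-1}(0)$, hence $a \notin c_{i+1}^{-1}(*)$. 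That is exactly the contradiction: $|J_a| \le 1$, and combined with $|J_a| \ge 1$ we get $|J_a| = 1$. I expect this last gluing computation to be the crux; everything else is bookkeeping with the explicit face-map formulas, and no serious obstacle is anticipated.
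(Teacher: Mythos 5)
Your overall route is the same as the paper's: track the value $c_i(a)$ along the chain, use $d^0(c_1)=\bO$ and $d^1(c_l)=\bI$ for the endpoints, and use the gluing condition $d^1(c_i)=d^0(c_{i+1})$ to restrict the possible transitions; the paper records exactly this as the list of admissible pairs $(c_i(a),c_{i+1}(a))\in\{(0,0),(0,*),(*,1),(1,1)\}$, from which existence and uniqueness of the index with $c_i(a)=*$ are immediate.

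However, the crux of your uniqueness step is wrong as written. The gluing gives $c_{i+1}^{-1}(1)=c_i^{-1}(1)\cup c_i^{-1}(*)$ and $c_i^{-1}(0)=c_{i+1}^{-1}(0)\cup c_{i+1}^{-1}(*)$ (which you state correctly in your ``equivalently'' clause; the equality $c_i^{-1}(1)\cup c_i^{-1}(*)=c_{i+1}^{-1}(0)$ just before it is evidently a slip). But in the final paragraph you invoke ``$c_{i+1}^{-1}(0)=c_i^{-1}(0)\cup c_i^{-1}(*)$'', which is false --- the indices are interchanged --- and the conclusion you draw from it, namely that $c_i(a)=*$ forces $a\in c_{i+1}^{-1}(0)$, is also false; it even contradicts your own (correct) monotonicity of the $0$--sets, which would then give $c_i(a)=0$. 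The correct consequence, coming from the identity you did state, is $a\in c_i^{-1}(*)\subseteq c_{i+1}^{-1}(1)$, i.e.\ $c_{i+1}(a)=1$, hence $c_j(a)=1$ for all $j>i$; this still gives $c_{i+1}(a)\neq *$ and in fact yields $|J_a|\le 1$ directly, with no need for the interval argument. With this one-line repair your proof is complete and coincides with the paper's; your nonemptiness argument (no direct jump from $0$ to $1$) is fine as it stands.
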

\begin{proof}
	For all $a\in A$, we have $d^0(c_1)(a)=0$, which implies $c_1(a)\in\{0,*\}$ and, similarly, $c_l(a)\in\{*,1\}$. The condition $d^1(c_i)=d^0(c_{i+1})$ implies that 
	\[
		(c_i(a),c_{i+1}(a))\in\{(0,0), (0,*), (*,1), (1,1)\}.
	\]
	Thus, there exists a unique index $i$ with $c_i(a)=*$.
\end{proof}

For a cube chain $\bc=((c_1,{<_1}),\dots,(c_l,{<_l}))\in \Ch(Y^A)$ define a regular double order ${\dle}_\bc=(\xle_\bc,\yle_\bc)$ on $A$ by
\begin{itemize}
\item
	$a\xle_\bc b$ iff $h_\bc(a)<h_\bc(b)$ (ie, $h({\xle}_\bc)=h_\bc$),
\item
	$a\yle_\bc b$ iff $h_\bc(a)=h_\bc(b)=j$ and $a <_{j} b$ (ie, ${\yle}_\bc=\bigcup_j {<_j}$).
\end{itemize}
For a regular double order ${\dle}=(\xle,\yle)\in R(A)$,
define functions $c^{\dle}_j:A\to\{0,*,1\}$ ($j\in\{1,\dotsc,l({\xle})\}$)
\[
	c^{\dle}_j(a)=
	\begin{cases}
		1 & \text{if $h({\xle})(a)<j$}\\
		* & \text{if $h({\xle})(a)=j$}\\
		0 & \text{if $h({\xle})(a)>j$}
	\end{cases}
\]
and let ${<_j}$ be the restriction of $\yle$ to $(c^{\dle}_j)^{-1}(*)=h({\xle})^{-1}(j)$. Finally, let
\[
	\bc^{\dle}=\big((c^{\dle}_1,{<_1}),\dotsc,(c^{\dle}_{l},{<_{l}})\big)\in\Ch(Y^A),
\] 
where $l=l({\xle})$.

\begin{prp}\label{p:ChRComp}
	The maps
	\[
		\Ch(Y^A)\ni \bc \mapsto {\dle}_\bc \in (R(A),\sqsupseteq)
	\]
	and
	\[
		(R(A),\sqsupseteq)\ni {\dle} \mapsto \bc^{\dle} \in \Ch(Y^A)
	\]
	are mutually inverse $\Sigma_A$--equivariant isomorphisms of posets.
\end{prp}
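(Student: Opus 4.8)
The plan is to verify that the two maps described are well-defined, mutually inverse, order-reversing, and $\Sigma_A$-equivariant, with the order-reversal (hence the choice of $\sqsupseteq$ rather than $\sqsubseteq$) being the one point that requires genuine care.

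First I would check that both maps land where claimed. For $\bc\in\Ch(Y^A)$, the relation $\xle_\bc$ is semi-linear by construction (it is induced by the surjection $h_\bc:A\to\{1,\dotsc,l\}$, where surjectivity follows because each component $c_j$ has $c_j^{-1}(*)\neq\emptyset$ as $n_j>0$ in a cube chain), and $\yle_\bc=\bigcup_j {<_j}$ is a strict partial order since the ${<_j}$ live on the disjoint fibres $h_\bc^{-1}(j)$; moreover $a\xle_\bc b$ forces $h_\bc(a)\neq h_\bc(b)$, so $a$ and $b$ are incomparable under $\yle_\bc$, giving regularity. Conversely, given ${\dle}\in R(A)$, I would check that $\bc^{\dle}$ really is a cube chain on $Y^A$: the functions $c^{\dle}_j$ satisfy $d^0(c^{\dle}_1)=\bO$, $d^1(c^{\dle}_{l})=\bI$, and $d^1(c^{\dle}_j)=d^0(c^{\dle}_{j+1})$ by a direct computation from the definition of $c^{\dle}_j$ together with the face maps of $Y^A$ (equations \eqref{e:dYAO}, \eqref{e:dYAI}); the orders ${<_j}$ on the fibres $h({\xle})^{-1}(j)$ are the restrictions of $\yle$, and regularity of ${\dle}$ guarantees $\yle$ restricted to each fibre is total (any two elements of a fibre are $\xle$-incomparable, hence must be $\yle$-comparable by the double-order axiom), so each $(c^{\dle}_j,{<_j})$ is indeed a cube of $Y^A$.

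Next I would check the two maps are mutually inverse. Starting from $\bc$, reconstructing via ${\dle}_\bc$ recovers $c^{{\dle}_\bc}_j$ as exactly the $j$-th cube of $\bc$: the condition $c_j(a)=*\iff h_\bc(a)=j$, together with $c_j(a)=1\iff h_\bc(a)<j$ and $c_j(a)=0\iff h_\bc(a)>j$ (which follows from the ``staircase'' structure established in the preceding lemma, namely $(c_i(a),c_{i+1}(a))\in\{(0,0),(0,*),(*,1),(1,1)\}$), and the order ${<_j}$ is the restriction of $\yle_\bc=\bigcup_j{<_j}$ to the $j$-th fibre, which is ${<_j}$ itself. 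Going the other way, $h_{\bc^{\dle}}=h({\xle})$ by inspection of where $c^{\dle}_j$ takes the value $*$, so $\xle_{\bc^{\dle}}={\xle}$, and $\yle_{\bc^{\dle}}$ is the union of the fibrewise restrictions of $\yle$, which equals $\yle$ precisely because ${\dle}$ regular means $\yle$ relates only elements in a common $\xle$-fibre. Equivariance is routine: both constructions are defined using only the face structure and the orders, and $\Sigma_A$ acts compatibly on cubes, cube chains, and double orders.

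The main obstacle — and the part I would spell out most carefully — is the order-reversal claim: that $\bc\leq\be$ in $\Ch(Y^A)$ if and only if ${\dle}_\bc\sqsupseteq{\dle}_\be$, i.e. $(\xle_\bc)\supseteq(\xle_\be)$ and $(\yle_\bc)\subseteq(\yle_\be)$. By Lemma \ref{l:ChNSL}(c), $\bc\leq\be$ iff every cube of $\bc$ is a face of some cube of $\be$; using the face criterion in $Y^A$ (Lemma \ref{l:YAFace}), a cube $(c_i,{<_i})$ of $\bc$ being a face of $(e_j,{<_j})$ of $\be$ means $c_i^{-1}(*)\subseteq e_j^{-1}(*)$ with compatible $0$- and $1$-parts and matching restricted orders. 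I would translate this: merging cubes of a chain (passing to a coarser chain $\be$ with larger cubes) \emph{coarsens} the partition of $A$ into fibres, hence \emph{shrinks} $\xle$ (fewer pairs are $\xle$-comparable once more elements share a fibre) and \emph{enlarges} $\yle$ (pairs formerly separated by $\xle$ become $\yle$-comparable, since within the larger cube they are ordered by ${<_j}$). Thus a morphism $\bc\to\be$ forces $\xle_\be\subseteq\xle_\bc$ and $\yle_\bc\subseteq\yle_\be$, which is exactly ${\dle}_\bc\sqsupseteq{\dle}_\be$; the converse follows by running the explicit inverse $\bc^{(-)}$ and checking the resulting $\square$-map is well-defined, again via Lemmas \ref{l:YAFace} and \ref{l:ChNSL}(c). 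I would present this direction with a short explicit computation of how face maps act on the pair $(c^{\dle}_j,{<_j})$, since this is where the combinatorics of ``which partial order gets bigger'' is genuinely being used.
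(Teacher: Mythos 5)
Your proposal is correct and follows essentially the same route as the paper: the well-definedness, bijectivity and equivariance checks are the elementary part, and the order comparison is reduced, via Lemma \ref{l:ChNSL}(c) and the face criterion of Lemma \ref{l:YAFace}, to comparing the fibrations $h_\bc$, $h_\be$ and the fibrewise orders (the paper details the direction ${\dle}_\bc\sqsupseteq{\dle}_\be\Rightarrow\bc\leq\be$ and omits the converse as similar, while you detail the converse). The only point to make fully explicit in the write-up is that ``coarsening the fibres shrinks $\xle$'' is not purely a statement about partitions: one needs the staircase structure together with the $0$/$1$-containments of Lemma \ref{l:YAFace} to see that the induced map between the index sets of $h_\bc$ and $h_\be$ is weakly increasing, which is exactly the mirror image of the computation with the surjection $j$ that the paper spells out in its detailed direction.
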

\begin{proof}
It is elementary to check that these maps are mutually inverse $\Sigma_A$--equivariant bijections.

Fix cube chains $\bc=((c_1,{<_1}),\dotsc,(c_l,{<_l})),\be=((e_1,{<'_1}),\dotsc,(e_m,{<'_m}))\in\Ch(Y^A)$ and let $\dle_\bc,\dle_\be\in R(A)$ be the corresponding double orders.
Assume that $(\dle_\bc)\sqsupseteq (\dle_\be)$. Since $({\xle_\bc})\supseteq ({\xle_\be})$, then there exists an increasing surjective function $j$ such that the diagram
	\[
		\begin{diagram}
			\node{A}
				\arrow{e,t}{h_\bc}
				\arrow{se,b}{h_\be}
			\node{\{1,\dotsc,l({<_\bc})\}}
				\arrow{s,r}{j}
		\\
			\node{}
			\node{\{1,\dotsc,l({<_\be})\}}
		\end{diagram}
	\]
	commutes.
For every $i\in \{1,\dotsc,l({<_\bc})\}$ we have
\[
	c_i(a)=*
	\iff 
	h_{\bc}(a)=i
	\implies
	h_{\be}(a)=j(i)
	\iff
	e_{j(i)}(a)=*,
\]
and for $\mu\in\{0,1\}$,
\begin{align*}
	e_{j(i)}(a)=0
	\iff
	h_{\be}(a)<j(i)
	\implies
	h_{\bc}(a)<i
	\iff
	c_i(a)=0,\\
	e_{j(i)}(a)=1
	\iff
	h_{\be}(a)>j(i)
	\implies
	h_{\bc}(a)>i
	\iff
	c_i(a)=1.
\end{align*}
Moreover, for $a,b\in A$ such that $c_i(a)=c_i(b)=*$ we have (since ${\yle_\bc}\subseteq{\yle_\be}$)
\[
	a<_i b
	\iff
	a\yle_{\bc} b
	\implies
	a\yle_\be b
	\iff
	a<'_{j(i)}b.
\]
As a consequence, $(c_i,<_i)$ is a face of $(e_{j(i)},<'_{j(i)})$ (Lemma \ref{l:YAFace}).
From Lemma \ref{l:ChNSL} follows that $\bc\leq \be$.

An argument that $\bc\leq \be$ implies ${{\dle}_{\bc}}\sqsupseteq {{\dle}_{\be}}$ is similar and will be omitted.
\end{proof}

\section{Nerve lemma}

Let $G$ be a finite group and 
let $X$ be a right free $G$--space.
We assume that $X$ is paracompact and has the $G$--homotopy type of a $G$--CW-complex.
Let $\scU=\{U_i\}_{i\in I}$ be a finite open cover of $X$.

\begin{df}
	A cover $\scU$ is
	\begin{enumerate}[\normalfont(a)]
	\item
		\emph{$G$--equivariant} if there exists an action of $G$ on $I$ such that $U_ig=U_{ig}$,
	\item
		\emph{proper $G$--equivariant} if additionally $U_i\cap U_{ig}=\emptyset$ for every $i\in I$ and every $1\neq g\in G$,
	\item
		\emph{good} if every non-empty intersection of sets in $\scU$ is contractible,
	\item
		\emph{complete} if $\scU$ is closed with respect to non-empty intersections.
	\end{enumerate}
\end{df}

\emph{The nerve} $N\scU$ of $\scU$ is the family of subsets $J\subseteq I$ such that $\bigcap_{i\in J} U_i$ is non-empty.
This is a poset with respect to inclusion.
Thus, $\scU$ can be regarded as a functor
\[
	\scU:(N\scU,\supseteq)\ni J \mapsto \bigcap_{i\in J} U_i \in \Top.
\]

The classical Nerve Lemma states that if $\scU$ is good, then both maps
\begin{equation}\label{e:GNerveSeq}
	|(N\scU,\supseteq)| \overset{p}\longleftarrow \hocolim_{(N\scU,\supseteq)} \scU \overset{q}\longrightarrow \colim_{(N\scU,\supseteq)} \scU\cong X
\end{equation}
are homotopy equivalences.
Here $p$ is induced by the unique transformation of $\scU$ into the trivial functor on $(N\scU,\supseteq)$,
and $q$ is the natural projection from the homotopy colimit into the colimit.

Assume that $\scU$ is good and proper $G$--equivariant.
The action of $G$ on the indexing set $I$ 
induces a $G$--action on $N\scU$.
The properness of $\scU$ implies that this action is free.
As a consequence, $G$ acts freely on $|(N\scU,\supseteq)|$.

There is a presentation of the homotopy colimit as a quotient
\[
	\hocolim_{(N\scU,\supseteq)} \scU
	\simeq 
	\coprod_{n\geq 0}
	\coprod_{\substack{J_0\supseteq \dotsm\supseteq J_n\\J_i\in N\scU}}
	\Big(\Delta^n \times \bigcap_{i\in J_0} U_i\Big)
	/\sim.
\]
The induced action of $G$ on $\hocolim_{(N\scU,\supseteq)} \scU$
is given by the formula
\[
	(J_0\supseteq \dotsm\supseteq J_n,(t_0,\dotsc,t_n),x)g
	=
	(J_0g\supseteq \dotsm\supseteq J_ng,(t_0,\dotsc,t_n),xg).
\]
Clearly both $p$ and $q$ are $G$--equivariant maps;
as a consequence, $\hocolim_{(N\scU,\supseteq)} \scU$ is a free $G$-space.
Thus, all the spaces in (\ref{e:GNerveSeq}) are free and have the homotopy types of $G$--CW-complexes.
By the Nerve Lemma, both $p$ and $q$ are homotopy equivalences,
and, by the equivariant Whitehead theorem,
$G$--homotopy equivalences.
We have proven the following.

\begin{lem}
	Assume that $G$ is a finite group,
	$X$ a paracompact free $G$--space having the homotopy type of a $G$--CW-complex,
	and $\scU$ is a good proper $G$--equivariant cover of $X$.
	Then the spaces $|(N\scU,\supseteq)|$ and $X$ are $G$--homotopy equivalent.\qed
\end{lem}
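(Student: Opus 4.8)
The plan is to deduce the equivariant statement from the classical Nerve Lemma by an equivariant–Whitehead argument, using that every space occurring carries a free $G$--action.

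First I would set up the formal data: $N\scU$ is a poset under inclusion and $\scU$ is a functor $(N\scU,\supseteq)\to\Top$ sending $J$ to $\bigcap_{i\in J}U_i$, whose values are all non-empty (by the definition of $N\scU$) and contractible (since $\scU$ is good). Hence the classical Nerve Lemma applies to
\[
	|(N\scU,\supseteq)| \overset{p}\longleftarrow \hocolim_{(N\scU,\supseteq)} \scU \overset{q}\longrightarrow \colim_{(N\scU,\supseteq)} \scU\cong X,
\]
so both $p$ and $q$ are (non-equivariant) homotopy equivalences; the identification of the colimit with $X$ uses that $\scU$ is a cover.

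Next I would install the $G$--structure. The $G$--action on the index set $I$ restricts to $N\scU$, and properness of the cover forces this $G$--action on $N\scU$, hence on $|(N\scU,\supseteq)|$, to be free. The bar presentation of $\hocolim_{(N\scU,\supseteq)}\scU$ carries the diagonal $G$--action on indices and on the sets $U_i$, with respect to which $p$ and $q$ are $G$--maps; the action on the homotopy colimit is free because $q$ is an equivariant map to the free $G$--space $X$ (or, directly, because the action on the $J_0$--coordinate is already free). Using paracompactness of $X$ together with the contractibility of the pieces $\bigcap_{i\in J}U_i$, all three spaces have the $G$--homotopy type of (free) $G$--CW--complexes; for $X$ itself this is an assumption.

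Finally I would invoke the equivariant Whitehead theorem. A $G$--map between free $G$--CW--complexes which is a non-equivariant weak equivalence is automatically a $G$--weak equivalence: for $H\neq\{1\}$ both fixed--point sets are empty, and for $H=\{1\}$ the fixed--point set is the underlying space. Thus $p$ and $q$ are $G$--homotopy equivalences, and a $G$--homotopy inverse of $p$ composed with $q$ gives a $G$--homotopy equivalence $|(N\scU,\supseteq)|\to X$. The step I expect to be the main obstacle is the bookkeeping that certifies the homotopy colimit (and hence the whole Nerve-Lemma diagram) as lying among free $G$--CW--type spaces — this is precisely what licenses upgrading the classical equivalences to equivariant ones.
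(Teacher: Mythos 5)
Your proposal is correct and follows essentially the same route as the paper: apply the classical Nerve Lemma to the diagram $|(N\scU,\supseteq)|\leftarrow \hocolim\scU\rightarrow X$, observe that properness makes all three spaces free $G$--spaces with $p$ and $q$ equivariant, check they have the $G$--homotopy type of (free) $G$--CW--complexes, and upgrade to $G$--homotopy equivalences via the equivariant Whitehead theorem. No substantive differences.
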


If $\scU$ is additionally complete,
then $X$ is $G$--homotopy equivalent to the nerve of an even smaller category.
The family $\scU$ is a free $G$--poset with respect to inclusion.
There is a pair of order-preserving maps
\begin{align*}
	\Phi:(N\scU,\supseteq) \ni \{U_{i_0},\dotsm,U_{i_r}\} &\mapsto \bigcap_{s=0}^r U_{i_s} \in (\scU,\subseteq)\\
	\Psi: (\scU,\subseteq) \ni U_i & \mapsto \{U_j\in\scU\st U_j\supseteq U_i \}\in (N\scU,\supseteq).
\end{align*}
Here $\Phi$ is well-defined thanks to the completeness of $\scU$.
The composition $\Phi\Psi$ is the identity while for every $\{U_{i_0},\dotsm,U_{i_r}\}\in N\scU$ we have 
\[
	\{U_{i_0},\dotsm,U_{i_r}\} \subseteq \Psi\Phi(\{U_{i_0},\dotsm,U_{i_r}\}),
\]
which defines a natural transformation $\Id \Rightarrow \Psi\Phi$ of endofunctors on $(N\scU,\subseteq)$.

As a consequence, $\Phi$ and $\Psi$ induce homotopy equivalences between the geometric realizations of these posets.
Moreover both $\Phi$ and $\Psi$ are $G$--equivariant. As a consequence, we obtain the following.

\begin{lem}\label{l:NerveComplete}
	Let $G$ be a finite group,
	$X$ a paracompact $G$--space having the $G$--homotopy type of a G-CW-complex,
	 and $\scU$ a good complete proper $G$--equivariant cover of $X$.
	Then $X$ is $G$--homotopy equivalent to $|(\scU,\subseteq)|$.\qed
\end{lem}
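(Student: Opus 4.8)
The plan is to deduce this from the lemma just proved by interposing the nerve poset $|(N\scU,{\supseteq})|$. First I would check that the hypotheses of that preceding lemma hold: the cover $\scU$ is good, proper and $G$--equivariant by assumption, and $X$ is paracompact with the $G$--homotopy type of a $G$--CW--complex; moreover properness already forces the action of $G$ on $X$ to be free, since $xg=x$ together with $x\in U_i$ would give $x\in U_i\cap U_{ig}=\emptyset$. Hence the preceding lemma furnishes a $G$--homotopy equivalence $X\simeq_G|(N\scU,{\supseteq})|$, and the task reduces to producing a $G$--homotopy equivalence $|(N\scU,{\supseteq})|\simeq_G|(\scU,{\subseteq})|$.

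For the latter I would use the pair of order--preserving maps $\Phi$ and $\Psi$ displayed just above the statement. Completeness of $\scU$ is exactly what makes $\Phi$ well defined, since each intersection $\bigcap_s U_{i_s}$ must itself be a member of $\scU$; and $\Phi,\Psi$ are visibly $G$--equivariant, because the $G$--action merely permutes the sets $U_i$ and therefore commutes both with forming intersections and with passing from a set to the family of members of $\scU$ containing it. I would then record the two elementary facts already noted above, namely $\Phi\Psi=\Id$ on the nose and the existence of a natural transformation relating $\Id$ and $\Psi\Phi$ coming from the inclusion $J\subseteq\Psi\Phi(J)$. Applying geometric realization, $|\Phi|$ and $|\Psi|$ become mutually inverse homotopy equivalences: indeed $|\Phi|\circ|\Psi|=\Id$, while $|\Psi|\circ|\Phi|$ is homotopic to $\Id$ through the homotopy carried by the natural transformation. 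Since $\Phi$, $\Psi$ and that natural transformation are all $G$--equivariant, these homotopies are $G$--equivariant as well, so $|\Phi|$ and $|\Psi|$ are $G$--homotopy inverses between $|(N\scU,{\supseteq})|$ and $|(\scU,{\subseteq})|$. Composing with the first equivalence gives $X\simeq_G|(\scU,{\subseteq})|$.

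Most of this is routine bookkeeping: checking that $\Phi$ and $\Psi$ are order--preserving with respect to the indicated orders and satisfy the two relations above, and noting that properness also makes the induced $G$--action on $\scU$ (hence on $|(\scU,{\subseteq})|$) free, so every geometric realization occurring is a finite free $G$--CW--complex. The one point I would single out as requiring genuine care is the assertion that a natural transformation of $G$--equivariant endofunctors of a $G$--poset $P$ induces a $G$--equivariant homotopy between their realizations; this homotopy is the realization of a $G$--equivariant functor $P\times[1]\to P$ (with trivial action on $[1]$), and it is precisely what lets the two homotopy equivalences above be upgraded equivariantly without a separate appeal to the equivariant Whitehead theorem at this step.
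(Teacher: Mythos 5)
Your proposal is correct and follows essentially the same route as the paper: first invoke the preceding nerve lemma to get $X\simeq_G |(N\scU,\supseteq)|$, then use the $G$--equivariant maps $\Phi$, $\Psi$ with $\Phi\Psi=\Id$ and the natural transformation $\Id\Rightarrow\Psi\Phi$ (whose realization gives an equivariant homotopy) to identify $|(N\scU,\supseteq)|$ with $|(\scU,\subseteq)|$ up to $G$--homotopy. Your added remarks that properness forces freeness and that the natural transformation realizes to a $G$--equivariant homotopy are correct and only make explicit what the paper leaves implicit.
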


Now we will apply Lemma \ref{l:NerveComplete} to prove that $|(R^+(A),{\sqsubseteq})|$ and $\OConf(A,\R^2)$ are $\Sigma_A$--homotopy equivalent.
Recall that
\[
	\OConf(A,\R^2)=\{f=(f_x,f_y):A\to \R^2\st \text{$f$ is injective}\},
\]
and $\Sigma_A$ acts freely on $\OConf(A,\R^2)$ from the right by precomposition.

For a double order ${\dle}=({\xle},{\yle})\in D(A)$ define a subset 
\begin{equation}
	U({\dle})=\{(f_x,f_y):A\to \R^2\st \forall_{a,b\in A}\;  \text{$a\xle b\implies  f_x(a)<f_x(b)$ and $a\yle b \implies f_y(a)<f_y(b)$}  \}
\end{equation}
of $\map(A,\R^2)\simeq \R^{2n}$.

\begin{prp}\label{p:UProps}
	For every ${\dle}\in D(A)$, the set $U({\dle})$ is a non-empty, open and convex subset of $\OConf(A,\R^2)$.
\end{prp}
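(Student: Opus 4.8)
The plan is to verify the four assertions of the proposition — openness, convexity, containment in $\OConf(A,\R^2)$, and non-emptiness — essentially separately, since each rests on a different elementary observation and they do not interact.

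For openness and convexity, I would identify $\map(A,\R^2)$ with $\R^{2n}$ via $f\mapsto (f_x(a),f_y(a))_{a\in A}$ and observe that each of the finitely many conditions cutting out $U({\dle})$ — namely $f_x(a)<f_x(b)$ for each pair with $a\xle b$, and $f_y(a)<f_y(b)$ for each pair with $a\yle b$ — is the positivity locus of a linear functional, i.e.\ an open half-space, which is in particular convex. Since $A$ is finite, $U({\dle})$ is a finite intersection of such half-spaces, hence open and convex in $\R^{2n}$; once containment in $\OConf(A,\R^2)$ is established it is then also (relatively) open there, $\OConf(A,\R^2)$ itself being open in $\map(A,\R^2)$.

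For the containment $U({\dle})\subseteq\OConf(A,\R^2)$, I would take $f\in U({\dle})$ and any $a\neq b$ in $A$, and invoke the defining property of a double order: one of $a\xle b$, $b\xle a$, $a\yle b$, $b\yle a$ holds. In the first two cases $f_x(a)\neq f_x(b)$ and in the last two $f_y(a)\neq f_y(b)$, so in every case $f(a)\neq f(b)$; thus $f$ is injective. This is the one and only place the double-order axiom enters. For non-emptiness, the key point is that $f_x$ and $f_y$ may be chosen with no compatibility constraint between them: I would extend the strict partial order $\xle$ on the finite set $A$ to a total strict order, enumerate $A=\{a_1<\dots<a_n\}$ accordingly, and set $f_x(a_i)=i$; likewise extend $\yle$ to a total strict order and let $f_y$ be the analogous injection. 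Then $a\xle b$ forces $f_x(a)<f_x(b)$ and $a\yle b$ forces $f_y(a)<f_y(b)$, so $f=(f_x,f_y)\in U({\dle})$.

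I do not anticipate a genuine obstacle here; the proposition is foundational and the work is routine. The only subtlety worth stating explicitly is that non-emptiness uses nothing about how $\xle$ and $\yle$ relate — each coordinate independently realizes a linear extension of its order — whereas injectivity, and hence the passage from $\map(A,\R^2)$ into $\OConf(A,\R^2)$, is exactly what the double-order condition on $\dle$ buys us.
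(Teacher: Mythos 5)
Your proposal is correct and follows essentially the same route as the paper: injectivity from the double-order comparability axiom, non-emptiness by extending $\xle$ and $\yle$ independently to total orders and taking the corresponding order-preserving enumerations as $f_x$ and $f_y$, and openness and convexity because $U({\dle})$ is a finite intersection of open half-spaces. No gaps.
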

\begin{proof}
	Let $f=(f_x,f_y)\in U({\dle})$.
	For every $a\neq b\in A$ at least one of the conditions $a\xle b$, $b\xle a$, $a\yle b$, $b\yle a$ holds.
	As a consequence, $f_x(a)\neq f_x(b)$ or $f_y(a)\neq f_y(b)$, and then, $U({\dle})\subseteq \OConf(A,\R^2)$,
	
	Extend $\xle$ and $\yle$ to strict total orders ${\xle'}$ and ${\yle'}$, respectively.
	There are unique order-preserving bijections  $h_x:(A,{\xle'})\to (\{1,\dotsc,n\},{<})$, $h_y:(A,{\yle'})\to (\{1,\dotsc,n\},{<})$.
	Then $h=(h_x,h_y)\in U({\dle'})\subseteq U({\dle})$, so that $U({\dle})$ is non-empty.
	Finally, $U({\dle})$ is open and convex since it is an intersection of open half-spaces.
\end{proof}

The following is obvious.
\begin{prp}\label{p:CoverInters}
	Let ${\dle_1},\dotsc,{\dle_k}$ be double orders on $A$ and let ${\dle}=\bar\bigcup_{i=1}^k {\dle_i}$. Then
	\[
		\bigcap_{i=1}^k U(\dle_i)=
		\begin{cases}
			U(\dle) & \text{if $\dle$ is a double order,}\\
			\emptyset & \text{otherwise.}
		\end{cases}
		\qed
	\]
\end{prp}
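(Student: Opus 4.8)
The plan is to unwind both sides of the claimed identity into elementary statements about the coordinates of a configuration $f=(f_x,f_y)\in\map(A,\R^2)$, and then to move relations in and out of the transitive closure $\bar\bigcup$. I would first record the trivial inclusions ${\xle_i}\subseteq\bar\bigcup_{j=1}^k{\xle_j}$ and ${\yle_i}\subseteq\bar\bigcup_{j=1}^k{\yle_j}$, valid for every $i$. Consequently, in the case where $\dle=\bar\bigcup_{j=1}^k\dle_j$ is a double order, every $f\in U(\dle)$ satisfies the defining implications of each $U(\dle_i)$, so $U(\dle)\subseteq\bigcap_{i=1}^k U(\dle_i)$; here $U(\dle)$ is legitimate and is contained in $\OConf(A,\R^2)$ by Proposition \ref{p:UProps}, so this is an inclusion of the right subsets.

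For the reverse inclusion I would unwind the transitive closure. If $a$ is related to $b$ by $\bar\bigcup_{j=1}^k{\xle_j}$, there is a finite chain $a=a_0,a_1,\dotsc,a_m=b$ with $a_{s-1}\xle_{i_s}a_s$ for suitable indices $i_1,\dotsc,i_m$; for $f\in\bigcap_{i=1}^k U(\dle_i)$ this forces $f_x(a_0)<\dotsm<f_x(a_m)$, hence $f_x(a)<f_x(b)$, and symmetrically one gets $f_y(a)<f_y(b)$ whenever $a$ is related to $b$ by $\bar\bigcup_{j=1}^k{\yle_j}$. Taking $b=a$ shows that $\bigcap_{i=1}^k U(\dle_i)=\emptyset$ unless both $\bar\bigcup_{j=1}^k{\xle_j}$ and $\bar\bigcup_{j=1}^k{\yle_j}$ are irreflexive; since these relations are automatically transitive and inherit the covering condition of Definition \ref{d:DoublyOrderedSet} from any single $\dle_i$, irreflexivity of both is exactly the condition that $\dle$ be a double order. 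This settles the ``otherwise'' branch, and when $\dle\in D(A)$ the same chain computation shows that each $f\in\bigcap_{i=1}^k U(\dle_i)$ respects $\dle$, i.e.\ lies in $U(\dle)$; combined with the inclusion established above this gives the claimed equality.

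I do not expect a genuine obstacle here — this is why the proposition is stated as obvious — and the only point deserving a moment's attention is that the case split is exhaustive: membership in $\bigcap_{i=1}^k U(\dle_i)$ already forces both transitive closures to be irreflexive, so either $\dle$ is a double order or the intersection is empty, with no intermediate possibility, and in particular there is no need to assign a meaning to $U(\dle)$ when $\dle\notin D(A)$.
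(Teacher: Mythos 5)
Your proof is correct, and it simply spells out the routine verification that the paper itself omits (Proposition \ref{p:CoverInters} is stated there with the remark ``The following is obvious'' and no written proof): one inclusion from ${\dle_i}\subseteq{\dle}$, the other by unwinding chains in the transitive closure, with irreflexivity of the closures exactly distinguishing the two cases. Nothing further is needed.
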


\begin{prp}\label{p:CovProp}
	The family $\scU_A=\{U(\dle)\}_{{\dle}\in R^+(A)}$ is a proper $\Sigma_A$--equivariant good complete cover of $\OConf(A,\R^2)$.
\end{prp}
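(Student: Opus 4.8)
The plan is to verify the five required properties of $\scU_A=\{U(\dle)\}_{\dle\in R^+(A)}$ one at a time, leaning on Propositions \ref{p:UProps} and \ref{p:CoverInters} for the geometric input and on the combinatorics of Section \ref{s:DOS} for the bookkeeping. Each open set $U(\dle)$ is non-empty, open and convex (hence contractible) by Proposition \ref{p:UProps}; this immediately yields \emph{goodness}, provided we also know that the non-empty intersections stay inside the indexing family, which is the completeness point.

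First I would prove that $\scU_A$ is a \emph{cover} of $\OConf(A,\R^2)$. Given an injective $f=(f_x,f_y):A\to\R^2$, perturb generically (or argue directly) to reduce to the case where $f_x$ and $f_y$ are each injective; define strict total orders $\xle$ and $\yle$ on $A$ by $a\xle b\iff f_x(a)<f_x(b)$ and $a\yle b\iff f_y(a)<f_y(b)$. The pair $\dle=(\xle,\yle)$ is then a double order, and since both components are \emph{total} (hence semi-linear) and total orders trivially satisfy the regularity incomparability condition, $\dle$ is in fact regular, so $\dle\in R(A)\subseteq R^+(A)$; clearly $f\in U(\dle)$. For a general $f$ with, say, $f_x$ not injective, one instead defines $\xle$ from $f_x$ as a semi-linear order and $\yle$ from $f_y$, breaking ties among an $f_x$-level set using $f_y$ (which is injective on that level set because $f$ is injective); the resulting $\dle$ is regular and contains $f$. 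Either way $f\in\bigcup\scU_A$.

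Next, \emph{completeness}: take $\dle_1,\dots,\dle_k\in R^+(A)$ with $\bigcap_i U(\dle_i)\neq\emptyset$. By Proposition \ref{p:CoverInters} the union $\dle=\bar\bigcup_i\dle_i$ is then a double order and $\bigcap_i U(\dle_i)=U(\dle)$. Since each $\dle_i$ is semi-regular, it is itself a union of regular double orders, so $\dle$ is a union of regular double orders, i.e.\ $\dle\in R^+(A)$, and $U(\dle)\in\scU_A$. This also closes the \emph{goodness} argument: every non-empty finite intersection equals some $U(\dle)$, which is convex and non-empty, hence contractible. For \emph{$\Sigma_A$-equivariance}, note $\Sigma_A$ permutes $R^+(A)$ (Proposition \ref{l:FreeActionDA} records the action), and one checks directly from the definitions of $U(\dle)$ and of the right $\Sigma_A$-action on $\map(A,\R^2)$ that $U(\dle)\sigma=U(\dle\sigma)$; indeed $f\sigma\in U(\dle)$ iff $a\xle b\Rightarrow f_x(\sigma(a))<f_x(\sigma(b))$ iff $f\in U(\dle\sigma)$. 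Finally, \emph{properness}: if $U(\dle)\cap U(\dle\sigma)\neq\emptyset$ then by completeness $\dle\mathbin{\bar\cup}(\dle\sigma)$ is a double order, and $\dle$ contains a regular double order $\dle'$; applying Proposition \ref{p:DOAction} to $\dle'$ — after observing that $\dle'\mathbin{\bar\cup}(\dle'\sigma)\subseteq\dle\mathbin{\bar\cup}(\dle\sigma)$ is still a double order — forces $\sigma=\id$.

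The main obstacle is the covering step, specifically handling configurations where $f_x$ (or $f_y$) fails to be injective: one must produce a \emph{regular} double order from such an $f$, and the subtle point is checking that the order induced on each $h(\xle)$-fiber by $f_y$ is well-defined and strict, which uses injectivity of $f$ exactly on those fibers. Everything else is a direct unwinding of definitions together with the already-established Propositions \ref{p:UProps}, \ref{p:CoverInters}, \ref{p:DOAction} and \ref{l:FreeActionDA}.
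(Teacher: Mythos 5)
Your proposal is correct and follows essentially the same route as the paper: the cover is produced by the tie-breaking order (your ``general $f$'' construction is exactly the paper's ${\dle}_f$, with $a\yle_f b$ iff $f_x(a)=f_x(b)$ and $f_y(a)<f_y(b)$), completeness and goodness come from Propositions \ref{p:CoverInters} and \ref{p:UProps} (your explicit check that a union of semi-regular orders is again semi-regular is a point the paper leaves implicit), equivariance is a direct check, and properness uses a regular ${\dle'}\subseteq{\dle}$ together with Propositions \ref{p:DOAction} and \ref{p:CoverInters}, just phrased contrapositively. One slip: a pair of total orders is \emph{not} regular --- if $a\xle b$ then $a,b$ are also $\yle$-comparable, which regularity forbids --- it is only semi-regular; so your ``generic'' branch is misjustified (and the ``perturb generically'' idea cannot prove a covering statement anyway), but this costs nothing because the tie-breaking construction already handles every $f$, whether or not $f_x$ is injective. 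Finally, your displayed verification of $U({\dle})\sigma=U({\dle}\sigma)$ contains a harmless $\sigma$ versus $\sigma^{-1}$ mix-up; the identity itself, which is what equivariance requires, is correct.
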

\begin{proof}
	For arbitrary $f=(f_x,f_y)\in \OConf(A,\R^2)$ define partial orders $\xle_f$ and $\yle_f$ by
	\begin{itemize}
	\item	
		$a\xle_f b$ iff $f_x(a)<f_x(b)$,
	\item
		$a \yle_f b$ iff $f_x(a)=f_x(b)$ and $f_y(a)<f_y(b)$.
	\end{itemize}
	Clearly ${\dle}_f=(\xle_f, \yle_f)$ is a regular double order
	and $f\in U_{{\dle}_f}$;
	therefore, $\{U({\dle})\}_{{\dle}\in R(A)}\subseteq \{U({\dle})\}_{{\dle}\in R^+(A)}$
	is already a cover of $\OConf(A,\R^2)$.
	Propositions \ref{p:CoverInters} and \ref{p:UProps} imply that
	$\scU_A$ is complete and good, respectively.

	Clearly $U({\dle})\sigma=U({\dle}\sigma)$ for all ${\dle}\in D(A)$,
	and permutations of $A$ preserve semi-regularity of double orders,
	so $\{U(\dle)\}_{{\dle}\in R^+(A)}$ is $\Sigma_A$--equivariant.
		
	For every ${\dle}\in R^+(A)$ there exists ${\dle'}\in R(A)$ such that ${\dle'}\subseteq {\dle}$.
	Hence, for $1\neq\sigma\in\Sigma_A$,
	${\dle'} \mathop{\bar\cup} {\dle'}\sigma$	is not a double order (Proposition \ref{p:DOAction});
	therefore
	\[
		U({\dle})\cap U({\dle}\sigma)
		\subseteq 
		U({\dle'})\cap U({\dle'}\sigma)
		=
		\emptyset
	\]
	by Proposition \ref{p:CoverInters}.
\end{proof}

Note that $U({\dle_1})\subseteq U({\dle_2})$ if and only if ${\dle_1}\supseteq{\dle_2}$.
Immediately from Proposition \ref{p:CovProp} and Lemma \ref{l:NerveComplete} we obtain.

\begin{prp}\label{p:RConfEq}
	The $\Sigma_A$--spaces $|(R^+(A),{\supseteq})|$ and $\OConf(A,\R^2)$ are $\Sigma_A$--homotopy equivalent.\qed
\end{prp}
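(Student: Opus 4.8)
The plan is to read Proposition \ref{p:RConfEq} off Lemma \ref{l:NerveComplete} by taking $G=\Sigma_A$, $X=\OConf(A,\R^2)$, and the cover $\scU_A=\{U(\dle)\}_{\dle\in R^+(A)}$. First I would verify that $X$ satisfies the standing hypotheses of Lemma \ref{l:NerveComplete}: $\OConf(A,\R^2)$ is an open subset of $\map(A,\R^2)\cong\R^{2n}$, hence metrizable and in particular paracompact, and $\Sigma_A$ acts on it freely by precomposition; since this action is smooth and free, $X$ has the $\Sigma_A$-homotopy type of a (free) $\Sigma_A$-CW-complex. Next, the hypotheses on the cover are precisely Proposition \ref{p:CovProp}, which states that $\scU_A$ is a good, complete, proper $\Sigma_A$-equivariant cover of $\OConf(A,\R^2)$. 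Hence Lemma \ref{l:NerveComplete} applies verbatim and produces a $\Sigma_A$-homotopy equivalence between $X$ and $|(\scU_A,\subseteq)|$.

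The remaining step is to identify the indexing poset $(\scU_A,\subseteq)$ with $(R^+(A),\supseteq)$ as $\Sigma_A$-posets. The map $\dle\mapsto U(\dle)$ is surjective onto $\scU_A$ by definition, and by the observation recorded just before the statement, $U(\dle_1)\subseteq U(\dle_2)$ if and only if $\dle_1\supseteq\dle_2$; consequently this map is order-preserving as a map $(R^+(A),\supseteq)\to(\scU_A,\subseteq)$ and is injective, hence an isomorphism of posets. It is $\Sigma_A$-equivariant because $U(\dle)\sigma=U(\dle\sigma)$ for all $\dle\in D(A)$ and $\sigma\in\Sigma_A$. Therefore $|(\scU_A,\subseteq)|$ and $|(R^+(A),\supseteq)|$ are $\Sigma_A$-homeomorphic, and combining this with the previous paragraph yields the claimed $\Sigma_A$-homotopy equivalence $|(R^+(A),\supseteq)|\simeq\OConf(A,\R^2)$.

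I do not expect any serious obstacle here: once Proposition \ref{p:CovProp} and Lemma \ref{l:NerveComplete} are available, the argument is essentially bookkeeping, and it mirrors the passage from Proposition \ref{p:CovProp} to Lemma \ref{l:NerveComplete} already carried out for execution spaces in the preceding sections. The only point requiring a moment's care is checking that $\OConf(A,\R^2)$ genuinely has the $\Sigma_A$-homotopy type of a $\Sigma_A$-CW-complex; the cleanest justification is that a free smooth action of a finite group on a smooth manifold admits an equivariant triangulation, which is classical, and if one wishes to avoid even that, the homotopy colimit model constructed inside the proof of Lemma \ref{l:NerveComplete} already exhibits a free $\Sigma_A$-CW replacement of $X$.
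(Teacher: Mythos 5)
Your proposal is correct and follows exactly the paper's own route: the paper proves Proposition \ref{p:RConfEq} by noting $U(\dle_1)\subseteq U(\dle_2)$ iff $\dle_1\supseteq\dle_2$ and then invoking Proposition \ref{p:CovProp} together with Lemma \ref{l:NerveComplete}. Your additional check that $\OConf(A,\R^2)$ is paracompact and has the $\Sigma_A$-homotopy type of a $\Sigma_A$-CW-complex simply makes explicit a standing hypothesis of Lemma \ref{l:NerveComplete} that the paper leaves implicit.
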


\section{Applications}

In this section we present several application of the results obtained above.

\subsection*{Invariants of directed paths on $\square$--sets}

\newcommand\PiInv[3]{\mathrm{IB}_{#2}(#1,#3)}
\newcommand\PiInvS[3]{\mathrm{I\Sigma}_{#2}(#1,#3)}

Every bi-pointed $\square$--set $K$ admits a unique bi-pointed $\square$--map $K\to Z$, which in turn induces a map
\[
	\vP(K)_\bO^\bI \to \vP(Z)_\bO^\bI.
\]
Thus, for every $n\geq 0$ we have a map
\[
	R_{n}(K):
	\vP(K;n)
	\cong
	\vP(\tilde{K}_n)_\bO^\bI
	\to
	\vP(\tilde{Z}_n)_\bO^\bI
	\xrightarrow{\simeq}
	\UConf(n,\R^2),
\]
which is functorial with respect to bi-pointed $\square$--maps.
This allows to define some invariants of bi-pointed $\square$--sets, or rather their execution spaces.
\begin{enumerate}
\item
	For every $\alpha\in\vP(K;n)$, there is a representation (ie, a conjugacy class of homomorphism of groups)
	\[
		\PiInv{K}{n}{\alpha}:\pi_1(\vP(K;n), \alpha)\to \pi_1(\UConf(n,\R^2))\simeq B_n,
	\]
	in the braid group on $n$ threads.
	The composition of $\PiInv{K}{n}{\alpha}$ with the canonical surjection $B_n\to \Sigma_n$ will be denoted by $\PiInvS{K}{n}{\alpha}$.
\item
	Every element $\omega\in H^k(B_n)\simeq H^k(\UConf(n,\R^2))$ defines a ``characteristic class", namely, the cohomology class $R_n(K)^*(\omega)\in H^k(\vP(K;n))$.
\end{enumerate}

It turns out that these invariants are trivial if $K$ is a sculpture \cite{Pratt} or a Euclidean complex \cite{RZ} (as shown in \cite{Z-Cub2}, these classes are equivalent).
\begin{prp}
	If $K\subseteq \square^n$, then the map $R_n(K)$ is null-homotopic.
	In particular, both $\PiInv{K}{n}{\alpha}$ and $R_n(K)^*(\omega)$ are trivial, for all $\alpha\in \vP(K;n)_\bO^\bI$ and all $\omega\in H^k(\UConf(n,\R^2))$.
\end{prp}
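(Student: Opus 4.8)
The plan is to factor the map $R_n(K)$ through the execution space of the full standard cube, which is contractible. First I would observe that, $K$ being a bi-pointed sub-$\square$--set of $\square^n$, the unique bi-pointed $\square$--map $K\to Z$ is the composite of the inclusion $K\hookrightarrow\square^n$ with the unique bi-pointed $\square$--map $\square^n\to Z$. Since the maps $R_m(-)$ are natural with respect to bi-pointed $\square$--maps, this produces a commuting triangle
\[
	\vP(K;n)_\bO^\bI \longrightarrow \vP(\square^n;n)_\bO^\bI \xrightarrow{\;R_n(\square^n)\;} \UConf(n,\R^2),
\]
in which the first arrow is induced by the inclusion and the composite equals $R_n(K)$. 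It therefore suffices to show that $\vP(\square^n;n)_\bO^\bI$ is contractible.

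To do that I would use the altitude function $\alt(c)=|c^{-1}(1)|$ on $\square^n$, for which $\alt(\bI_{\square^n})=n$. Consequently every d-path in $\vP(\square^n)_\bO^\bI$ has length exactly $n$ (equivalently $\tilde\square^n_n\cong\square^n$), so $\vP(\square^n;n)_\bO^\bI=\vP(\square^n)_\bO^\bI$. As $\georel{\square^n}=\vI^n$, this space is $\vP(\vI^n)_\bO^\bI$, the set of tuples $(\alpha_1,\dotsc,\alpha_n)$ of nondecreasing paths $[0,1]\to[0,1]$ from $0$ to $1$; each coordinate factor is convex, hence the product is contractible (via the straight-line homotopy to the diagonal path $t\mapsto(t,\dotsc,t)$). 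Thus $R_n(K)$ factors through a contractible space and is null-homotopic.

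It remains to deduce the two stated consequences. A null-homotopic map induces the trivial homomorphism on fundamental groups for every basepoint $\alpha\in\vP(K;n)_\bO^\bI$, so the conjugacy class $\PiInv{K}{n}{\alpha}$ is trivial, and hence so is its image $\PiInvS{K}{n}{\alpha}$ under $B_n\to\Sigma_n$; and it induces the zero map on reduced cohomology, so $R_n(K)^*(\omega)=0$ for every $\omega\in H^k(\UConf(n,\R^2))$ of positive degree. I do not expect a genuine obstacle here; the one point worth double-checking is that the factorization $K\to\square^n\to Z$ is compatible with the full chain of natural identifications defining $R_n$ — the length-$n$ covering $\tilde{(-)}_n$ (Proposition \ref{p:LenDecTilde}) and the homotopy equivalence $\vP(\tilde{Z}_n)_\bO^\bI\simeq\UConf(n,\R^2)$ of Theorem \ref{t:Main} — which is immediate from the naturality of each step in its construction.
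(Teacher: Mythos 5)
Your proposal is correct and follows essentially the same route as the paper, which simply notes that $R_n(K)$ factors through the contractible space $\vP(\square^n)_\bO^\bI$; you have merely spelled out the naturality of the factorization, the convexity argument for contractibility of $\vP(\vI^n)_\bO^\bI$, and the deduction of the two consequences.
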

\begin{proof}
	$R_n(K)$ factors through $\vP(\square^n)_\bO^\bI$, which is a contractible space.
\end{proof}

\obsol{
\begin{cor}
	Every d-path in both $\vP(Y^A)_\bO^\bI$ and $\vP(\tilde{Z}_n)_\bO^\bI$ is tame.
\end{cor}
}

\subsection*{Spaces of directed paths on $\square$--sets}
Here we prove that every execution space of a finite bi-pointed $\square$--set has the homotopy type of a CW-complex, which extends the result of Raussen \cite{Raussen-Trace}.
Furthermore, every such space can be realized, up to finite covering, as the execution space of a finite non-self-linked $\square$--set.

\begin{prp}\label{p:Pullbacks}
	Let
	\[
		\begin{diagram}
			\node{K\times_M L}
				\arrow{s}
				\arrow{e}
			\node{L}
				\arrow{s,r}{q}
		\\
			\node{K}
				\arrow{e,t}{p}
			\node{M}
		\end{diagram}
	\]
	be a pullback diagram of $\square$--sets. If $K$ and $L$ are finite, then
	\[
		\begin{diagram}
			\node{|K\times_M L|}
				\arrow{s}
				\arrow{e}
			\node{|L|}
				\arrow{s,r}{q}
		\\
			\node{|K|}
				\arrow{e,t}{p}
			\node{|M|}
		\end{diagram}
	\]	
	is a pullback diagram.
\end{prp}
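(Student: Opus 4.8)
The statement to prove is that geometric realization of precubical sets preserves pullbacks of \emph{finite} precubical sets. The plan is to reduce the claim to a pointwise verification on underlying sets, using the canonical presentation of points of a geometric realization.

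First I would recall that $|K\times_M L|$ always carries a canonical d-map to the pullback $|K|\times_{|M|}|L|$, since $|{-}|$ is a functor and the pullback square in $\square\Set$ maps to the defining square of $|K|\times_{|M|}|L|$. So the only thing to check is that this comparison map is a homeomorphism; since all spaces involved are CW-complexes (finite in fact), it suffices to check it is a continuous bijection with continuous inverse, and because the domain is compact Hausdorff (finiteness of $K$ and $L$) and the codomain is Hausdorff, a continuous bijection is automatically a homeomorphism. Thus the heart of the matter is bijectivity on points.

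Next I would use the canonical presentation: every point $p\in|K|$ is uniquely $[c;\bx]$ with $c=\carr(p)\in K[m]$ and $\bx\in(0,1)^m$. A point of the pullback $|K|\times_{|M|}|L|$ is a pair $([c;\bx],[d;\by])$ with $|p|([c;\bx])=|q|([d;\by])$ in $|M|$. Writing the right-hand side in canonical form and using that the maps induced by $\square$--maps send a canonical presentation $[c;\bx]$ with $\bx\in(0,1)^m$ to $[p(c);\bx]$ — whose carrier is a face of $p(c)$ obtained by collapsing exactly the coordinates where $\bx$ already lies on the boundary, but here $\bx\in(0,1)^m$ so $\carr(|p|([c;\bx]))=p(c)$ when $p(c)$ itself need not be nondegenerate — I would argue that equality $|p|([c;\bx])=|q|([d;\by])$ forces $p(c)=q(d)$ in $M[m]$ and $\bx=\by$ after matching coordinates. (Here one must be a little careful: a $\square$--map need not preserve dimension-nondegeneracy, so $p(c)$ might itself be a cube all of whose coordinates are ``active'' yet still equal to $q(d)$; the point is that the index set $c^{-1}(*)$ is identified with $\{1,\dots,m\}$ and the equality in $|M|$ pins down the carrier and the barycentric coordinates simultaneously.) Consequently $m=\dim c=\dim d$, and the pair $(c,d)\in K[m]\times L[m]$ satisfies $p(c)=q(d)$, hence defines a unique cube $(c,d)\in(K\times_M L)[m]$ — because pullbacks in $\square\Set$ are computed dimensionwise in $\Set$ — and $[(c,d);\bx]\in|K\times_M L|$ maps to $([c;\bx],[d;\by])$. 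Uniqueness of the preimage follows from uniqueness of the canonical presentation in $|K\times_M L|$. This gives bijectivity.

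The main obstacle I expect is the bookkeeping around degenerate images: a $\square$--map $p:K\to M$ may send a nondegenerate cube $c$ to a cube $p(c)$ which, while still a genuine element of $M[m]$, behaves ``degenerately'' under further face operations, so one cannot simply say ``carrier of $|p|([c;\bx])$ is $p(c)$'' without justification. The clean way around this is to observe that $\square$--sets have \emph{no} degeneracies, so there is no collapsing at all: for $\bx\in(0,1)^m$ the presentation $[p(c);\bx]$ is \emph{already} the canonical presentation of its point, with carrier exactly $p(c)$. With that remark the dimension-matching and coordinate-matching are immediate, and the rest is the routine identification of $(K\times_M L)[m]$ with $K[m]\times_{M[m]}L[m]$ together with the compactness argument upgrading the continuous bijection to a homeomorphism. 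Finiteness of $K$ and $L$ is used only to guarantee compactness of $|K\times_M L|$ (equivalently, to avoid point-set pathologies in the pullback topology); the bijection itself holds without it.
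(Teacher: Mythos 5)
Your proposal is correct and follows essentially the same route as the paper: construct the canonical comparison map $f:|K\times_M L|\to |K|\times_{|M|}|L|$, use uniqueness of canonical presentations (and the absence of degeneracies in $\square$--sets) to show $f$ is a bijection, and upgrade to a homeomorphism via compactness of $|K\times_M L|$ coming from finiteness of $K$ and $L$.
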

\begin{proof}
	The geometric realizations of projections $K\times_M L\to K$ and $K\times_M L\to L$
	induce the map $f:|K\times_M L|\to |K|\times_{|M|}|L|$
	such that
	\[
		f([(c,c');x_1,\dotsc,x_n])=([c;x_1,\dotsc,x_n], [c';x_1,\dotsc,x_n])
	\]
	for $c\in K[n]$, $c'\in L[n]$ and $p(c)=q(c')$.
	Every point $x\in |K|\times|L|$ has a canonical presentation $([c;x_1,\dotsc,x_k],[c';y_1,\dotsc,y_l])$ and $x\in |K|\times_{|M|}|L|$ if and only if $k=l$, $(x_i)=(y_i)$ and $p(c)=q(c')$. Thus, $f$ is a bijection and then a homeomorphism, since $|K\times_M L|$ is compact. 
\end{proof}

\begin{prp}\label{p:Pullbacks2}
	The functor $\vP(-)_\bO^\bI:\dTop_*^* \to \Top$ preserve pullbacks.
\end{prp}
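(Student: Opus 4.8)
The plan is to show that the functor $\vP(-)_\bO^\bI:\dTop_*^*\to\Top$ preserves pullbacks by an essentially pointwise argument: a directed path into a pullback d-space is the same data as a pair of compatible directed paths into the factors. First I would recall that $\dTop$ is complete, and that for a diagram $q:L\to M\leftarrow K:p$ of d-spaces the pullback $K\times_M L$ has underlying topological space the ordinary pullback $|K|\times_{|M|}|L|$ (the topological pullback, carried out in $\Top$), with a path $\gamma=(\gamma_K,\gamma_L):[0,1]\to K\times_M L$ declared directed precisely when both coordinate paths $\gamma_K\in\vP(K)$ and $\gamma_L\in\vP(L)$ are directed; one checks immediately that this family contains the constants and is closed under concatenation and increasing reparametrization, so it is a legitimate d-structure, and that it has the required universal property among d-spaces. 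The base points of $K\times_M L$ are $\bO_{K\times_M L}=(\bO_K,\bO_L)$ and $\bI_{K\times_M L}=(\bI_K,\bI_L)$, which are compatible over $M$ since $p,q$ are bi-pointed.

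Next I would write down the comparison map. The projections $K\times_M L\to K$ and $K\times_M L\to L$ are bi-pointed d-maps, so they induce continuous maps $\vP(K\times_M L)_\bO^\bI\to\vP(K)_\bO^\bI$ and $\to\vP(L)_\bO^\bI$ that agree after composing to $\vP(M)_\bO^\bI$; these assemble into a continuous map
\[
	\Theta:\vP(K\times_M L)_\bO^\bI\longrightarrow \vP(K)_\bO^\bI\times_{\vP(M)_\bO^\bI}\vP(L)_\bO^\bI .
\]
I would then argue $\Theta$ is a bijection: a point of the right-hand side is a pair $(\alpha,\beta)$ with $\alpha\in\vP(K)_\bO^\bI$, $\beta\in\vP(L)_\bO^\bI$ and $\vP(p)(\alpha)=\vP(q)(\beta)$, i.e.\ $p\circ\alpha=q\circ\beta$ pointwise; by the description of the d-structure on the pullback in the previous paragraph, the map $t\mapsto(\alpha(t),\beta(t))$ lands in $|K|\times_{|M|}|L|$ and is a directed path there, hence a point of $\vP(K\times_M L)_\bO^\bI$, and this assignment is a two-sided inverse to $\Theta$ on underlying sets. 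Finally, continuity of $\Theta^{-1}$: the topology on $\vP(X)=\map([0,1],X)$ is the compact–open topology; the right-hand fibre-product carries the subspace topology from $\map([0,1],|K|)\times\map([0,1],|L|)$, which equals $\map([0,1],|K|\times|L|)$, and the subspace $\vP(K\times_M L)_\bO^\bI\subseteq\map([0,1],|K|\times_{|M|}|L|)\subseteq\map([0,1],|K|\times|L|)$ carries the same subspace topology; so $\Theta$ is a homeomorphism onto its image and that image is exactly the fibre product. Hence $\vP(-)_\bO^\bI$ sends the pullback square to a pullback square in $\Top$.

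The main obstacle, and the only place a small amount of care is genuinely needed, is the identification of topologies in the last step: one must know that $\map([0,1],Y_1\times_Z Y_2)\cong\map([0,1],Y_1)\times_{\map([0,1],Z)}\map([0,1],Y_2)$ as spaces, which holds because $[0,1]$ is locally compact Hausdorff so $\map([0,1],-)$ commutes with finite limits of spaces (equivalently, the exponential / product–hom adjunction behaves well). Combined with the observation that the d-structure on a pullback is, by construction, \emph{detected} coordinatewise, this makes $\Theta$ a homeomorphism. I would remark that nothing here used finiteness of $K$ or $L$ — in contrast to Proposition \ref{p:Pullbacks}, where finiteness (hence compactness of realizations) was needed to identify $|K\times_M L|$ with the topological pullback $|K|\times_{|M|}|L|$; here we take that identification of d-spaces as the definition of the pullback in $\dTop_*^*$ and simply transport it through $\vP(-)_\bO^\bI$. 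Composing Propositions \ref{p:Pullbacks} and \ref{p:Pullbacks2} then yields that, for finite $K,L$, the square of execution spaces associated to a pullback of bi-pointed $\square$--sets is again a pullback.
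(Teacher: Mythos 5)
Your proof is correct, but it takes a genuinely different and much more hands-on route than the paper. The paper's entire proof is the observation that $\vP(-)_\bO^\bI=\dTop_*^*(\vI,-)$: the functor is (co)represented by the bi-pointed directed interval, and hom-functors preserve limits, hence pullbacks. Your argument unpacks this by hand: you describe the pullback in $\dTop_*^*$ explicitly (underlying topological pullback, d-structure detected coordinatewise), build the comparison map $\Theta$, check bijectivity pointwise, and then identify the topologies via the behaviour of $\map([0,1],-)$ under fibre products. What the paper's approach buys is brevity and conceptual clarity; what yours buys is that it actually records the topological content that the representability one-liner leaves implicit --- representability immediately gives a bijection of underlying sets, but the statement is about a functor into $\Top$, and your last step (that $\map([0,1],Y_1\times_Z Y_2)\cong\map([0,1],Y_1)\times_{\map([0,1],Z)}\map([0,1],Y_2)$ with compact-open topologies, and that the path-space topology on the pullback is the subspace/fibre-product topology) is precisely the verification needed to upgrade that bijection to a homeomorphism. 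One small quibble: local compactness of $[0,1]$ is not really the operative hypothesis there --- compatibility of the compact-open topology with finite products and with subspaces of the target holds quite generally --- but invoking it is harmless. Your closing remark, that finiteness plays no role here in contrast to Proposition \ref{p:Pullbacks}, matches the paper's usage exactly.
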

\begin{proof}
		We have $\vP(-)_\bO^\bI=\dTop_*^*(\vI,-)$.
\end{proof}

\begin{lem}\label{l:ZnProd}
	The map $\vP(K\times \tilde{Z}_n)_\bO^\bI\to \vP(K;n)_\bO^\bI$ induced by the projection on the first factor is a homeomorphism.
\end{lem}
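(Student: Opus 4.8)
The plan is to recognize the projection $\vP(K\times\tilde Z_n)_\bO^\bI\to\vP(K;n)_\bO^\bI$ as a restriction of the first projection of a pullback square, and then to observe that this pullback degenerates because the factor $\tilde Z_n$ records the length. Since $Z$ is a final object in $\square\Set_*^*$, the unique $\square$--map $K\to Z$ exhibits the product $K\times\tilde Z_n$ as the pullback $K\times_Z\tilde Z_n$ of $K\to Z\leftarrow\tilde Z_n$ (both in $\square\Set$ and, with the obvious basepoints $(\bO_K,z^0_0)$, $(\bI_K,z^0_n)$, in $\square\Set_*^*$), and ``projection onto the first factor'' is exactly the pullback projection to $K$. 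I would first invoke Proposition \ref{p:Pullbacks} — which requires $K$ to be finite, the only case relevant in this subsection — together with the fact that $\tilde Z_n$ is finite, to get a homeomorphism of bi-pointed d-spaces $|K\times\tilde Z_n|\cong|K|\times_{|Z|}|\tilde Z_n|$, and then Proposition \ref{p:Pullbacks2} to obtain
\[
	\vP(K\times\tilde Z_n)_\bO^\bI\;\cong\;\vP(K)_\bO^\bI\times_{\vP(Z)_\bO^\bI}\vP(\tilde Z_n)_\bO^\bI,
\]
compatibly with the projections onto $\vP(K)_\bO^\bI$.

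It then remains to identify this pullback with $\vP(K;n)_\bO^\bI$. Write $f\colon\vP(K)_\bO^\bI\to\vP(Z)_\bO^\bI$ and $g\colon\vP(\tilde Z_n)_\bO^\bI\to\vP(Z)_\bO^\bI$ for the two legs. The length decomposition \eqref{e:LengthDecomp} presents $\vP(Z)_\bO^\bI=\coprod_{m\ge 0}\vP(Z;m)_\bO^\bI$ as a disjoint union of open and closed summands; since realizations of $\square$--maps preserve length, $f$ sends $\vP(K;m)_\bO^\bI$ into $\vP(Z;m)_\bO^\bI$, so $f^{-1}\bigl(\vP(Z;n)_\bO^\bI\bigr)=\vP(K;n)_\bO^\bI$. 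On the other side, every element of $\vP(\tilde Z_n)_\bO^\bI$ has length $n$ (as $\tilde Z_n$ carries an altitude function with $\alt(\bO_{\tilde Z_n})=0$ and $\alt(\bI_{\tilde Z_n})=n$), so $g$ factors through $\vP(Z;n)_\bO^\bI$, and by Proposition \ref{p:LenDecTilde} applied to $Z$ (compared with \eqref{e:LengthDecomp}) it is in fact a homeomorphism of $\vP(\tilde Z_n)_\bO^\bI$ onto that clopen summand. Hence a pair $(\gamma,\epsilon)$ lies in the pullback if and only if $\gamma\in\vP(K;n)_\bO^\bI$ and $\epsilon=g^{-1}(f(\gamma))$; the first projection is then a continuous bijection onto $\vP(K;n)_\bO^\bI$ whose inverse $\gamma\mapsto\bigl(\gamma,g^{-1}(f(\gamma))\bigr)$ is continuous because $g^{-1}$ is defined and continuous on the clopen set $\vP(Z;n)_\bO^\bI$ and $f$ is continuous. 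Composing with the homeomorphism of the previous paragraph gives the lemma.

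The step I expect to need the most care is the very first reduction: Proposition \ref{p:Pullbacks} is only available for finite $K$, so the statement should be read with $K$ finite, or one should add a short remark reducing the general case to finite sub--$\square$--sets (every d-path $[0,n]\to|K|$ has compact image, hence lies in some finite sub--$\square$--set, and the colimit topologies are compatible). Everything else is bookkeeping with the length decomposition. As a finiteness-free alternative — which I would only fall back on if the reduction is felt to be unsatisfactory — one can argue directly: using canonical presentations one checks that $|K\times\tilde Z_n|\to|K|\times|\tilde Z_n|$ is an injection, that the composite $K\times\tilde Z_n\to\tilde Z_n\to Z$ coincides with $K\times\tilde Z_n\to K\to Z$, and hence that a length-$n$ d-path on $K$ has a unique lift, its $\tilde Z_n$--coordinate being forced by the unique lift in $\vP(\tilde Z_n)_\bO^\bI\cong\vP(Z;n)_\bO^\bI$ of its image in $\vP(Z)_\bO^\bI$.
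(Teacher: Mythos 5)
Your proposal is correct and follows essentially the same route as the paper: both identify $K\times\tilde Z_n$ as a pullback over the final object $Z$, apply Propositions \ref{p:Pullbacks} and \ref{p:Pullbacks2} to get a pullback of path spaces, and then use that $\vP(\tilde Z_n)_\bO^\bI\to\vP(Z)_\bO^\bI$ is the inclusion of the clopen length-$n$ summand $\vP(Z;n)_\bO^\bI$ (via Proposition \ref{p:LenDecTilde}), so the pullback is exactly $\vP(K;n)_\bO^\bI$. Your remark about the finiteness hypothesis in Proposition \ref{p:Pullbacks} is a fair observation that the paper passes over silently, but it does not change the argument.
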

\begin{proof}
	From Propositions \ref{p:Pullbacks} and \ref{p:Pullbacks2} follows that 
\begin{equation}
		\begin{diagram}
			\node{\vP(K\times \tilde{Z}_n)_\bO^\bI}
				\arrow{s}
				\arrow{e}
			\node{\vP(\tilde{Z}_n)_\bO^\bI}
				\arrow{s}
		\\
			\node{\vP(K\times Z)_\bO^\bI}
				\arrow{e}
			\node{\vP(Z)_\bO^\bI}
		\end{diagram}
\end{equation}
	is a pullback diagram. 
	Furthermore, the right-hand vertical map is the inclusion
	\[
		\vP(\tilde{Z}_n)_\bO^\bI\xrightarrow{\cong} \vP(Z;n)_\bO^\bI \subseteq \vP(Z)_\bO^\bI.
	\]
	Since the length of paths is preserved by $\square$--maps, we have
	\[
		\vP(K\times \tilde{Z}_n)_\bO^\bI\simeq \vP(K\times Z;n)_\bO^\bI\cong \vP(K;n)_\bO^\bI.\qedhere
	\]
\end{proof}

\begin{prp}\label{p:PKisCW}
	If $K$ is a bi-pointed $\square$--set, then $\vP(K)_\bO^\bI$ has the homotopy type of a CW-complex.
\end{prp}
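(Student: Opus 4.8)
The plan is to imitate, for $\tilde K_n$, the argument used for $\tilde Z_n$ in Proposition \ref{p:ZHtpCW}: exhibit $\vP(\tilde K_n)_\bO^\bI$ as the base of a $\Sigma_A$--principal bundle whose total space is the execution space of a \emph{finite non-self-linked} $\square$--set, and then combine Raussen's theorem with the equivariant Milnor theorem. First I would reduce to a single length component: by Proposition \ref{p:LenDecTilde} we have $\vP(K)_\bO^\bI\cong\coprod_{n\geq0}\vP(\tilde K_n)_\bO^\bI$, a coproduct of spaces having the homotopy type of a CW-complex again has the homotopy type of a CW-complex, and each $\tilde K_n$ is a finite bi-pointed $\square$--set (its cubes are the accessible $(c,h)$ with $0\leq h\leq n-\dim c$), so it suffices to treat $\vP(\tilde K_n)_\bO^\bI$.

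Fix a set $A$ with $n$ elements, let $q\colon\tilde K_n\to\tilde Z_n$ be the bi-pointed $\square$--map induced by the unique map $K\to Z$ (explicitly $q(c)=z^{\dim c}_{\alt(c)}$), and form the pullback $W:=\tilde K_n\times_{\tilde Z_n}Y^A$ in $\square\Set$ along $q$ and $p_A\colon Y^A\to\tilde Z_n$. Then $W$ is finite and carries a $\Sigma_A$--action by bi-pointed $\square$--automorphisms via the $\Sigma_A$--action on the $Y^A$--factor (which fixes $p_A$, Proposition \ref{p:OrbitMapIsIso}), and $W$ is non-self-linked: the projection $\mathrm{pr}_2\colon W\to Y^A$ is a $\square$--map, so for every $\square$--map $f\colon\square^m\to W$ the composite $\mathrm{pr}_2\circ f$ is injective ($Y^A$ being non-self-linked, Proposition \ref{p:YANonSelfLinked}), whence $f$ is injective. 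Moreover, by finiteness and Propositions \ref{p:Pullbacks} and \ref{p:Pullbacks2},
\[
	\vP(W)_\bO^\bI\cong\vP(\tilde K_n)_\bO^\bI\times_{\vP(\tilde Z_n)_\bO^\bI}\vP(Y^A)_\bO^\bI ,
\]
which is the pullback of the $\Sigma_A$--principal bundle $\vP(Y^A)_\bO^\bI\to\vP(\tilde Z_n)_\bO^\bI$ of Proposition \ref{p:MainCov}; hence $\vP(W)_\bO^\bI\to\vP(\tilde K_n)_\bO^\bI$ is itself a $\Sigma_A$--principal bundle, so $\vP(\tilde K_n)_\bO^\bI\cong\vP(W)_\bO^\bI/\Sigma_A$ and the $\Sigma_A$--action on $\vP(W)_\bO^\bI$ is free.

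To conclude I would argue exactly as in Proposition \ref{p:ZHtpCW}: since $W$ is non-self-linked, Raussen's automorphism-equivariant construction \cite{Raussen-Trace} makes $\vP(W)_\bO^\bI$ $\Sigma_A$--equilocally convex, so by the equivariant Milnor theorem \cite[Theorem 4.9]{Waner-Milnor} it has the $\Sigma_A$--homotopy type of a (necessarily free) $\Sigma_A$--CW-complex; the quotient of a free $\Sigma_A$--CW-complex by the finite group $\Sigma_A$ is a CW-complex, and a $\Sigma_A$--homotopy equivalence of free $\Sigma_A$--spaces descends to the orbit spaces, so $\vP(\tilde K_n)_\bO^\bI\cong\vP(W)_\bO^\bI/\Sigma_A$ has the homotopy type of a CW-complex, which finishes the proof. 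I expect the only delicate points to be formal ones --- that Raussen's construction is genuinely $\Sigma_A$--equivariant (the same observation already exploited for $Y^A$), that the pullback of a principal bundle along an arbitrary map is again a principal bundle, and that orbit spaces of free finite-group actions preserve CW homotopy type --- while the one piece of genuinely geometric input, the non-self-linkedness of $W$, is immediate from the projection $W\to Y^A$.
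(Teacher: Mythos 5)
Your proposal is correct and takes essentially the same route as the paper: the paper forms the product $K\times Y^A$ over $K\times\tilde{Z}_n$ (using Lemma \ref{l:ZnProd} and the decomposition (\ref{e:LengthDecomp})), pulls back the $\Sigma_A$--principal bundle of Proposition \ref{p:MainCov}, notes that $K\times Y^A$ is non-self-linked, and concludes via Raussen's construction, the equivariant Milnor theorem and passage to the orbit space --- your $W=\tilde{K}_n\times_{\tilde{Z}_n}Y^A$ together with Proposition \ref{p:LenDecTilde} is the same construction in a slightly different guise. Note only that both your argument and the paper's implicitly require $K$ to be finite (as in Proposition \ref{p:Pullbacks} and the surrounding discussion), so this is not a defect of your proof relative to the paper's.
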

\begin{proof}
Let $A$ be a set having $n$ elements. The diagram
\begin{equation}
		\begin{diagram}
			\node{\vP(K\times Y^A)_\bO^\bI}
				\arrow{s}
				\arrow{e}
			\node{\vP(Y^A)_\bO^\bI}
				\arrow{s,r}{\pi}
		\\
			\node{\vP(K\times \tilde{Z}_n)_\bO^\bI}
				\arrow{e}
			\node{\vP(\tilde{Z}_n)_\bO^\bI}
		\end{diagram}
\end{equation}
is a pullback diagram. 
The right-hand column is a $\Sigma_A$--principal bundle (Proposition \ref{p:MainCov}),
so the left-hand column is also a $\Sigma_A$--principal bundle.
Since $A$ is non-self-linked (Proposition \ref{p:YANonSelfLinked}),
$K\times Y^A$ is non-self-linked also.
Therefore, $\vP(K\times Y^A)_\bO^\bI$ has the homotopy type of a CW-complex \cite{Raussen-Trace}
and even the $\Sigma_A$--homotopy type of a $\Sigma_A$--CW-complex (see Proof of \ref{p:ZHtpCW}).
Then $\vP(K\times \tilde{Z}_n)_\bO^\bI=\vP(K\times Y^A)_\bO^\bI/\Sigma_A$ also has the homotopy type of a CW-complex.
Since $\vP(K\times\tilde{Z}_n)_\bO^\bI\simeq \vP(K;n)_\bO^\bI$, 
the conclusion follows by applying the presentation (\ref{e:LengthDecomp}).
\end{proof}

As an immediate consequence of Proposition \ref{p:PKisCW} and Theorem \ref{thm:CubeChainModel}, we obtain:
\begin{cor}\label{c:ChHE}
	For every bi-pointed $\square$--set $K$, the spaces $\vP(K)_\bO^\bI$ and $\georel{\Ch(K)}$ are naturally homotopy equivalent.
\end{cor}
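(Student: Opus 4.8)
The plan is to deduce the corollary directly from Theorem \ref{thm:CubeChainModel} together with Proposition \ref{p:PKisCW}, using the Whitehead theorem. Theorem \ref{thm:CubeChainModel} already supplies a natural \emph{weak} homotopy equivalence between the functors $\georel{\Ch(-)}$ and $\vP(-)_\bO^\bI$ on $\square\Set_*^*$; the only thing missing is the upgrade from ``weak'' to ``genuine'', and this is exactly what the CW-homotopy-type hypothesis provides.

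Concretely, I would proceed as follows. First, $\georel{\Ch(K)}$ is the geometric realization of the nerve of a small category, hence a CW-complex, so in particular it has the homotopy type of a CW-complex. Second, by Proposition \ref{p:PKisCW}, the space $\vP(K)_\bO^\bI$ also has the homotopy type of a CW-complex. Third, since the natural transformation furnished by Theorem \ref{thm:CubeChainModel} is a weak homotopy equivalence at each object $K$, and both its source and target at $K$ have the homotopy type of a CW-complex, the Whitehead theorem shows that each component is a homotopy equivalence. Hence the transformation is already a natural homotopy equivalence, which is the assertion.

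The only subtlety — hardly an obstacle — concerns the word ``natural''. The equivalence of Theorem \ref{thm:CubeChainModel} is realized in \cite{Z-Cub2} by a zigzag of natural transformations passing through a homotopy-colimit functor (cf.\ the proof of Proposition \ref{p:ChPYEqHE}). There are two clean ways to handle this: either work in $\hTop$, in which such a zigzag of weak equivalences \emph{is}, by definition, a single natural morphism, and in which (by Whitehead) a componentwise weak equivalence between functors valued in spaces of CW homotopy type is a componentwise isomorphism, hence a natural homotopy equivalence; or one observes that the intermediate homotopy colimit is assembled over a small category from the contractible spaces $\vN(\square^{{\vee}\bn})_\bO^\bI$ \cite{Z-Cub}, so it too has the homotopy type of a CW-complex and Whitehead applies at each stage of the zigzag. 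I expect the former to be the shortest write-up; in either case the statement is genuinely immediate once Proposition \ref{p:PKisCW} is in hand, which is why it is recorded here as a corollary.
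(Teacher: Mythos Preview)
Your proposal is correct and matches the paper's approach exactly: the paper states the corollary as an ``immediate consequence of Proposition \ref{p:PKisCW} and Theorem \ref{thm:CubeChainModel}'', and your argument simply unpacks this by noting that both spaces have CW homotopy type (one by Proposition \ref{p:PKisCW}, the other as a nerve) and applying Whitehead to the natural weak equivalence. Your discussion of the naturality subtlety via the zigzag in $\hTop$ is more explicit than the paper's one-line citation but is precisely the intended reasoning.
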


The next statement shows that, when calculating homotopy types of execution spaces, one can restrict to non-self-linked $\square$--sets,
at least up to finite covering.
For a bi-pointed d-space $X$ and $\alpha\in \vP(X)_\bO^\bI$ let $\vP(X,\alpha)$ denote the connected component of $\vP(X)_\bO^\bI$ containing $\alpha$.

\begin{prp}
	Let $K$ be a bi-pointed $\square$--set and let $\alpha\in \vP(K;n)_\bO^\bI$.
	\begin{enumerate}[\normalfont(a)]
	\item
		There exists a non-self-linked bi-pointed $\square$--set $K'$, a $\square$--map $f:K'\to K$ and $\alpha'\in \vP(K',n)_\bO^\bI$
		such that $|f|\circ \alpha'=\alpha$
		and the map $\vP(f):\vP(K',\alpha')_\bO^\bI\to \vP(K,\alpha)_\bO^\bI$
		is a finite covering.
	\item
		If $\PiInvS{K}{n}{\alpha}$ is trivial, then we can require that $\vP(f)$ is a homeomorphism.
	\end{enumerate}
\end{prp}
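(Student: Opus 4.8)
The plan is to take $A$ to be a set with $n$ elements, set $K':=K\times Y^A$, and let $f\colon K'\to K$ be the projection onto the first factor. Since $Y^A$ is non-self-linked (Proposition~\ref{p:YANonSelfLinked}), so is $K\times Y^A$: a $\square$--map $\square^m\to K\times Y^A$ is a pair of $\square$--maps whose second component is injective, as observed in the proof of Proposition~\ref{p:PKisCW}. If $K$ is finite then so is $K'$. That proof also exhibits, via a pullback square over $\vP(\tilde Z_n)_\bO^\bI$, the map $\vP(\id_K\times p_A)\colon\vP(K')_\bO^\bI\to\vP(K\times\tilde Z_n)_\bO^\bI$ as a $\Sigma_A$--principal bundle; composing with the homeomorphism $\vP(K\times\tilde Z_n)_\bO^\bI\cong\vP(K;n)_\bO^\bI$ of Lemma~\ref{l:ZnProd} gives a $\Sigma_A$--principal bundle $\vP(K')_\bO^\bI\to\vP(K;n)_\bO^\bI$, which is exactly $\vP(f)$ corestricted to its image (both equal $\vP(\id_K\times p_A)$ followed by the map $\vP(K\times\tilde Z_n)_\bO^\bI\to\vP(K)_\bO^\bI$ induced by projection to $K$, since $f$ factors as $K\times Y^A\xrightarrow{\id_K\times p_A}K\times\tilde Z_n\to K$). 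Finally, every d-path on $K'$ has length $n$: $\square$--maps preserve length, and the projection to $Y^A$ carries it to a d-path of length $n=|A|$. Hence $\vP(K')_\bO^\bI=\vP(K';n)_\bO^\bI$.

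For (a): the bundle $\vP(f)\colon\vP(K')_\bO^\bI\to\vP(K;n)_\bO^\bI$ is surjective, so choose $\alpha'$ in the fibre over $\alpha$; then $|f|\circ\alpha'=\alpha$ and $\alpha'\in\vP(K';n)_\bO^\bI$. Since $\vP(f)$ is a covering (with at most $n!$ sheets) and $\vP(K,\alpha)_\bO^\bI$ is a component of the base, restricting $\vP(f)$ to the component $\vP(K',\alpha')_\bO^\bI$ of its total space yields a finite covering onto $\vP(K,\alpha)_\bO^\bI$, which is (a).

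For (b): the number of sheets of this covering equals the order of the image of the monodromy homomorphism $\rho\colon\pi_1(\vP(K,\alpha)_\bO^\bI,\alpha)\to\Sigma_A$ of the $\Sigma_A$--principal bundle obtained by restricting $\vP(f)$ over $\vP(K,\alpha)_\bO^\bI$ (the component of $\alpha'$ meets the fibre over $\alpha$ in the $\rho$--orbit of $\alpha'$, which has $|\im\rho|$ elements). It remains to identify $\rho$, up to conjugacy, with $\PiInvS{K}{n}{\alpha}$. By construction $\vP(f)$ is the pullback of $\pi\colon\vP(Y^A)_\bO^\bI\to\vP(\tilde Z_n)_\bO^\bI$ along the map $\vP(K;n)_\bO^\bI\cong\vP(K\times\tilde Z_n)_\bO^\bI\to\vP(\tilde Z_n)_\bO^\bI$, which after composing with the equivalence $\vP(\tilde Z_n)_\bO^\bI\simeq\UConf(n,\R^2)$ of Theorem~\ref{t:Main} is the map $R_n(K)$, whose effect on $\pi_1$ is $\PiInv{K}{n}{\alpha}$. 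On the other hand, diagram~(\ref{e:LargeDiag}) identifies $\pi$, as a $\Sigma_A$--principal bundle, with $\OConf(A,\R^2)\to\UConf(n,\R^2)$, which is classified by the canonical surjection $B_n\cong\pi_1(\UConf(n,\R^2))\to\Sigma_n\cong\Sigma_A$. Combining, $\rho$ is the composite $\pi_1(\vP(K,\alpha)_\bO^\bI,\alpha)\xrightarrow{\PiInv{K}{n}{\alpha}}B_n\to\Sigma_n=\Sigma_A$, i.e.\ $\PiInvS{K}{n}{\alpha}$. In particular, if $\PiInvS{K}{n}{\alpha}$ is trivial then $\vP(f)\colon\vP(K',\alpha')_\bO^\bI\to\vP(K,\alpha)_\bO^\bI$ is a one-sheeted covering, hence a homeomorphism, which is (b) (with the same $K'$).

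The main obstacle is this last identification in (b): carefully assembling Lemma~\ref{l:ZnProd}, diagram~(\ref{e:LargeDiag}) and Theorem~\ref{t:Main} into a genuinely $\Sigma_A$--equivariant identification of the pulled-back bundle $\vP(f)$ with $R_n(K)^{*}\bigl(\OConf(A,\R^2)\to\UConf(n,\R^2)\bigr)$, so that its monodromy really is $\PiInvS{K}{n}{\alpha}$ and not merely something with the same underlying covering, while keeping track of the various homeomorphisms (the two projections out of $K\times\tilde Z_n$, the length covering $\tilde K_n$, etc.). One should also confirm that "$\Sigma_A$--principal bundle" is used throughout in the sense of a locally trivial bundle with discrete fibre $\Sigma_A$ (as it is in Propositions~\ref{p:MainCov} and~\ref{p:PKisCW}), so that restriction to a component of the total space is a covering. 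Part (a) needs none of the monodromy computation, only the elementary fact that a finite principal bundle restricts to a finite covering on each component of its total space.
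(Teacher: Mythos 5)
Your proposal is correct and follows essentially the same route as the paper: take $K'=K\times Y^A$ with $f$ the projection, identify $\vP(f)$ (corestricted to $\vP(K;n)_\bO^\bI$ via Lemma~\ref{l:ZnProd}) as the $\Sigma_A$--principal bundle pulled back from $\pi:\vP(Y^A)_\bO^\bI\to\vP(\tilde Z_n)_\bO^\bI$, restrict to components for (a), and use triviality of the monodromy for (b). The only difference is one of detail: the paper simply asserts that triviality of $\PiInvS{K}{n}{\alpha}$ makes the restricted bundle a product, whereas you spell out the identification of the monodromy with $\PiInvS{K}{n}{\alpha}$ via $R_n(K)$ and diagram~(\ref{e:LargeDiag}); that elaboration is consistent with the paper's intent and correct.
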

\begin{proof}
	Fix a set $A$ having $n$ elements.
	Let $f:K'=K\times Y^A\to K$ be the projection on the first factor.
	Since $Y^A$ is non-self-linked, so is $K'$.
	The map induced by $f$ is the composition
	\[
		\vP(K')_\bO^\bI=\vP(K\times Y^A)_\bO^\bI \to \vP(K\times \tilde{Z}_n)_\bO^\bI \xrightarrow{\cong}  \vP(K\times Z;n)_\bO^\bI \xrightarrow{\cong}  \vP(K;n)_\bO^\bI.
	\]
	 Hence, it is a principal $\Sigma_A$--bundle and (a) is proven.
	 
	 If $\PiInvS{K}{n}{\alpha}$ is the trivial homomorphism,
	 then $\vP(K\times Y^A)_\bO^\bI \to \vP(K\times \tilde{Z}_n)_\bO^\bI$ is a product bundle,
	 which shows (b).
\end{proof}

\subsection*{James construction}
Let $(X,x_0)$ be a pointed topological space and let $J(X)=J(X,x_0)$ be the free topological monoid spanned by $(X,x_0)$.
Elements of $J(X)$ are finite sequences of elements of $X$,
and two sequences are equivalent if one can be obtained from the other by adding or removing $x_0$.
Namely,
\begin{equation}
	J(X)=\colim\left(J_0(X)\subseteq J_1(X)\subseteq J_2(X)\subseteq \dotsm \right),
\end{equation}
where $J_n(X)=X^n/(x_1,\dotsc,x_i,x_0,x_{i+1},\dotsc,x_n)\sim(x_1,\dotsc,x_0,x_i,x_{i+1},\dotsc,x_n)$, and the inclusions add $x_0$ at the last position.
By the classical result of James \cite{James}, the natural map $J(X)\to \Omega\Sigma X$ is a weak homotopy equivalence.

A sequence of maps 
\[
	f_n:I^n\ni (x_1,\dotsc x_n)\mapsto (e^{2\pi i x_1},\dotsc,e^{2\pi i x_n}) \in  J_n(S^1,1)
\]
satisfies relations $f_n\circ \delta^\varepsilon_i = f_{n-1}$ for all $n$, $\varepsilon$ and $i$ (see (\ref{e:DeltaMaps})).
Thus, these maps glue to the maps $f^{(n)}:|Z^{(n)}|\to J_n(S^1)$,
which are bijective.
Here $Z^{(n)}$ stands for the $n$--skeleton of $Z$.
Indeed, every point of $|Z^{(n)}|$ (respectively $J_n(S^1)$) has a unique presentation as a sequence of at most $n$ elements of $(0,1)$ (resp. $S^1\setminus\{1\}$).
Since the spaces $|Z^{(n)}|$ are compact, $f^{(n)}$'s are homeomorphisms. 
By passing to the colimit we obtain the following.

\begin{prp}
	The map $f=\colim(f^{(n)}): |Z| \to J(S^1)$ is a homeomorphism.
\end{prp}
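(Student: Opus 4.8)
The plan is to show that $f=\colim(f^{(n)})$ is a homeomorphism by combining the already-established facts that each $f^{(n)}:|Z^{(n)}|\to J_n(S^1)$ is a homeomorphism with a careful analysis of how the colimit topologies on both sides are built, and how $f$ respects the filtration. First I would record that $f$ is well-defined and continuous: the maps $f^{(n)}$ are compatible with the inclusions $|Z^{(n)}|\hookrightarrow|Z^{(n+1)}|$ and $J_n(S^1)\hookrightarrow J_{n+1}(S^1)$ (this compatibility is exactly the statement that the $f_n$ glue, together with the fact that both inclusions ``add a basepoint coordinate at the end''), so $f$ is the colimit of continuous maps between the terms of two expanding sequences, hence continuous. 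Since each $f^{(n)}$ is a bijection and the two filtrations are exhaustive, $f$ is a bijection of sets.

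Next I would argue that $f$ is a homeomorphism by showing it is a closed map, using that both $|Z|$ and $J(S^1)$ carry the colimit (weak) topology with respect to their filtrations by the compact subspaces $|Z^{(n)}|$ and $J_n(S^1)$ respectively. A set $C\subseteq J(S^1)$ is closed iff $C\cap J_n(S^1)$ is closed in $J_n(S^1)$ for every $n$; similarly for $|Z|$. Given a closed set $D\subseteq|Z|$, its intersection with each $|Z^{(n)}|$ is closed, hence compact (as $|Z^{(n)}|$ is compact), so $f^{(n)}(D\cap|Z^{(n)}|)=f(D)\cap J_n(S^1)$ is compact, hence closed in $J_n(S^1)$; therefore $f(D)$ is closed. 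The same reasoning applied to $f^{-1}$ (which is $\colim((f^{(n)})^{-1})$, continuous by the same compatibility argument) finishes the proof; alternatively one simply notes $f$ is a continuous closed bijection, hence a homeomorphism.

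The one point requiring slight care — and the main obstacle, such as it is — is verifying that $|Z|$ really does carry the colimit topology of the $|Z^{(n)}|$ and that $J(S^1)$ carries the colimit topology of the $J_n(S^1)$, so that the ``check on each filtration stage'' argument is legitimate. For $J(S^1)$ this is the definition of $J(X)$ as a colimit recalled in the preceding paragraph of the paper. For $|Z|$ it follows from the construction of geometric realization: $|Z|=\coprod_n Z[n]\times\vI^n/\!\sim$ has the quotient topology, each skeleton $|Z^{(n)}|$ is the image of the finite union $\coprod_{m\le n}Z[m]\times\vI^m$ and is compact, and $|Z|=\bigcup_n|Z^{(n)}|$ with the weak topology — a standard property of CW-type realizations. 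Once this is in hand, everything else is the routine ``colimit of homeomorphisms between compatibly filtered spaces, each filtration stage compact, is a homeomorphism'' argument, so I would state it concisely rather than belabor it.
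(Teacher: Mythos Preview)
Your proposal is correct and follows the same approach as the paper: the paper establishes in the paragraph preceding the proposition that each $f^{(n)}$ is a homeomorphism (continuous bijection from a compact space), and then simply writes ``by passing to the colimit we obtain the following,'' stating the proposition without further proof. Your write-up is an explicit unpacking of that phrase---verifying compatibility, bijectivity, the colimit topologies on both sides, and the closed-map argument---so it is the same proof, just with the details the paper leaves implicit spelled out.
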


As a consequence, the spaces $|Z|$ and $\Omega S^2$ are homotopy equivalent.
The initial and the final vertex of $Z$ are the same.
Thus, we have maps
\[
	\vP(Z)_\bO^\bI\subseteq  \Omega|Z| \cong \Omega J(S^1) \xrightarrow{\simeq} \Omega^2(S^2)=\map_*(S^2,S^2)
\]
and 
\[
	\UConf(n,\R^2) \simeq \vP(Z;n)_\bO^\bI \subseteq \vP(Z)_\bO^\bI \to \Omega^2 S^2.
\]
This suggests that unordered configurations spaces on $\R^2$ can be regarded as ``directed" self-maps of $S^2$,
although an interpretation of this fact is not clear for the authors.

\subsection*{Presentation of configuration spaces as nerves of certain categories}

\def\RDoSet{\mathbf{RDoSet}}

Let $\cC$ be a finite category and let $G$ be a finite group.
Assume that $G$ acts freely on $\cC$ from the right, ie. $cg\neq c$ for every $1\neq g\in G$ and every object $c\in \cC$.
\emph{The quotient category} $\cC/G$ is the category with objects $\Ob(\cC/G)=\Ob(\cC)/G$ and morphisms
\[
	(\cC/G)(cG,c'G)
	=
	\left( \coprod_{g\in G} \coprod_{g'\in G}\cC(cg, c'g')\right)/G
	\simeq
	\coprod_{g\in G} \cC(c,c'g)
	\simeq
	\coprod_{g\in G} \cC(cg,c').	
\]
The composition of morphisms $\alpha G\in (\cC/G)(cG,c'G)$ and $\beta G\in (\cC/G)(c'G,c''G)$
is $(\beta'\alpha')G$,
where $\alpha'\in \coprod_{g\in G} \cC(cg,c')$  (resp. $\beta'\in\coprod_{g\in G} \cC(c',c''g)$) is the unique morphism such that $\alpha'G=\alpha G$ (resp. $\beta'G=\beta G$). The projection $\cC\to\cC/G$ is a functor.

\begin{rem}
	The quotient category $\cC/G$ can be defined for arbitrary, non necessarily free action of $G$, as a colimit of the suitable diagram in the category of small categories. But, in general, there is no guarantee that morphisms in the quotient category $\cC/G$ are exactly orbits of morphisms in $\cC$.
\end{rem}

\begin{lem}\label{l:GNerve}
	If a group $G$ acts freely on a category $\cC$, then the map $|\cC|/G \to |\cC/G|$ induced by the projection $\cC\to\cC/G$ is a homeomorphism.
\end{lem}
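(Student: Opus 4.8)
The plan is to build an explicit continuous inverse to the natural map $|\cC|/G\to|\cC/G|$, using the simplicial (bar-construction) description of the nerve and the fact that the $G$-action is free. Recall that $|\cC|$ is glued from simplices $\Delta^r\times\cC(c_0,c_1)\times\cdots\times\cC(c_{r-1},c_r)$ indexed by composable strings of morphisms, and likewise $|\cC/G|$ is glued from simplices indexed by composable strings in $\cC/G$. The projection functor $\cC\to\cC/G$ induces a surjective continuous map $q\colon|\cC|\to|\cC/G|$, which factors through the orbit space because $q$ is obviously $G$-invariant, giving the map $\bar q\colon|\cC|/G\to|\cC/G|$ whose inverse we must produce.

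First I would check that $\bar q$ is a bijection. Surjectivity is clear since $q$ is surjective. For injectivity, suppose two points of $|\cC|$ map to the same point of $|\cC/G|$; after passing to nondegenerate representatives we get two composable strings $(\alpha_1,\dots,\alpha_r)$ and $(\beta_1,\dots,\beta_r)$ in $\cC$ with the same barycentric coordinates in $\Delta^r$ and whose images in $\cC/G$ coincide term by term. The key combinatorial step is: because the first objects $c_0$ and $d_0$ lie in the same $G$-orbit, there is a unique $g_0\in G$ with $d_0=c_0g_0$; then by the description of morphisms in $\cC/G$ as orbits (here I use the hypothesis that the action is free, which is exactly what guarantees $(\cC/G)(cG,c'G)\cong\coprod_{g}\cC(cg,c')$, stated in the excerpt just before the lemma), the equality $\overline{\alpha_1}=\overline{\beta_1}$ forces $\beta_1=g_0^{-1}\alpha_1 g_1$ for a unique $g_1\in G$, and inductively $\beta_i = g_{i-1}^{-1}\alpha_i g_i$ for unique $g_i\in G$. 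Hence $(\beta_1,\dots,\beta_r)=(\alpha_1,\dots,\alpha_r)g_0$ up to relabelling, so the two points of $|\cC|$ lie in the same $G$-orbit; thus $\bar q$ is injective.

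Next I would observe that $\bar q$ is continuous (it is induced on the quotient by the continuous map $q$) and closed, hence a homeomorphism. For closedness, the cleanest route is finiteness: $\cC$ is a finite category, so $|\cC|$ is a finite CW-complex, hence compact, hence so is the quotient $|\cC|/G$; and $|\cC/G|$ is Hausdorff (again a finite CW-complex). A continuous bijection from a compact space to a Hausdorff space is a homeomorphism, which finishes the proof. If one prefers to avoid compactness, one can instead note that $q$ identifies the characteristic maps of the cells of $|\cC|$ with those of $|\cC/G|$ compatibly, so $\bar q$ is a cellular bijection that matches cells with cells and is therefore a homeomorphism; but the compactness argument is shorter.

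The main obstacle is the injectivity step, specifically making the inductive construction of the elements $g_i\in G$ precise and checking that it is compatible with the face and degeneracy identifications defining the nerve (so that the ``relabelling'' above really is the action of a single $g_0$ on the whole simplex, not a different group element on each face). This is where the freeness hypothesis is essential and where the isomorphism $(\cC/G)(cG,c'G)\simeq\coprod_{g\in G}\cC(cg,c')$ recorded in the excerpt does the real work; everything else is the standard compact-to-Hausdorff formality.
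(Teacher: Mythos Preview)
Your argument is correct and its combinatorial core---lifting a composable string in $\cC/G$ to $\cC$ uniquely up to the $G$--action, using freeness to force all the intermediate group elements to coincide---is exactly the content of the paper's one-line proof, which simply asserts that $N(\cC)/G\to N(\cC/G)$ is an isomorphism of simplicial sets. The only difference is the finishing move: the paper works at the simplicial level and then implicitly uses that geometric realization, being a left adjoint, commutes with the quotient by $G$, whereas you pass to spaces first and invoke compactness of $|\cC|$ (legitimate here since the ambient hypotheses make $\cC$ finite) to upgrade the continuous bijection to a homeomorphism. Your alternative ``cellular bijection'' remark is essentially the paper's route; either way the substance is the same.

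One small notational wrinkle: writing $\beta_i=g_{i-1}^{-1}\alpha_i g_i$ suggests a two-sided action that is not quite what is happening. Cleaner is to say that $\overline{\alpha_i}=\overline{\beta_i}$ gives a unique $h_i\in G$ with $\beta_i=\alpha_i h_i$, and then freeness on objects (comparing the two expressions for $d_i$) forces $h_i=h_{i+1}$, so a single $h$ works for the whole chain. This is what you conclude anyway, but phrasing it this way makes the role of freeness more transparent and avoids the appearance of conjugation.
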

\begin{proof}
	It is easy to check that $\cC\to\cC/G$ induces an isomorphism of simplicial sets $N(\cC)/G\to N(\cC/G)$.
	The conclusion follows.
\end{proof}

From the main theorem and Lemma \ref{l:GNerve} follows that there is a homotopy equivalence
\[
	\UConf(n;\R^2)=\OConf(A,\R^2)/\Sigma_A \simeq |(R(A),\sqsupseteq)|/\Sigma_A \overset{\simeq}{\longrightarrow} |(R(A),\sqsupseteq)/\Sigma_A|,
\]
which is a presentation of $\UConf(n;\R^2)$ as a nerve of a category.
Following \cite{Paliga}, we present below an explicit description of the category $(R(A),\sqsupseteq)/\Sigma_A$.
We will use notation introduced in Section \ref{s:DOS} without further notice.

\begin{df}
Let $\cE_n$ be the category whose objects are subsets of $\{1,\dotsc,n-1\}$.
For
\[
		B=\{b_1<\dotsm<b_{r-1}\}\supseteq B'=\{b'_1<\dotsm<b'_{r'-1}\}\subseteq \{1,\dotsc,n-1\},
\]
morphisms from $B$ to $B'$
are permutations $\varphi$ of $\{1,\dotsc,n\}$
satisfyng the conditions:
\begin{itemize}
\item[(x)] 
	$\varphi(\{b'_{p-1}+1,\dotsc,b'_{p}\})=\{b'_{p-1}+1,\dotsc,b'_{p}\}$ for $p\in\{1,\dotsc,r'\}$ (we assume $b'_0=0$, $b'_{r'}=n$).
\item[(y)]
	$\varphi(b_{p-1}+1)<\varphi(b_{p-1}+2)<\dotsm<\varphi(b_{p})$ for $p\in \{1,\dotsc,r\}$ (again, $b_0=0$, $b_{r}=n$).
\end{itemize}
For $B\not\supseteq B'$ we assume $\cE_n(B,B')=\emptyset$.
The composition in $\cE_n$ is the composition of permutations.
\end{df}

Note that (x) is equivalent to the condition
\begin{itemize}
\item[(x')]
	$\forall i,j\in\{1,\dotsc,n\}:\; (\exists b'\in B':\; i\leq b'<j) \implies \varphi(i)<\varphi(j)$,
\end{itemize}
and that $\varphi$ satisfies (x) if and only if $\varphi^{-1}$ satisfies (x).

We will construct a functor $F:(R(A),\sqsubseteq)\to \cE_n$.
For a regular double order ${\dle}\in R(A)$
let
\[
	F({\dle})=\{ b_1,\dots,b_{r-1}\}\subseteq \{1,\dotsc,n-1\}, 
\]
where $b_p=|\{a\in A\st h({\xle})(a)\leq p\}|$, $r=l({\xle})$.

\begin{df}
	Let ${\dle}\in R(A)$, $F({\dle})=\{b_1<\dotsm<b_{r-1}\}$.
	\emph{The monotonic numbering} for ${\dle}\in R(A)$ is a presentation $A=\{a_1,\dots,a_n\}$ such that
\[
	a_i\xle a_j \iff \exists p:\; i\leq b_p<j,\qquad a_i\yle a_j \iff \exists p:\; b_{p-1}<i<j\leq b_p.
\]
	Such a numbering exists and is unique.
\end{df}

For ${\dle} \sqsupseteq {\dle'}\in R(A)$ let $A=\{a_1,\dotsc,a_n\}$ and $A=\{a'_1,\dotsc,a'_n\}$ be the monotonic numberings for ${\dle}$ and ${\dle'}$, respectively.
We define $F({\dle} \sqsupseteq {\dle'})$ to be the unique permutation $\varphi\in\Sigma_n$ such that $a'_{\varphi(i)}=a_i$ for all $i$.
It is easy to verify that $F$ preserves composition.
It is a functor thanks to the following.

\begin{prp}
	If ${\dle}\sqsupseteq {\dle'}\in R(A)$, then $\varphi:=F({\dle}\sqsupseteq {\dle'})\in \cE_n(F({\dle}),F({\dle'}))$.
\end{prp}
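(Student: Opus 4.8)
The plan is to unwind the definitions of $F$ and of $\cE_n$ and to verify, for the permutation $\varphi = F({\dle}\sqsupseteq{\dle'})$, the two defining conditions of a morphism in $\cE_n$: condition (y) directly, and condition (x) in its equivalent reformulation (x'). Recall that $\varphi$ is the unique element of $\Sigma_n$ with $a'_{\varphi(i)}=a_i$, where $a_1,\dots,a_n$ and $a'_1,\dots,a'_n$ are the monotonic numberings for ${\dle}$ and ${\dle'}$ respectively.

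First I would record how the two semi-linear orders ${\xle}\supseteq{\xle'}$ relate combinatorially. Put $g=h({\xle})$, $g'=h({\xle'})$, $r=l({\xle})$, $r'=l({\xle'})$. The inclusion ${\xle'}\subseteq{\xle}$ --- which holds because ${\dle}\sqsupseteq{\dle'}$ --- forces $g$ to refine $g'$ monotonically, so $g'=\bar\jmath\circ g$ for a unique non-decreasing surjection $\bar\jmath\colon\{1,\dots,r\}\to\{1,\dots,r'\}$. Setting $\pi(q)=\max\bar\jmath^{-1}(q)$ one gets $\bar\jmath(\pi(q)+1)\ge q+1$ for $q<r'$, and from $b_p=|\{a\in A\st g(a)\le p\}|$ (and the analogous formula for $b'_q$) one obtains $b'_q=b_{\pi(q)}$. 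In particular $F({\dle'})\subseteq F({\dle})$, so $\cE_n(F({\dle}),F({\dle'}))$ is defined and the statement makes sense; write $B=F({\dle})$, $B'=F({\dle'})$.

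The single elementary fact used throughout is that, by the defining biconditionals of the monotonic numbering for ${\dle'}$, each of $a'_k\xle' a'_m$ and $a'_k\yle' a'_m$ implies $k<m$; applied with $k=\varphi(i)$, $m=\varphi(j)$ this gives: if $a_i$ is related to $a_j$ by ${\xle'}$ or by ${\yle'}$, then $\varphi(i)<\varphi(j)$. Condition (y) then drops out: for $b_{p-1}<i<j\le b_p$, the elements $a_i,a_j$ lie in one $\xle$-level, so $a_i\yle a_j$ by the monotonic numbering for ${\dle}$, hence $a_i\yle' a_j$ since ${\yle}\subseteq{\yle'}$, hence $\varphi(i)<\varphi(j)$; thus $\varphi$ is increasing on each block $\{b_{p-1}+1,\dots,b_p\}$ of $B$. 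For (x') I would take $i\le b'<j$ with $b'=b'_q=b_{\pi(q)}\in B'$; since $a_1,\dots,a_{b_{\pi(q)}}$ are exactly the elements $a$ with $g(a)\le\pi(q)$, this forces $g(a_i)\le\pi(q)<g(a_j)$, and applying $\bar\jmath$ gives $g'(a_i)\le q<q+1\le g'(a_j)$, i.e.\ $a_i\xle' a_j$, so again $\varphi(i)<\varphi(j)$.

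The only genuinely delicate point is the bookkeeping that passes back and forth between \emph{positions} in a monotonic numbering and \emph{level values} $h({\xle})(a)$, together with the inequality $\bar\jmath(\pi(q)+1)\ge q+1$, which records that the cut $b'_q$ of ${\dle'}$ sits at the \emph{end} of the block of $\xle$-levels that are merged into the $q$-th $\xle'$-level. Once $\bar\jmath$ and the identity $b'_q=b_{\pi(q)}$ are in hand, the verification of (x') and (y) is a line or two each, and there is nothing else of substance.
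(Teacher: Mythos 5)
Your proof is correct and takes essentially the same route as the paper: both arguments verify (y) and the reformulation (x') by unwinding the two monotonic numberings and using ${\xle'}\subseteq{\xle}$ and ${\yle}\subseteq{\yle'}$. The only difference is tactical: the paper checks (x') for $\varphi^{-1}$ (invoking the remark that $\varphi$ satisfies (x) iff $\varphi^{-1}$ does), which avoids your bookkeeping with $\bar\jmath$ and $b'_q=b_{\pi(q)}$, whereas your version has the minor bonus of making the containment $F({\dle})\supseteq F({\dle'})$ explicit, which the paper leaves implicit.
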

\begin{proof}
	We have ${\xle}\supseteq {\xle'}$ and ${\yle}\subseteq{\yle'}$.
	Let $a_i$, $a'_i$, $b_p$, $b'_{p}$ be as above.

	For $i,j\in\{1,\dotsc,n\}$ we have
\[
	\exists p:\; i\leq b'_p < j
	 \iff
	 a'_i\xle' a'_j 
	 \implies
 	 a'_i\xle a'_j 
 	 \iff
 	 a_{\varphi^{-1}(i)}\xle a_{\varphi^{-1}(j)} 
 	 \implies 
 	 \varphi^{-1}(i)<\varphi^{-1}(j) 
\]	
	Thus, $\varphi^{-1}$ satisfies (x') and, therefore, $\varphi$ satisfies (x).
	Also,
	\[
		b_{p-1}<i<j\leq b_p
		\implies 
		a_i\yle a_j
		\implies
		a_i\yle' a_j
		\iff
		a'_{\varphi(i)}\yle' a'_{\varphi(j)}
		\implies
		\varphi(i)<\varphi(j)
	\]	
	and then $\varphi$ satisfies (y).
\end{proof}

\begin{prp}\label{p:FSigmaInv}
	Let $\sigma\in\Sigma_A$. Then
	\begin{enumerate}[\normalfont (a)]
	\item
		$F({\dle}\sigma)=F({\dle})$ for ${\dle}\in\ R(A)$,
	\item
		$F({\dle}\sqsupseteq{\dle'})=F({\dle}\sigma\sqsupseteq{\dle'}\sigma)$ for ${\dle}\sqsupseteq{\dle'}\in R(A)$.
	\end{enumerate}
\end{prp}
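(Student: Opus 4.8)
The plan is to derive both statements from a single structural fact already recorded in the proof of Proposition~\ref{p:DOAction}, namely that $h({\xle}\sigma)=h({\xle})\circ\sigma$ (and hence $l({\xle}\sigma)=l({\xle})$), together with the uniqueness clause in the definition of the monotonic numbering.

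For (a) I would simply observe that, since $\sigma$ is a bijection of $A$, the set $\{a\in A\st h({\xle}\sigma)(a)\leq p\}$ equals $\sigma^{-1}\big(\{a\in A\st h({\xle})(a)\leq p\}\big)$ and therefore has the same cardinality; thus each $b_p$ occurring in the definition of $F({\dle})$ is unaffected by precomposition with $\sigma$, which gives $F({\dle}\sigma)=F({\dle})$.

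For (b) the one step with real content is to show that if $A=\{a_1,\dots,a_n\}$ is the monotonic numbering for ${\dle}$, then $A=\{\sigma^{-1}(a_1),\dots,\sigma^{-1}(a_n)\}$ is the monotonic numbering for ${\dle}\sigma$. To check this, set $\hat a_i:=\sigma^{-1}(a_i)$ and note that $\hat a_i\mathrel{{\xle}\sigma}\hat a_j\iff a_i\xle a_j$ and $\hat a_i\mathrel{{\yle}\sigma}\hat a_j\iff a_i\yle a_j$; since $F({\dle}\sigma)=F({\dle})$ by (a), the defining relations of the monotonic numbering for ${\dle}\sigma$ hold for the $\hat a_i$, so uniqueness identifies them. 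Granting this, if $\{a_i\}$ and $\{a'_i\}$ are the monotonic numberings for ${\dle}$ and ${\dle'}$, then $F({\dle}\sigma\sqsupseteq{\dle'}\sigma)$ is by definition the unique permutation $\psi$ with $\sigma^{-1}(a'_{\psi(i)})=\sigma^{-1}(a_i)$ for every $i$; applying $\sigma$, this reads $a'_{\psi(i)}=a_i$, which is exactly the relation defining $\varphi=F({\dle}\sqsupseteq{\dle'})$, whence $\psi=\varphi$. I would also note in passing that $\sigma$ acts on $(D(A),{\sqsubseteq})$ by order-isomorphisms, so that ${\dle}\sqsupseteq{\dle'}$ forces ${\dle}\sigma\sqsupseteq{\dle'}\sigma$ and the left-hand side of (b) is meaningful.

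I do not expect a genuine obstacle: everything is formal once the two inputs above are available. The only place that needs care is the variance of the $\Sigma_A$-action — the monotonic numbering transforms by $\sigma^{-1}$, not by $\sigma$ — so that the conditions characterizing $\varphi$ and $\psi$ are seen to coincide rather than to differ by a conjugation.
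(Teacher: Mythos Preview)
Your proposal is correct and follows essentially the same route as the paper's own proof: use $h({\xle}\sigma)=h({\xle})\circ\sigma$ to get (a), then observe that the monotonic numbering transforms by $\sigma^{-1}$ to deduce that the defining relation $a'_{\varphi(i)}=a_i$ is preserved, yielding (b). Your write-up merely spells out a few steps (the cardinality argument for $b_p$, the verification of the numbering, and the well-definedness of ${\dle}\sigma\sqsupseteq{\dle'}\sigma$) that the paper leaves implicit.
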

\begin{proof}
	We have $h({\xle}\sigma)=h({\xle})\sigma$, which implies (a).
	Assume that $(a_i)_{i=1}^n$ and $(a'_i)_{i=1}^n$ are monotonic numberings of ${\dle}$ and ${\dle'}$, respectively,
	and that $\varphi=F({\dle}\sqsupseteq{\dle'})$.
	Then $(\sigma^{-1}(a_i)_{i=1}^n)$ and $(\sigma^{-1}(a'_i)_{i=1}^n)$ are monotonic numberings of ${\dle}\sigma$ and ${\dle'}\sigma$, respectively.
	We have $\sigma^{-1}(a_i)=\sigma^{-1}(a'_{\varphi(i)})$
	and, therefore, $F({\dle}\sigma\sqsupseteq{\dle'}\sigma)=\varphi$.
\end{proof}

As a consequence of Proposition \ref{p:FSigmaInv}, $F$ induces the quotient functor
\[
	\bar{F}:(R(A),\sqsupseteq)/\Sigma_A \to \cE_n.
\]

\begin{prp}\label{p:BarFIso}
	The functor $\bar{F}$ is an isomorphism of categories.
\end{prp}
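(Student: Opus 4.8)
The plan is to prove that the functor $\bar F$ is bijective on objects and on every morphism set; since $\bar F$ is by construction the functor induced by $F$ on the quotient (and $\Sigma_A$ acts freely on $R(A)$ by Proposition \ref{l:FreeActionDA}), it is automatically a functor, so bijectivity on objects and morphisms is all that is needed. Throughout I would work with the \emph{monotonic numbering}, which attaches to each ${\dle}\in R(A)$ a canonical enumeration $A=\{a_1,\dots,a_n\}$.

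\emph{Objects.} First I would check that ${\dle}\mapsto F({\dle})$ induces a bijection $R(A)/\Sigma_A\to\{\,B\subseteq\{1,\dots,n-1\}\,\}$. Surjectivity is clear: given $B=\{b_1<\dots<b_{r-1}\}$, the regular double order described by the monotonic-numbering formulas with this $B$ maps to $B$. For injectivity, if $F({\dle})=F({\dle'})=B$, let $(a_i)$ and $(a'_i)$ be the monotonic numberings of ${\dle}$ and ${\dle'}$; then $\sigma\in\Sigma_A$ with $\sigma(a_i)=a'_i$ satisfies ${\dle}\sigma={\dle'}$, because both double orders are given by the \emph{same} formula in their respective numberings, so ${\dle}\Sigma_A={\dle'}\Sigma_A$.

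\emph{Morphisms.} Fix ${\dle},{\dle'}\in R(A)$ with monotonic numberings $(a_i),(a'_i)$ and set $B=F({\dle})$, $B'=F({\dle'})$. By the description of hom-sets in a free quotient category, $\bigl((R(A),\sqsupseteq)/\Sigma_A\bigr)({\dle}\Sigma_A,{\dle'}\Sigma_A)=\coprod_{g\in\Sigma_A}(R(A),\sqsupseteq)({\dle},{\dle'} g)$, i.e.\ it is identified with $\{\,g\in\Sigma_A : {\dle}\sqsupseteq{\dle'} g\,\}$. The rule $g\mapsto\psi_g$, where $g(a_i)=a'_{\psi_g(i)}$, is a bijection $\Sigma_A\to\Sigma_n$; since the monotonic numbering of ${\dle'} g$ is $(g^{-1}(a'_i))_i$, a short computation gives $F({\dle}\sqsupseteq{\dle'} g)=\psi_g$, so $\bar F$ on this morphism set is exactly $g\mapsto\psi_g$. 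Hence it remains to prove the equivalence ${\dle}\sqsupseteq{\dle'} g\iff\psi_g\in\cE_n(B,B')$. When $B\not\supseteq B'$ both sides are empty (the right side by definition of $\cE_n$; the left because $F$ is a functor, so ${\dle}\sqsupseteq{\dle'} g$ would force $B=F({\dle})\supseteq F({\dle'} g)=B'$).

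\emph{The key lemma and the main obstacle.} Assume $B\supseteq B'$ and write $\mathrm{bl},\mathrm{bl}'$ for the block index of an element of $\{1,\dots,n\}$ relative to $B$, $B'$. Unwinding the definitions, ${\xle}\supseteq({\xle'} g)$ translates to ``$\mathrm{bl}'(\psi_g(i))<\mathrm{bl}'(\psi_g(j))\Rightarrow\mathrm{bl}(i)<\mathrm{bl}(j)$'' and ${\yle}\subseteq({\yle'} g)$ to ``$i<j$ with $\mathrm{bl}(i)=\mathrm{bl}(j)$ imply $\psi_g(i)<\psi_g(j)$ and $\mathrm{bl}'(\psi_g(i))=\mathrm{bl}'(\psi_g(j))$'', while on the $\cE_n$ side (y) is exactly ``$i<j$, $\mathrm{bl}(i)=\mathrm{bl}(j)$ imply $\psi_g(i)<\psi_g(j)$'' and (x) is equivalent to ``$\psi_g$ preserves every $B'$-block setwise'', i.e.\ $\mathrm{bl}'(\psi_g(i))=\mathrm{bl}'(i)$. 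For $\Rightarrow$: the ${\xle}$-condition says the preimages $\psi_g^{-1}(\beta)$, as $\beta$ runs over the $B'$-blocks in increasing order, appear as increasing sets; since they partition $\{1,\dots,n\}$ with the correct cardinalities they must be the $B'$-blocks themselves, which is (x), and (y) is the first half of the ${\yle}$-condition. For $\Leftarrow$: substitute (x) into the two conditions above and use that, as $B\supseteq B'$, each $B'$-block is a union of consecutive $B$-blocks. I expect this dictionary — in particular the counting argument producing (x) — to be the only step requiring care; the rest is bookkeeping. Together with the object bijection and functoriality of $\bar F$, this shows $\bar F$ is an isomorphism of categories.
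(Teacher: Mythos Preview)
Your proposal is correct and follows essentially the same approach as the paper: both establish the bijection on objects via monotonic numberings, and both reduce the morphism bijection to translating the conditions (x) and (y) of $\cE_n$ into the relation $\sqsupseteq$ on regular double orders. The only organizational difference is that the paper fixes ${\dle}$ and $\varphi\in\cE_n(B,B')$ and constructs the unique ${\dle'}$ with $F({\dle}\sqsupseteq{\dle'})=\varphi$, whereas you fix both ${\dle}$ and ${\dle'}$, identify the quotient hom-set with $\{g\in\Sigma_A:{\dle}\sqsupseteq{\dle'}g\}$, and prove the equivalence ${\dle}\sqsupseteq{\dle'}g\iff\psi_g\in\cE_n(B,B')$ directly; your counting argument deriving (x) from the ${\xle}$-condition makes the injectivity half (which the paper asserts as ``the only possible regular double order'') more explicit.
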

\begin{proof}
	Let $B=\{b_1<\dotsm<b_{r-1}\}\in\cE_n$.
	Every numbering $A=\{a_1,\dotsc,a_n\}$ is the monotonic numbering of the unique double order ${\dle}\in R(A)$,
	and all double orders ${\dle}$ such that $F({\dle})=B$ can be obtained this way.
	As a consequence, $\bar{F}$ is a bijection on objects.
	
	Now let $\varphi\in \cE_n(B,B')$, $B'=\{b'_1<\dotsm<b'_{r'}\}$, ${\dle}\in F^{-1}(B)$ and let $A=\{a_1,\dotsc,a_n\}$ be the monotonic numbering for $\dle$.
	The double order $\dle'$ given by
	\[
		a_i\xle' a_j\iff \exists p:\; \varphi(i)\leq b'_p<\varphi(j),
		\qquad
		a_i\yle' a_j\iff \exists p:\; b'_{p-1}<\varphi(i)<\varphi(j)\leq b'_p
	\]
	is the only possible regular double order such that $F({\dle})=B'$ and $F({\dle}\sqsupseteq{\dle'})=\varphi$.
	It remains 	to check that ${\dle}\sqsupseteq {\dle'}$. We have
	\[
		a_i\xle' a_j
		\iff
		\exists p: \; \varphi(i)\leq b'_p<\varphi(j)
		\overset{\text{(x)}}\implies
		\exists p:\; i\leq b'_p<j
		\overset{B\supseteq B'}\implies
		\exists q:\; i\leq b_q<j
		\iff
		a_i\xle a_j.
	\]	
	and
	\[
		a_i\yle a_j
		\iff
		\exists q:\; b_{q-1}<i<j\leq b_q
		\overset{\text{(x),(y)}}\implies
		\exists p:\; b'_{p-1}<\varphi(i)<\varphi(j) \leq b'_p
		\iff
		a_i\yle' a_j.
	\]
	Thus, $F^{-1}(\varphi)$ contains exactly one element for every $\dle\in F^{-1}(B)$, ie, it is a single orbit.
	Therefore, $\bar{F}$ is a bijection on morphisms.
\end{proof}

Combining Propositions \ref{p:RRComp}, \ref{p:RConfEq}, \ref{p:BarFIso} and Lemma \ref{l:GNerve} we obtain the following.

\begin{prp}
	For every $n\geq 0$,
	$\UConf(n,\R^2)$ is homotopy equivalent to the nerve of the category $\cE_n$.
\end{prp}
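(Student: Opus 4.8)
The plan is to concatenate the $\Sigma_A$--homotopy equivalences already established, descend to $\Sigma_A$--orbit spaces, and reinterpret the resulting quotient poset as the category $\cE_n$. Fix throughout a set $A$ with $n$ elements, so that $\UConf(n,\R^2)=\OConf(A,\R^2)/\Sigma_A$ by definition.

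First I would assemble a single $\Sigma_A$--homotopy equivalence $\OConf(A,\R^2)\simeq|(R(A),{\sqsupseteq})|$. Proposition \ref{p:RConfEq} supplies a $\Sigma_A$--homotopy equivalence $\OConf(A,\R^2)\simeq|(R^+(A),{\supseteq})|$, and Proposition \ref{p:RRComp} supplies one $|(R^+(A),{\subseteq})|\simeq|(R(A),{\sqsubseteq})|$; since the geometric realization of a poset and of its opposite are canonically homeomorphic, these compose once the bookkeeping with the $\subseteq/\supseteq$ and $\sqsubseteq/\sqsupseteq$ conventions is carried out.

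Next I would pass to orbits. The $\Sigma_A$--action on $\OConf(A,\R^2)$ is free (by precomposition) and the action on $R(A)$ is free by Proposition \ref{l:FreeActionDA}, so the induced action on the nerve $|(R(A),{\sqsupseteq})|$ is a free $\Sigma_A$--CW action; both sides therefore have the $\Sigma_A$--homotopy type of a free $\Sigma_A$--CW-complex. A $\Sigma_A$--homotopy equivalence between such spaces induces a homotopy equivalence of orbit spaces, hence
\[
	\UConf(n,\R^2)=\OConf(A,\R^2)/\Sigma_A\simeq|(R(A),{\sqsupseteq})|/\Sigma_A.
\]
Finally, Lemma \ref{l:GNerve} identifies the right-hand side with $|(R(A),{\sqsupseteq})/\Sigma_A|$, and Proposition \ref{p:BarFIso} gives an isomorphism of categories $(R(A),{\sqsupseteq})/\Sigma_A\cong\cE_n$, hence a homeomorphism of nerves. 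Stringing everything together yields $\UConf(n,\R^2)\simeq|\cE_n|$.

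The only step requiring genuine care is the descent of an equivariant homotopy equivalence to the quotient; I would justify it by the standard fact that for a free action of a finite group on a space of $G$--CW type the orbit projection behaves like a fibre bundle (equivalently, $EG\times_G(-)\to(-)/G$ is a homotopy equivalence on free $G$--CW spaces), so a $G$--homotopy equivalence descends. Everything else is formal diagram chasing, and the degenerate case $n=0$, in which each of $\OConf(A,\R^2)$, $R(A)$, and $\cE_n$ collapses to a point, can be checked directly.
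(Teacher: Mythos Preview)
Your proposal is correct and follows exactly the route the paper takes: it combines Propositions \ref{p:RConfEq} and \ref{p:RRComp} to get a $\Sigma_A$--equivalence $\OConf(A,\R^2)\simeq|(R(A),{\sqsupseteq})|$, passes to orbits, applies Lemma \ref{l:GNerve}, and then invokes the isomorphism of Proposition \ref{p:BarFIso}. The only difference is that you spell out more carefully the descent step and the poset-versus-opposite bookkeeping, which the paper leaves implicit.
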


\end{document}